\newtheorem{thm}{Theorem}[section]
\newtheorem*{thm*}{Theorem}
\newtheorem{cor}[thm]{Corollary}
\newtheorem*{cor*}{Corollary}
\newtheorem{lemma}[thm]{Lemma}
\newtheorem{prop}[thm]{Proposition}
\theoremstyle{definition}
\newtheorem{defn}[thm]{Definition}
\newtheorem{ex}[thm]{Example}
\newtheorem*{quest*}{Question}
\theoremstyle{remark}
\newtheorem{rem}[thm]{Remark}
\newtheorem*{rem*}{Remark}
\newtheorem*{org}{Organization of the article}
\newtheorem*{ack}{Acknowledgment}
\newdimen\theight
\def\TeXref#1{%
             \leavevmode\vadjust{\setbox0=\hbox{{\tt
                     \quad\quad  {\small \textrm #1}}}%
             \theight=\ht0
             \advance\theight by \lineskip
             \kern -\theight \vbox to
             \theight{\rightline{\rlap{\box0}}%
             \vss}%
             }}%
\newcommand{\ZZ}{{\mathbb  Z}}
\newcommand{\RR}{{\mathbb  R}}
\newcommand{\CC}{{\mathbb  C}}
\renewcommand{\cD}{{\mathcal D}}
\newcommand{\E}{{\mathcal E}}
\newcommand{\F}{{\mathcal F}}
\newcommand{\mfg}{\mathfrak{g}}
\newcommand{\mfh}{\mathfrak{h}}
\newcommand{\mfa}{\mathfrak{a}}
\newcommand{\mft}{\mathfrak{t}}
\newcommand{\rk}{\operatorname{rank}}
\newcommand{\id}{\operatorname{id}}
\newcommand{\odd}{\operatorname{odd}}
\newcommand{\rank}{\operatorname{Rank}}
\newcommand{\SO}{\operatorname{SO}}
\newcommand{\U}{\operatorname{U}}
\renewcommand{\S}{\operatorname{S}}
\newcommand{\Isom}{\operatorname{Isom}}
\newcommand{\End}{\operatorname{End}}
\newcommand{\delbar}{\operatorname{\overline{\partial}}}
\newcommand{\trX}{\operatorname{\widetilde{X}}}
\newcommand{\SU}{\operatorname{SU}}
\newcommand{\Image}{\operatorname{Image}}
\newcommand{\stb}{\star_b}
\newcommand{\bdel}{\overline{\partial}}
\newcommand{\AUT}{\mathfrak{Aut}}
\newcommand{\CR}{\mathfrak{CR}}
\newcommand{\Lcr}{\mathfrak{cr}}
\numberwithin{equation}{section}
\begin{document}

\title[Basic Dolbeault cohomology of Sasakian manifolds]{Rigidity and vanishing of basic Dolbeault cohomology of Sasakian manifolds}

\author{Oliver Goertsches}
\address{Oliver Goertsches, Fachbereich Mathematik, Universit\"at Hamburg, Bundesstra\ss e 55, 20146 Hamburg, Germany}
\email{oliver.goertsches@math.uni-hamburg.de}
\author{Hiraku Nozawa}
\thanks{The second author is partially supported by Research Fellowship of Canon Foundation in Europe, the EPDI/JSPS/IH\'{E}S Fellowship and the Spanish MICINN grants MTM2011-25656. This paper was written during the stay of the second author at Institut des Hautes \'{E}tudes Scientifiques (Bures-sur-Yvette, France), Institut Mittag-Leffler (Djursholm, Sweden) and Centre de Recerca Matem\`{a}tica (Bellaterra, Spain); he is very grateful for their hospitality.}
\address{Hiraku Nozawa, Centre de Recerca Matem\`{a}tica, Campus de Bellaterra, Edifici C, 08193 Bellaterra, Barcelona, Spain}
\email{nozawahiraku@06.alumni.u-tokyo.ac.jp}
\author{Dirk T\"oben}
\thanks{The third author is supported by a fellowship of CNPq-Brazil}
\address{Dirk T\"oben, Instituto de Matem\'atica e Estat\'istica,
 Universidade de S\~ao Paulo,  Rua do Mat\~ao, 1010
 S\~ao Paulo, SP 05508-090, Brazil}
\email{dtoeben@googlemail.com}

\keywords{Sasakian manifolds, Killing foliations, basic cohomology, basic Dolbeault cohomology, transverse Hodge theory, equivariant cohomology, equivariant formality}
\date{}

\subjclass[2010]{53D35, 55N25, 58A14}

\begin{abstract}
The basic Dolbeault cohomology $H^{p,q}(M,\F)$ of a Sasakian manifold $(M,\eta,g)$ is an invariant of its characteristic foliation $\F$ (the orbit foliation of the Reeb flow). We show some fundamental properties of this cohomology, which are useful for its computation. In the first part of the article, we show that the basic Hodge numbers $h^{p,q}(M,\F)$, the dimensions of $H^{p,q}(M,\F)$, only depend on the isomorphism class of the underlying CR structure. Equivalently, we show that the basic Hodge numbers are invariant under deformations of type I. This result reduces the computation of $h^{p,q}(M,\F)$ to the quasi-regular case. In the second part, we show a basic version of the Carrell-Lieberman theorem relating $H^{\bullet,\bullet}(M,\F)$ to $H^{\bullet,\bullet}(C,\F)$, where $C$ is the union of closed leaves of $\F$. As a special case, if $\F$ has only finitely many closed leaves, then we get $h^{p,q}(M,\F)=0$ for $p\neq q$. Combining the two results, we show that if $M$ admits a nowhere vanishing CR vector field with finitely many closed orbits, then $h^{p,q}(M,\F)=0$ for $p\neq q$. As an application of these results, we compute $h^{p,q}(M,\F)$ for deformations of homogeneous Sasakian manifolds.
\end{abstract}

\maketitle

\tableofcontents

\addtocontents{toc}{\protect\setcounter{tocdepth}{1}}
\section{Introduction} \label{sec-intro}

\subsection{Background}
Recently Sasakian manifolds have been studied by Einstein geometers and physicists as they provide examples of odd dimensional Einstein manifolds which appear in the AdS/CFT correspondence~\cite{Boyer Galicki 4,GMSW,BGK,MSY,van Coevering,FOW}. We refer to~\cite[Section~14.5]{Boyer Galicki} for a brief historical account of the physical background for mathematicians and a list of references in physics. The basic Dolbeault cohomology of the characteristic foliation of a Sasakian manifold is a fundamental invariant similar to the Dolbeault cohomology of a K\"{a}hler manifold. This cohomology has good properties; for example, El Kacimi-Alaoui~\cite{ElKacimi} proved basic versions of the Hodge and Lefschetz decompositions. In the quasi-regular case, where all leaves of the characteristic foliation are circles, the computation of this cohomology reduces to that of complex orbifolds, but only little is known about how to calculate it in the irregular case. One of the few examples is a Kodaira-Akizuki-Nakano-type vanishing theorem~\cite[Theorem~1.2]{Nozawa} which is valid for positive Sasakian manifolds.

In this article we will show fundamental properties of the basic Dolbeault cohomology described in the abstract, which are useful to compute the basic Hodge numbers of irregular Sasakian manifolds. We will now explain our results in more detail.

\subsection{Rigidity of the basic Hodge numbers of Sasakian manifolds}\label{sec:ridintro}
\subsubsection{A rigidity theorem}
Surprisingly, the basic Betti numbers of the characteristic foliation, i.e., the dimensions of the basic cohomology groups, are the same for any Sasakian structure on a fixed compact manifold~\cite[Theorem~7.4.14]{Boyer Galicki}, which means that they cannot distinguish Sasakian structures. In contrast, the basic Hodge numbers can, see Example~\ref{ex:basicDolbeaultdistinguishes} below. Therefore the question arises on which qualities of a Sasakian structure the latter groups depend. Our first result is the following:
\begin{thm*}
Two Sasakian structures on a compact manifold with isomorphic CR structures have the same basic Hodge numbers.
\end{thm*}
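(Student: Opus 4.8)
The plan is to reduce the rigidity statement to a deformation-invariance statement and then to analyze the basic Dolbeault complex under such deformations. First I would recall that two Sasakian structures $(\eta_0,g_0)$ and $(\eta_1,g_1)$ on $M$ with the same underlying CR structure $(D,J)$ differ only in the choice of Reeb vector field $\xi$ inside the Lie algebra of the torus acting on $M$ by CR automorphisms; the set of admissible Reeb fields is an open convex cone (the ``Sasaki cone''), so $\xi_0$ and $\xi_1$ can be joined by a smooth path $\xi_t$ of Reeb fields of Sasakian structures all compatible with the fixed CR structure. This is precisely the notion of a \emph{deformation of type I}. Thus the theorem follows once one shows that $t\mapsto h^{p,q}(M,\F_t)$ is constant along such a path, where $\F_t$ is the characteristic foliation of $\xi_t$.

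Next I would set up the comparison of the basic Dolbeault complexes for different $t$. The key point is that the CR structure $(D,J)$ is fixed, and the transverse holomorphic structure of $\F_t$ is inherited from $J|_D$; only the transversal direction — the line spanned by $\xi_t$ — varies. I would identify, for each $t$, the space of basic forms $\Omega^{p,q}(M,\F_t)$ as the $\xi_t$-horizontal and $\xi_t$-invariant $(p,q)$-forms, and exhibit an explicit isomorphism of graded vector spaces $\Omega^{\bullet,\bullet}(M,\F_0)\to\Omega^{\bullet,\bullet}(M,\F_t)$ obtained by projecting along $\xi_t$ (using $\eta_t$ as the projection), which is compatible with $\bdel_b$ up to terms controlled by the variation $\dot\xi_t$. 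An alternative, cleaner route is to use El Kacimi-Alaoui's transverse Hodge theory: each $\bdel_b$-Laplacian $\Delta_{\bdel,t}$ on $(p,q)$-forms is a transversally elliptic operator depending smoothly on $t$, so its kernel has locally constant dimension provided one can rule out jumps. Since $h^{p,q}(M,\F_t)=\dim\ker\Delta_{\bdel,t}$ and the index-type arguments give semicontinuity in both directions along a smooth family of transversally elliptic operators with the same symbol, the dimension is forced to be constant.

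I would then assemble the argument as follows: (1) show the Sasaki cone is connected and that any two CR-equivalent Sasakian structures lie in (translates of) the same cone, giving the path $\xi_t$; (2) for the deformed foliation $\F_t$, verify that the basic Dolbeault cohomology is computed by a smooth family of transversally elliptic, formally self-adjoint operators (the basic $\bdel_b$-Laplacians with respect to a smoothly varying transverse Hermitian metric), invoking El Kacimi-Alaoui for the Hodge-theoretic identification $H^{p,q}(M,\F_t)\cong\ker\Delta_{\bdel,t}$; (3) apply the continuity of the dimension of the kernel for such families to conclude $h^{p,q}(M,\F_t)$ is independent of $t$; (4) specialize to $t=0,1$.

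The main obstacle, I expect, is step (3): establishing that the dimension of $\ker\Delta_{\bdel,t}$ cannot jump. For a single transversally elliptic operator one has finite-dimensionality of the kernel, but semicontinuity of kernel dimensions along a family is automatic only in one direction; equality requires an extra input, typically either a basic elliptic complex whose \emph{cohomology} (not just each Laplacian's kernel) varies continuously — so that one can compare $\dim\ker$ from above and below using the full complex $\bdel_b+\bdel_b^*$ — or a Kodaira-Spencer-type deformation argument showing the complexes $(\Omega^{\bullet,\bullet}(M,\F_t),\bdel_b)$ are all quasi-isomorphic. I would resolve this by passing to El Kacimi-Alaoui's model of the basic complex as an honest elliptic complex on a (compact) transverse manifold or orbifold in the quasi-regular approximants, or more robustly by exhibiting a $t$-smooth family of chain isomorphisms between the basic Dolbeault complexes induced by the fact that the complex structures on $D$ agree; once such a family of quasi-isomorphisms is in hand, the rigidity of the basic Hodge numbers is immediate. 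Handling convergence and regularity of these objects uniformly in $t$ — especially in the genuinely irregular case where no transverse manifold exists — is the technical heart of the proof.
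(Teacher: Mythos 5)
Your reduction to a type-I deformation along the Sasaki cone is exactly the paper's first step, but the core of your argument -- step (3) -- has a genuine gap, and you have in fact put your finger on it yourself without resolving it. There is no $t$-smooth family of chain isomorphisms between the basic Dolbeault complexes $(\Omega^{\bullet,\bullet}(M,\F_t),\bdel_b)$: the space of basic forms depends on the foliation $\F_t$, which changes \emph{discontinuously} along the path (a quasi-regular Reeb field and a nearby irregular one in the same cone have wildly different spaces of basic functions, since basicness is invariance under the leaf closures). For the same reason, $\Delta_{\bdel,t}$ is not a smooth family of operators on a fixed bundle, and the claimed ``semicontinuity in both directions'' for families of transversally elliptic operators is not a theorem -- kernel dimension is at best upper semicontinuous, and even that is unavailable here without first repairing the domain. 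Your fallback via quasi-regular approximants and orbifold leaf spaces does not cover the irregular members of the family, which is precisely the case of interest.

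The paper resolves both problems with two ideas absent from your proposal. First, it never attempts lower semicontinuity: upper semicontinuity of each $h^{p,q}(M,\F_s)$ already forces constancy, because the sum $\sum_{p+q=k}h^{p,q}(M,\F_s)=b^k(M,\F_s)$ (El Kacimi-Alaoui's basic Hodge decomposition) is known to be independent of the Sasakian structure on a fixed compact manifold. Second, to get upper semicontinuity it replaces the basic complex by the smoothly varying bundles $E^{p,q}_s$ of \emph{transverse} forms, extends $\Delta_b$ to their sections, and passes to the strongly elliptic self-adjoint operator $D_s=\mathcal L_{\xi_s}\mathcal L_{\xi_s}-\Delta_b$, recovering $h^{p,q}(M,\F_s)=\dim(\ker D_s)^{H}$ for a compact group $H$ containing the Reeb flow; Kodaira-Spencer's spectral stability plus rigidity of representations of compact groups then gives the upper bound. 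This is also why the paper needs the auxiliary torus action along the family (Lemma~\ref{lem:cone}), and why it treats the sphere separately (Schoen's theorem fails to give compactness of $\CR(\cD,J)$ there) -- two points your outline passes over. Without the Betti-number trick and the extension to a genuinely elliptic operator on a smoothly varying bundle, your plan cannot be completed as stated.
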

As we explain in Section~\ref{sec:cons}, this allows to reduce the computation of basic Hodge numbers to the quasi-regular case.
\begin{rem*}
Here two CR structures $({\mathcal D}_{1},J_{1})$ and $({\mathcal D}_{2},J_{2})$ on a manifold $M$ are called isomorphic if there exists a diffeomorphism of $M$ that maps $({\mathcal D}_{1},J_{1})$ to $({\mathcal D}_{2},J_{2})$. Note that the statement is equivalent to the equality of the basic Hodge numbers of two Sasakian structures with the same underlying CR structure.
\end{rem*}

The above theorem can be also interpreted as the rigidity of the basic Hodge numbers under certain deformations of Sasakian manifolds: It is well-known that a Sasakian structure on a manifold $M$ is determined by the underlying CR structure and the Reeb vector field. Denoting by ${\mathcal S}(\mathcal D,J)$ the space of Sasakian structures on a compact manifold $M$ with a fixed underlying CR structure $({\mathcal D},J)$, we therefore see that an element of ${\mathcal S}(\mathcal D,J)$ can be identified with its Reeb vector field. As proven in~\cite{BGS}, see Section~\ref{sec:cone} below, this gives an isomorphism between ${\mathcal S}(\mathcal D,J)$ and an open cone in the Lie algebra of the CR diffeomorphism group of $({\mathcal D},J)$. A {\em deformation of type I}~\cite{GauOr,Belgun} (see also~\cite[Section~8.2.3]{Boyer Galicki}) is a deformation of Sasakian structures inside ${\mathcal S}(\mathcal D,J)$. Since any two Sasakian structures in ${\mathcal S}(\mathcal D,J)$ can be connected by a deformation of type I, the above theorem is equivalent to the following rigidity statement.
\begin{thm*}
The basic Hodge numbers of a Sasakian structure on a compact manifold are invariant under deformations of type I.
\end{thm*}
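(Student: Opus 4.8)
The plan is to establish the rigidity statement by showing that the basic Dolbeault cohomology is, in a suitable sense, an invariant of the underlying CR structure together with the transverse holomorphic structure, and that the latter does not change along a deformation of type I. Since a deformation of type I moves only the Reeb vector field inside the fixed CR structure $(\mathcal D, J)$, and any two such Reeb fields are connected by a path in the open cone described above, it suffices to prove the local statement: if $\xi_t$ is a smooth family of Reeb vector fields in $\mathcal{S}(\mathcal D, J)$, then $h^{p,q}(M,\mathcal F_t)$ is independent of $t$. Here $\mathcal F_t$ is the characteristic foliation of the Reeb flow of $\xi_t$.

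First I would observe that all the foliations $\mathcal F_t$ share the same ambient complex structure on the "transverse" data: the subbundle $\mathcal D$ with its complex structure $J$ is fixed, and the leafwise direction $\RR\xi_t$ varies within the contact-type cone. Consequently the transverse holomorphic structures on the $\mathcal F_t$ are all pulled back from the same CR data, and the complexified normal bundles $\nu(\mathcal F_t)\otimes\CC$ decompose compatibly with $J$. The key technical point is to set up a single complex of sheaves (or a single elliptic complex, after choosing an auxiliary transverse Hermitian metric adapted to $\xi_t$) computing $H^{p,q}(M,\mathcal F_t)$, and to show this complex varies continuously — indeed smoothly — in $t$. The natural tool is El Kacimi-Alaoui's transverse Hodge theory: for each $t$ one has a transversely elliptic operator $\Box_{\bas,t}$ on basic forms whose kernel is $H^{p,q}(M,\mathcal F_t)$, and one wants to apply the standard semicontinuity/stability argument for families of elliptic operators to conclude that $\dim\ker\Box_{\bas,t}$ is locally constant.

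The main obstacle — and the reason this is not simply the classical Kodaira–Spencer stability argument — is that the \emph{space of basic forms itself depends on $t$}: a form basic for $\mathcal F_t$ need not be basic for $\mathcal F_s$, since $\iota_{\xi_t}$ and $\mathcal L_{\xi_t}$ change with $t$. So one cannot directly view $\{\Box_{\bas,t}\}$ as a family of operators on a fixed Fréchet space. My plan to circumvent this is to trivialize the family: construct a smooth family of diffeomorphisms or, better, a smooth family of bundle isomorphisms identifying the basic de Rham (resp. Dolbeault) complexes of $\mathcal F_t$ with those of $\mathcal F_0$. One route is to use the fact that $\xi_t - \xi_0$ lies in the CR algebra and integrate an appropriate time-dependent flow; another, cleaner route is to exploit that the leaf spaces, though only orbifolds in the quasi-regular case, carry compatible transverse structures, and to build the comparison at the level of the basic complexes directly from the common CR data (the Levi form and $J$ on $\mathcal D$). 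Once the complexes are identified as topological vector spaces with $t$-dependent differentials and $t$-dependent adapted inner products, the differentials and their formal adjoints depend smoothly on $t$, the relevant Laplacians form a smooth family of transversely elliptic operators with $t$-independent symbol up to isomorphism, and the classical upper-semicontinuity of $t\mapsto\dim\ker\Box_{\bas,t}$ applies. Combined with the Euler-characteristic (or total-dimension) constraint coming from the basic Hodge decomposition and basic Serre duality, upper semicontinuity of each $h^{p,q}$ forces all of them to be constant, which is the assertion.

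A subtle point worth isolating, and which I would treat carefully rather than wave away, is transverse ellipticity and the need for the transverse Hermitian metric to be \emph{bundle-like} and quasi-regular-compatible in a way that survives the deformation; one must check that the adapted metrics can be chosen to form a smooth family, and that the foliations $\mathcal F_t$ are all Killing (indeed Riemannian) foliations so that El Kacimi-Alaoui's Hodge theory is available uniformly in $t$. For Sasakian $\mathcal F_t$ this Riemannian condition is automatic, and the smoothness of the family of adapted metrics can be arranged by averaging. With these ingredients in place, the proof reduces to the by-now standard deformation-invariance mechanism, the only genuinely new work being the trivialization of the basic complexes over the parameter interval.
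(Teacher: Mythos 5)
Your reduction of the statement to a smooth path of Reeb fields in the cone $\mathfrak{cr}^+(\mathcal D,J)$, and your strategy of proving upper semicontinuity of $h^{p,q}(M,\mathcal F_t)$ and then forcing equality via the constancy of the basic Betti numbers together with El Kacimi-Alaoui's decomposition $b^k=\sum_{p+q=k}h^{p,q}$, both match the paper. You also correctly isolate the central obstacle: the basic complex depends on $t$. The gap is in your proposed remedy. There is no smooth family of isomorphisms identifying the basic Dolbeault complexes of $\mathcal F_t$ with that of $\mathcal F_0$, and no diffeomorphism of $M$ carrying $\mathcal F_0$ to $\mathcal F_t$: a basic form for $\mathcal F_t$ is a section of $E^{p,q}_t$ invariant under the \emph{closure} $T_t$ of the Reeb flow, and $\dim T_t$ jumps as $t$ varies (quasi-regular parameters, where $\dim T_t=1$, are dense in the cone, while a generic nearby parameter is irregular with $\dim T_t>1$). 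For arbitrarily close parameters one basic complex is a proper closed subspace of (a copy of) the other, and the foliations themselves are not even topologically conjugate (all leaves closed versus leaves with nontrivial closures), so integrating a time-dependent flow built from $\xi_t-\xi_0$ cannot produce the desired identification. Consequently the ``classical upper-semicontinuity for a smooth family of transversely elliptic operators on a fixed space'' that your argument invokes is exactly the step that is not available.

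The paper circumvents this differently: it abandons the basic complex and works on the full space of transverse forms $C^\infty(E^{p,q}_s)$ with no invariance imposed, which genuinely is a smooth family of section spaces of Hermitian bundles. It extends $\overline\partial_b$, $\overline\partial_b^*$ and hence $\Delta_b$ to this space and replaces the merely transversely elliptic $\Delta_b$ by the strongly elliptic, self-adjoint operator $D_s=\mathcal L_{\xi_s}\mathcal L_{\xi_s}-\Delta_b$, for which $h^{p,q}(M,\mathcal F_s)=\dim(\ker D_s)^{T}$, where $T$ is a single torus acting compatibly for all $s$ (this is why the paper first proves a lemma producing such a $T$ from compactness of the CR automorphism group, treating cohomology spheres separately). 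Kodaira-Spencer theory then yields a smoothly varying finite-dimensional $T$-invariant space $\mathcal E_s\supseteq\ker D_s$ with $\mathcal E_{s_0}=\ker D_{s_0}$, and rigidity of finite-dimensional representations of the compact group $T$ gives constancy of $\dim\mathcal E_s^{T}$, hence the desired upper semicontinuity. If you want to salvage your write-up, this enlargement-plus-invariants step is the missing idea you need to supply in place of the trivialization.
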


\begin{rem*}
The basic Hodge numbers are also invariant under deformations of type II (\cite[Definition~7.5.9]{Boyer Galicki}), because under such deformations the characteristic foliation and the transverse holomorphic structure do not change.
\end{rem*}

\begin{rem*}
We actually prove a stronger result, namely Theorem~\ref{thm:Invariance2}: the basic Hodge numbers are constant for a smooth family of Sasakian structures, if there exists a smooth family of actions of a compact Lie group, such that for each parameter value it preserves the CR structure and contains the Reeb flow.
\end{rem*}

To prove the rigidity theorem, we want to apply Kodaira-Spencer theory to the family of basic Laplacians. Unfortunately, we cannot use this theory directly, because the basic de Rham complex changes discontinuously. To avoid this difficulty, we consider transverse forms, which are sections of exterior products of the complexified conormal bundles of the characteristic foliations. Extending the basic Laplacians to strongly elliptic operators acting on them in a way motivated by a construction of El Kacimi-Alaoui and Hector~\cite{ElKacimiHector,ElKacimi}, Kodaira-Spencer theory can be applied to show the constancy of the basic Hodge numbers; here an additional argument is needed to go back from the level of transverse forms to that of basic forms.

We wonder if the rigidity holds more generally:
\begin{quest*}
Are the basic Hodge numbers of Sasakian manifolds invariant under general smooth deformations ?
\end{quest*}
For a description of general deformations of characteristic foliations, we refer to~\cite[Section~8.2]{Boyer Galicki} and references therein. A family of transversely Hermitian foliations whose basic Hodge numbers change can be obtained by taking the product with $S^{1}$ of a family of Riemannian foliations in~\cite[Example~7.4]{Nozawa 2}. But we do not know if the basic Hodge numbers of transversely K\"{a}hler foliations can change. K\"{a}hler manifolds with holomorphic vector fields have certain rigidity as shown in~\cite{AMN}, which makes the construction of counterexamples difficult.

\subsubsection{Corollaries of the rigidity theorem}\label{sec:cons}
The rigidity result in the last section is useful for computations. It is well-known that the subset of quasi-regular Sasakian structures (i.e., whose Reeb orbits are closed) is dense in ${\mathcal S}(\mathcal D,J)$. Note that the basic Dolbeault cohomology of a quasi-regular Sasakian structure is the Dolbeault cohomology of its leaf space as a direct consequence of the definition. Since the leaf space is a complex projective orbifold, we get the following corollary:
\begin{cor*}
The basic Hodge numbers of a compact Sasakian manifold are the Hodge numbers of a complex projective orbifold.
\end{cor*}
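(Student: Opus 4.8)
The plan is to combine the rigidity theorem with the classical structure theory of quasi-regular Sasakian manifolds. Let $(M,\eta,g)$ be a compact Sasakian manifold with underlying CR structure $(\mathcal D,J)$ and characteristic foliation $\F$. Recall from the discussion above that the space $\mathcal S(\mathcal D,J)$ of Sasakian structures with fixed underlying CR structure is an open cone in the Lie algebra of the CR diffeomorphism group, that the quasi-regular structures form a dense subset of $\mathcal S(\mathcal D,J)$, and that any two elements of $\mathcal S(\mathcal D,J)$ are connected by a deformation of type I.

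First I would pick, using density, a quasi-regular Sasakian structure $(\eta',g')\in\mathcal S(\mathcal D,J)$; its characteristic foliation $\F'$ has all leaves circles. Since $(\eta,g)$ and $(\eta',g')$ lie in the same $\mathcal S(\mathcal D,J)$, they are connected by a deformation of type I, so the rigidity theorem gives $h^{p,q}(M,\F)=h^{p,q}(M,\F')$ for all $p,q$. Next I would identify $h^{p,q}(M,\F')$ with the Hodge numbers of the leaf space $X:=M/\F'$: because $\F'$ is quasi-regular, $X$ is a compact complex orbifold, the transverse holomorphic structure of $\F'$ descends to the complex structure of $X$, and (directly from the definitions, as noted in the text) basic forms on $M$ correspond to orbifold forms on $X$, so $H^{p,q}(M,\F')\cong H^{p,q}(X)$.

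Finally I would check that $X$ is projective. The transverse K\"ahler form $\tfrac12\,d\eta'$ is basic and descends to a K\"ahler form on $X$ whose cohomology class is, up to a positive scalar, the Euler class of the orbifold circle bundle $M\to X$; in particular it is a rational, hence after rescaling integral, K\"ahler class on the compact complex orbifold $X$. By Baily's orbifold version of the Kodaira embedding theorem, such an $X$ embeds holomorphically into some $\PP^N$, i.e.\ $X$ is a complex projective orbifold. Combining the three steps, $h^{p,q}(M,\F)=h^{p,q}(X)$ with $X$ a complex projective orbifold, as claimed.

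The only input beyond the rigidity theorem that is not purely definitional is the projectivity of $X$, and this is classical; I therefore expect the write-up to be short, the main point being the correct bookkeeping of which facts (density of quasi-regular structures, connectedness of $\mathcal S(\mathcal D,J)$, the identification of basic Dolbeault cohomology with orbifold Dolbeault cohomology) are already available.
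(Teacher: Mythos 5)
Your proposal is correct and follows essentially the same route as the paper: the paper likewise deduces the corollary from the density of quasi-regular structures in $\mathcal S(\mathcal D,J)$, the type-I rigidity theorem, and the definitional identification of basic Dolbeault cohomology of a quasi-regular structure with the Dolbeault cohomology of its leaf space, which is a complex projective orbifold. The only difference is that the paper simply asserts projectivity of the leaf space as well known, whereas you fill in the standard argument via the integrality of the descended transverse K\"ahler class and Baily's orbifold Kodaira embedding theorem.
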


The positivity of Sasakian structures (see Definition~\ref{def:pos}) is preserved under small deformations of type I, because it is an open condition. By the invariance of basic Hodge numbers and the Kodaira-Akizuki-Nakano-type vanishing theorem for quasi-regular Sasakian manifolds~\cite[Proposition~2.4]{Boyer Galicki Nakamaye2}, we get another proof of \cite[Theorem~1.2]{Nozawa}:
\begin{cor*}
The $(p,0)$-th and $(0,q)$-th basic Hodge numbers of a positive compact Sasakian manifold are zero for $p>0$ and $q>0$.
\end{cor*}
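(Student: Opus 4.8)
The plan is to reduce the positive compact Sasakian manifold $(M,\eta,g)$ to a nearby quasi-regular one and then quote the already-known orbifold vanishing theorem. Fix the underlying CR structure $(\cD,J)$ and recall that the space $\mathcal{S}(\cD,J)$ of Sasakian structures with this CR structure is, via the Reeb vector field, an open cone in the Lie algebra of the CR automorphism group; in particular it is convex, hence path-connected, and any two of its points are joined by a deformation of type~I. First I would record the two inputs already noted in the introduction: positivity (Definition~\ref{def:pos}) is an open condition on $\mathcal{S}(\cD,J)$, since it asks that the basic first Chern class lie in the (open) basic K\"ahler cone and this class varies continuously with the Reeb vector field; and the quasi-regular Sasakian structures are dense in $\mathcal{S}(\cD,J)$.

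Combining openness with density, there is a quasi-regular Sasakian structure $(\eta',g')\in\mathcal{S}(\cD,J)$, with characteristic foliation $\F'$, lying close enough to $(\eta,g)$ to still be positive. Since $(\eta,g)$ and $(\eta',g')$ are connected by a deformation of type~I, the rigidity theorem — more precisely Theorem~\ref{thm:Invariance2} — gives $h^{p,q}(M,\F)=h^{p,q}(M,\F')$ for all $p,q$.

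It then remains to treat the quasi-regular case. The leaf space of $(\eta',g')$ is a positive complex projective orbifold, and by definition of the basic Dolbeault cohomology in the quasi-regular case its Dolbeault cohomology is precisely $H^{p,q}(M,\F')$. The Kodaira--Akizuki--Nakano-type vanishing theorem for quasi-regular positive Sasakian manifolds, \cite[Proposition~2.4]{Boyer Galicki Nakamaye2}, therefore yields $h^{p,0}(M,\F')=h^{0,q}(M,\F')=0$ for $p,q>0$, and the previous paragraph transports this back to $(M,\F)$.

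I do not expect a genuine obstacle here: the statement is a formal consequence of the rigidity theorem together with the density of quasi-regular structures and the orbifold vanishing theorem. The only point deserving explicit attention is the verification that positivity is open on $\mathcal{S}(\cD,J)$ and is inherited by the approximating quasi-regular structure; once that continuity statement is in hand, the rest of the argument is routine assembly.
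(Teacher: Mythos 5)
Your proposal is correct and follows essentially the same route as the paper: openness of positivity under deformations of type~I, density of quasi-regular structures in ${\mathcal S}(\mathcal D,J)$, the Kodaira--Akizuki--Nakano-type vanishing for quasi-regular positive Sasakian manifolds \cite[Proposition~2.4]{Boyer Galicki Nakamaye2}, and transport back via the rigidity theorem. The only cosmetic difference is that you invoke Theorem~\ref{thm:Invariance2} where the direct reference is Theorem~\ref{thm:invtypeI} (equivalently Theorem~\ref{thm:defI}); the argument is otherwise the paper's own.
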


\subsubsection{Deformation of basic Hodge decompositions}
Our proof of the rigidity theorem in Section~\ref{sec:ridintro} also gives the following result:
\begin{thm*}
Within the space ${\mathcal S}(\mathcal D,J)$ of Sasakian structures with the same CR structure, the Hodge decomposition $H^k(M,\F)=\bigoplus_{p+q=k}H^{p,q}(M,\F)$ smoothly depends on the Reeb vector field defining the foliation.
\end{thm*}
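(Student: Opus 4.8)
The plan is to re-examine the proof of the rigidity theorem while keeping track of the bidegree throughout. Fix the CR structure $(\mathcal D,J)$. For a Sasakian structure in $\mathcal S(\mathcal D,J)$ with Reeb field $\xi$, the contact form $\eta_\xi$ is determined by $\eta_\xi|_{\mathcal D}=0$ and $\eta_\xi(\xi)=1$, and the splitting $TM=\RR\xi\oplus\mathcal D$ identifies the conormal bundle of $\F_\xi$ with the \emph{fixed} complex bundle $\mathcal D^*$, carrying $J^*$ as transverse complex structure. Hence the graded space of $\CC$-valued transverse forms, together with its bidegree splitting $\bigoplus_{p,q}\Gamma(\Lambda^{p,q}\mathcal D^*\otimes\CC)$, is a single object independent of $\xi$; only the transverse metric $g_\xi|_{\mathcal D}$, built from $d\eta_\xi$ and $J$, varies with $\xi$, and it does so smoothly (positivity being exactly the open-cone condition of~\cite{BGS}). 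Consequently the extended basic Laplacians used in the proof of the rigidity theorem (following~\cite{ElKacimiHector,ElKacimi}) form a smooth family $\widetilde\Delta_\xi$ of strongly elliptic operators on this fixed bundle. Since they are assembled from the transverse $\bdel$- and $\partial$-Laplacians together with an elliptic completion acting along $\RR\xi$, they preserve the bidegree, and via the transverse Kähler identities — valid for each $\xi$ — their degree-$k$ kernel decomposes as $\bigoplus_{p+q=k}\cH^{p,q}_\xi$, where $\cH^{p,q}_\xi:=\ker\widetilde\Delta_\xi\cap\Gamma(\Lambda^{p,q}\mathcal D^*\otimes\CC)$.

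The proof of Theorem~\ref{thm:Invariance2} already shows — via Kodaira--Spencer theory applied to the restriction of $\widetilde\Delta_\xi$ to $\Gamma(\Lambda^{p,q}\mathcal D^*\otimes\CC)$, whose kernel has the constant dimension $h^{p,q}(M,\F_\xi)$ — that the spaces $\cH^{p,q}_\xi$ fit into a smooth finite-rank vector bundle over $\mathcal S(\mathcal D,J)$ and that the $L^2$-orthogonal projections onto them depend smoothly on $\xi$. Summing over $p+q=k$, the harmonic bundle $\{\cH^k_\xi\}$ is smooth and carries a smooth decomposition into the subbundles $\{\cH^{p,q}_\xi\}$: this decomposition is nothing but the restriction of the $\xi$-independent bidegree splitting of the fixed bundle $\Lambda^\bullet\mathcal D^*\otimes\CC$. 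It then remains to transport this picture to cohomology through the ``return to basic forms'' step of the rigidity proof, which supplies for each $\xi$ isomorphisms $\cH^k_\xi\xrightarrow{\sim}H^k(M,\F_\xi)$ and $\cH^{p,q}_\xi\xrightarrow{\sim}H^{p,q}(M,\F_\xi)$ compatible with the inclusions into degree $k$. Pushing the smooth bundle structure of $\{\cH^k_\xi\}$ forward along these maps makes $\{H^k(M,\F_\xi)\}_{\xi\in\mathcal S(\mathcal D,J)}$ a smooth vector bundle in which each $\{H^{p,q}(M,\F_\xi)\}$ is a smooth subbundle, which is the assertion of the theorem.

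The main obstacle is precisely this last transport step. In the rigidity proof the identification of transverse harmonic forms with basic cohomology need only be a fibrewise linear isomorphism of the correct dimension; here it must additionally vary smoothly in $\xi$ and intertwine the two bidegree decompositions. Concretely, one must verify that the El Kacimi-Alaoui--Hector comparison between transverse forms and genuine basic forms — the projection onto basic forms and the homotopy realizing the corresponding quasi-isomorphism — is bidegree-preserving and depends smoothly on the transverse data. Granting the transverse Kähler identities for the whole family, this should follow from the same elliptic-regularity and continuity estimates already employed for the rigidity theorem, now carried out separately in each bidegree $(p,q)$.
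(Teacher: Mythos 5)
Your overall strategy coincides with the paper's: work on the bundle of transverse $(p,q)$-forms (your identification of $\nu^*\F_\xi$ with the fixed bundle $\cD^*$ is a slightly cleaner packaging of the paper's smooth family $E^{p,q}_s$), extend the basic Laplacian to the strongly elliptic operator $D=\mathcal L_\xi\mathcal L_\xi-\Delta_b$, apply Kodaira--Spencer theory to get a smooth family of finite-dimensional eigenspace sums, and feed in the rigidity theorem. The place where you stop short, however, is not where the actual difficulty lies. The ``transport to cohomology'' that you single out as the main obstacle is immediate: because $\mathcal L_\xi$ is skew-adjoint and $\Delta_b$ is nonnegative, $\langle D\alpha,\alpha\rangle=-\|\mathcal L_\xi\alpha\|^2-\|\bdel\alpha\|^2-\|\bdel^*\alpha\|^2$, so any $\alpha\in\ker D$ satisfies $\mathcal L_\xi\alpha=0$ and is therefore invariant under the closure of the Reeb flow, i.e.\ it is already a genuine basic harmonic $(p,q)$-form. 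There is no El Kacimi-Alaoui--Hector projection onto basic forms and no homotopy whose smoothness or bidegree-compatibility needs to be verified: $(\ker D_s)^H$ \emph{is} the harmonic space $\cH^{p,q}(M,\F_s)\subset\Omega^\bullet(M)$, and the precise content of the theorem (as stated after it in the introduction) is exactly that this family of subspaces of $\Omega^\bullet(M)$, together with its bidegree splitting, varies smoothly; the identification with $H^{p,q}(M,\F_s)$ is the fibrewise Hodge isomorphism of Theorem~\ref{thm:EKA} for each fixed $s$ and involves no comparison across parameters.

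Conversely, the step you pass over too quickly is the one that actually carries the proof. Kodaira--Spencer theory produces a smooth bundle only out of the low-eigenvalue spaces $\E_s$ of Lemma~\ref{lem:E_s}; a priori one has only the inclusion $(\ker D_s)^H\subset\E_s^H$ of~\eqref{eq:ats}, i.e.\ upper semicontinuity, and the kernel itself could jump down. Your parenthetical ``whose kernel has the constant dimension $h^{p,q}(M,\F_\xi)$'' silently invokes the conclusion of Theorem~\ref{thm:Invariance3}; the correct logical order is: first prove rigidity, then deduce equality in~\eqref{eq:ats}, and only then does the harmonic space (rather than its eigenspace approximation) inherit the smooth bundle structure, yielding a smooth trivialization of $\E^H\to[0,1]$ and hence a smooth map to $\Omega^\bullet(M)$. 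With that reordering, and with the spurious final obstacle removed, your argument is the paper's proof.
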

More precisely, this means that the family of harmonic spaces and its decomposition $\mathcal H^k(M,\F)=\bigoplus_{p+q=k}\mathcal H^{p,q}(M,\F)$ is differentiable in $\Omega^\bullet(M)$.

\begin{rem*}
Note that the basic cohomology of the characteristic foliation does not admit a Hodge structure in the classical sense, because it does not have any natural integer lattice. One may consider the embedding $H^k(M,\F) \to H^{k}(M;\RR)$, but the intersection $H^k(M,\F) \cap H^{k}(M;\ZZ)$ may be trivial.
\end{rem*}

\subsection{A Carrell-Lieberman-type vanishing theorem}\label{sec:CLintro}

\subsubsection{Localization of the basic cohomology}
For a Sasakian manifold $(M,\eta,g)$, or more generally a $K$-contact manifold, with characteristic foliation $\F$, the basic cohomology of $(M,\F)$ is localized at the union $C$ of closed leaves of $\F$~\cite[Theorem~7.11]{GNT}; namely, $\sum_k b^k(M,\F)=\sum_k b^k(C,\F)$, where $b^{k}$ denotes the basic Betti numbers. This is deduced from a Borel localization theorem~\cite{GT2010} for more general Riemannian foliations, called Killing foliations (see Section~\ref{sec:traction} for the definition). This gives us another strategy to compute the basic cohomology, namely by deforming the Sasakian structure so that the union of closed leaves of the characteristic foliation is as simple as possible. In the second part of the article, we refine this argument to basic Dolbeault cohomology to prove a basic version of the classical Carrell-Lieberman theorem~\cite{CarrellLieberman} for Lie group actions on K\"{a}hler manifolds.

\subsubsection{Statement of the theorem}\label{sec:stCL}
We will show a Carrell-Lieberman-type theorem for general transversely K\"ahler Killing foliations on compact manifolds such that the transverse action of their Molino sheaf is equivariantly formal, see Theorem~\ref{thm:carlieb}. Since the characteristic foliation of a Sasakian manifold is equivariantly formal by~\cite[Theorem~6.8]{GNT}, we get the following:
\begin{thm*}
For a Sasakian structure on a compact manifold $M$ with characteristic foliation $\F$, let $C$ be the union of the closed leaves of $\F$ and $C/\F$ the leaf space of $(C,\F)$, which admits naturally the structure of a K\"{a}hler orbifold. We have
\[
\sum_p h^{p,p+s}(M,\F)=\sum_p h^{p,p+s}(C,\F)=\sum_p h^{p,p+s}(C/\F)
\]
for all $s$. In particular, $h^{p,p+s}(M,\F)=0$ for $|s|>\dim_\CC C/\F$.
\end{thm*}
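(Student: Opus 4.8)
The plan is to obtain the two displayed equalities from the general Carrell--Lieberman-type theorem for transversely K\"ahler Killing foliations, Theorem~\ref{thm:carlieb}, together with the elementary identification of the basic Dolbeault cohomology of a foliation with closed leaves with orbifold Dolbeault cohomology. First I would recall that for a compact Sasakian manifold $(M,\eta,g)$ the characteristic foliation $\F$ is a Killing foliation, that $d\eta$ and $J$ induce a transverse K\"ahler structure on $\F$, and that by \cite[Theorem~6.8]{GNT} the transverse action of the Molino sheaf of $\F$ is equivariantly formal. Thus $\F$ satisfies the hypotheses of Theorem~\ref{thm:carlieb}; since the union $C$ of the closed leaves of $\F$ is precisely the fixed-point set of the transverse action of the Molino sheaf, that theorem yields $\sum_p h^{p,p+s}(M,\F)=\sum_p h^{p,p+s}(C,\F)$ for every $s$.

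For the second equality I would use that $\F$ restricts to $C$ as a foliation all of whose leaves are closed. Each component of $C$ is then a closed submanifold on which $\F$ has only closed (circle) leaves, with leaf space a compact K\"ahler orbifold, the orbifold appearing in the statement; $C/\F$ is the disjoint union of these. Directly from the definition of basic Dolbeault cohomology in this quasi-regular situation (cf.\ the discussion preceding the corollaries in Section~\ref{sec:cons}), $H^{p,q}(C,\F)$ is the orbifold Dolbeault cohomology $H^{p,q}(C/\F)$ for all $p,q$, so $\sum_p h^{p,p+s}(C,\F)=\sum_p h^{p,p+s}(C/\F)$. The ``in particular'' assertion is then immediate: on a compact K\"ahler orbifold of complex dimension $n$ one has $h^{p,q}=0$ unless $0\le p\le n$ and $0\le q\le n$, so if $|s|>\dim_\CC C/\F$ (the maximum of the dimensions of the components of $C/\F$) then $h^{p,p+s}(C/\F)=0$ for every $p$; hence $\sum_p h^{p,p+s}(M,\F)=0$, and as each basic Hodge number is nonnegative, $h^{p,p+s}(M,\F)=0$.

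It remains to indicate how I would prove Theorem~\ref{thm:carlieb}. Imitating the argument of Carrell and Lieberman \cite{CarrellLieberman} for compact K\"ahler manifolds with a holomorphic vector field, the point is to exploit, for a transverse vector field $X$ generating a dense subgroup of the transverse action of the Molino sheaf (which acts by transverse holomorphic transformations), that the contraction $\iota_X$ has bidegree $(-1,0)$ and anticommutes with the basic operator $\bdel$, so that $D=\bdel+\iota_X$ is a differential on $\bigoplus_{p,q}\Omega^{p,q}(M,\F)$ of degree $+1$ for the grading by $q-p$; the natural framework is the basic equivariant Dolbeault cohomology $H^{\bullet,\bullet}_{\mfa}(M,\F)$ of the transverse action, a module over $\RR[\mfa^*]$ graded by $q-p$. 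Using El Kacimi-Alaoui's transverse $\partial\bdel$-lemma I would upgrade the equivariant formality of \cite{GNT} from the basic de Rham to this Dolbeault setting, showing that $H^{\bullet,\bullet}_{\mfa}(M,\F)$ is free over $\RR[\mfa^*]$ with rank in degree $s$ equal to $\sum_p h^{p,p+s}(M,\F)$. The Borel-type localization theorem of \cite{GT2010} then identifies $H^{\bullet,\bullet}_{\mfa}(M,\F)$ and $H^{\bullet,\bullet}_{\mfa}(C,\F)$ after inverting the nonzero elements of $\RR[\mfa^*]$, and since the transverse action is trivial on the union $C$ of closed leaves, the rank of the latter in degree $s$ is $\sum_p h^{p,p+s}(C,\F)$; comparing ranks yields the first equality.

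I expect the main obstacle to be this transfer of equivariant formality and Borel localization from the basic de Rham cohomology, where they are established \cite{GNT,GT2010}, to the Dolbeault-graded setting: carrying out the transverse $\partial\bdel$-lemma argument in the equivariant Cartan complex and tracking the $q-p$-grading throughout the localization, so that exactly the anti-diagonal sums $\sum_p h^{p,p+s}$ --- and not the individual basic Hodge numbers --- get matched. A secondary technical point, which does not arise in the classical Carrell--Lieberman theorem, is that $C$ is a disjoint union of closed submanifolds of possibly different dimensions; one must check componentwise that the transverse K\"ahler structure descends to a genuine K\"ahler orbifold structure on $C/\F$ and perform all the identifications component by component.
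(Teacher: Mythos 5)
Your proposal is correct and follows essentially the same route as the paper: reduce to the Carrell--Lieberman-type theorem for transversely K\"ahler Killing foliations via the equivariant formality of the structural Killing algebra action from \cite{GNT}, prove that theorem by upgrading equivariant formality to the equivariant basic Dolbeault setting and then applying Borel localization from \cite{GT2010} to compare ranks of the $q-p$ diagonals, and identify $H^{p,q}(C,\F)$ with $H^{p,q}(C/\F)$ directly from the definitions. The only cosmetic difference is that the paper establishes Dolbeault equivariant formality by collapsing the equivariant Fr\"olicher spectral sequence against El Kacimi-Alaoui's basic Hodge decomposition rather than invoking a transverse $\partial\bdel$-lemma explicitly, which amounts to the same ingredient.
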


This theorem is proved by an adaption of a new proof of the Carrell-Lieberman theorem due to Carrell, Kaveh and Puppe~\cite{CKP}, based on equivariant Dolbeault cohomology, to the basic setting by introducing a notion of equivariant basic Dolbeault cohomology.

\subsubsection{Corollaries of the Carrell-Lieberman-type theorem}\label{sec:corCL}
The Carrell-Lieberman-type theorem in the last section implies the following vanishing statement:
\begin{thm*}
If the characteristic foliation $\F$ of a Sasakian structure on a compact manifold $M$ has only finitely many closed leaves, then $h^{p,q}(M,\F)=0$ for $p\neq q$.
\end{thm*}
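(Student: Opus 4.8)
The plan is to obtain this as an immediate consequence of the Carrell--Lieberman-type theorem stated above. The first step is to describe the union $C$ of closed leaves: a closed leaf of the characteristic foliation $\F$ is a closed orbit of the Reeb flow, hence a circle, so if $\F$ has only finitely many closed leaves then $C$ is a finite disjoint union of circles and its leaf space $C/\F$ is a finite set of points, that is, a $0$-dimensional K\"{a}hler orbifold. In particular $\dim_\CC C/\F=0$. The second step is to feed this into the theorem: its concluding assertion gives $h^{p,p+s}(M,\F)=0$ whenever $|s|>\dim_\CC C/\F=0$, i.e.\ whenever $s\neq 0$. Since basic Hodge numbers are non-negative, vanishing of the sum $\sum_p h^{p,p+s}(M,\F)$ forces each summand to vanish, so $h^{p,p+s}(M,\F)=0$ for all $p$ and all $s\neq 0$; writing $q=p+s$, this is exactly the assertion that $h^{p,q}(M,\F)=0$ for $p\neq q$.

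Equivalently, and to make transparent that no K\"{a}hler-orbifold input beyond the dimension count is used, one can instead invoke the middle equality $\sum_p h^{p,p+s}(M,\F)=\sum_p h^{p,p+s}(C,\F)$ of the theorem together with the elementary computation of the basic Dolbeault cohomology of a single circle $L$ viewed as one leaf of $\F$: its leaf space is a point, so $h^{p,q}(L,\F)$ equals $1$ for $(p,q)=(0,0)$ and $0$ otherwise. Hence $\sum_p h^{p,p+s}(C,\F)$ counts the closed leaves when $s=0$ and vanishes for $s\neq 0$, which again yields the claim. There is essentially no obstacle in this corollary: the entire content lies in the Carrell--Lieberman-type theorem, and the only thing to check is the trivial fact that finiteness of the set of closed leaves makes $C/\F$ zero-dimensional.
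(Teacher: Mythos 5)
Your proposal is correct and follows essentially the same route as the paper: the paper obtains this theorem as an immediate consequence of its Carrell--Lieberman-type theorem (Theorem~\ref{thm:carlieb}), using that the structural Killing algebra action is equivariantly formal for Sasakian manifolds and that finitely many closed leaves forces $\dim_\CC C/\F=0$. Your dimension count and the observation that the closed leaves are circles are exactly the (implicit) content of the paper's one-line deduction.
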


A vector field $X$ on a CR manifold is called {\em CR} if the flow generated by $X$ preserves the CR structure (note that some authors use this terminology in a different sense). In combination with the rigidity theorem in Section~\ref{sec:ridintro} we obtain:
\begin{thm*}
Let $(M,\eta,g)$ be a compact Sasakian manifold with characteristic foliation $\F$. If there exists a nowhere vanishing CR vector field on $M$ with finitely many closed orbits, then $h^{p,q}(M,\F)=0$ for $p\neq q$.
\end{thm*}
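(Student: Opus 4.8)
The plan is to combine the rigidity theorem with the vanishing theorem for Sasakian structures whose characteristic foliation has finitely many closed leaves. The key observation is that a nowhere vanishing CR vector field $X$ on $M$ generates a one-parameter group of CR diffeomorphisms, and hence, after rescaling, produces a new Reeb vector field on the same CR structure $(\mathcal D, J)$ whose orbit foliation is exactly the orbit foliation of $X$. More precisely, first I would recall from~\cite{BGS} (as described in Section~\ref{sec:cone}) that $\mathcal S(\mathcal D, J)$ is identified with an open cone in the Lie algebra of the CR automorphism group of $(\mathcal D, J)$; the given Reeb vector field $\xi$ lies in this cone. The vector field $X$ also lies in the Lie algebra of CR automorphisms (since its flow preserves the CR structure), but it need not lie in the open cone. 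However, since the cone is open, for sufficiently small $t>0$ the vector field $\xi + tX$ lies in the cone, hence defines a Sasakian structure in $\mathcal S(\mathcal D, J)$ with Reeb vector field $\xi_t := \xi + tX$.

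Next I would analyze the closed orbits of $\xi_t$. The point is that the assumption "$X$ has finitely many closed orbits" should force, for generic small $t$, the flow of $\xi_t$ to have finitely many closed orbits as well — or at least a union of closed leaves $C_t$ that is a finite union of circles. This requires some care: the natural approach is to use the transverse action of the structural (Molino) sheaf of the characteristic foliation, as in~\cite{GT2010,GNT}. The closure of a generic leaf of $\F_t$ (the foliation defined by $\xi_t$) is a torus $T$, and the closed leaves of $\F_t$ correspond to points where this torus action degenerates. Because $X$ descends to a vector field on the leaf space that has finitely many zeros after passing to the closure, one should be able to choose $t$ so that the closed leaves of $\F_t$ lie inside the (finite) set of closed orbits of $X$, hence form a finite set. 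Then Theorem~\ref{thm:carlieb}, or rather the Sasakian corollary stated in Section~\ref{sec:corCL}, applies to the Sasakian structure with Reeb field $\xi_t$ and yields $h^{p,q}(M, \F_t) = 0$ for $p \neq q$.

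Finally, by the rigidity theorem (Theorem in Section~\ref{sec:ridintro}, or the stronger Theorem~\ref{thm:Invariance2}), the basic Hodge numbers of any two Sasakian structures in $\mathcal S(\mathcal D, J)$ coincide; in particular $h^{p,q}(M, \F) = h^{p,q}(M, \F_t) = 0$ for $p \neq q$, which is the desired conclusion.

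\medskip

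\emph{The main obstacle} I anticipate is the middle step: controlling the closed leaves of $\F_t$. It is not immediate that perturbing $\xi$ in the direction of $X$ makes the union of closed leaves finite — a priori the closed leaves of the new foliation could be large even if $X$ has few closed orbits, since the closure dynamics of $\xi_t$ depend on the full leaf closure structure and not just on $X$. One likely needs to argue via the Molino sheaf: the closed leaves of $\F_t$ are precisely the leaves on which the transverse torus action is trivial, and one must show this locus is contained in the fixed-point set of the relevant transverse flow, which in turn relates to the closed orbits of $X$. Making this precise — quite possibly by working on the leaf space of the closure foliation $\overline{\F}$ and using that $X$ induces a vector field there with isolated zeros — is where the real work lies; the rest is a straightforward application of the two main theorems.
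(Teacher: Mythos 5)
Your overall architecture coincides with the paper's --- deform the Reeb field inside $\mathcal S(\cD,J)$ to a Sasakian structure whose characteristic foliation has only finitely many closed leaves, apply Theorem~\ref{thm:vanishing}, and conclude by rigidity (Theorem~\ref{thm:defI}) --- but the step you yourself flag as the obstacle is a genuine gap, and the perturbation $\xi_t=\xi+tX$ does not close it. The problem is that $\xi$ and $X$ need not commute in $\mathfrak{cr}(\cD,J)$, so the closure of the one-parameter group generated by $\xi+tX$ (which is exactly what determines the closed leaves of $\F_t$) bears no a priori relation to the closed orbits of $X$; there is no reason the closed leaves of $\F_t$ should be contained in the finite set of closed $X$-orbits, and the appeal to the Molino sheaf does not by itself supply the missing commutativity.

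The paper fills this gap as follows. It first disposes of the case where $M$ is a (cohomology) sphere via Proposition~\ref{prop:sp}, so that Schoen's theorem makes $\CR(\cD,J)$ compact. By Lemma~\ref{lem:cone} one may replace $(\eta,g)$ by a $\CR(\cD,J)$-invariant structure $(\eta_0,g_0)\in\mathcal S(\cD,J)$ whose Reeb field $\xi_0$ is \emph{central} in $\mathfrak{cr}(\cD,J)$, so that $[\xi_0,X]=0$. The flow of $\xi_0$ then permutes the closed $X$-orbits; since there are finitely many and the flow is connected, it preserves each one. Hence the torus $S$ generated by the flows of $X$ and $\xi_0$ has only finitely many one-dimensional orbits (each such orbit is a circle to which the nowhere vanishing field $X$ is tangent, hence a closed $X$-orbit, and conversely each closed $X$-orbit is $S$-invariant). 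A generic element of $\Lie(S)$ near $\xi_0$ lies in the open cone $\mathfrak{cr}^+(\cD,J)$, and the closed leaves of its characteristic foliation are precisely the one-dimensional $S$-orbits, so Theorem~\ref{thm:vanishing} applies. This averaging-to-centrality step is the missing ingredient in your sketch; once it is in place, your "generic small deformation" heuristic becomes the precise statement you need.
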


\subsection{Deformations of homogeneous Sasakian manifolds}\label{sec:defhom}
We illustrate the main theorems in this article with the example of homogeneous Sasakian manifolds. Note that it is well-known that any homogeneous Sasakian manifold is regular, and the total space of a circle bundle over a generalized flag manifold, which naturally admits the structure of K\"{a}hler manifold.
\begin{thm*}
If a compact manifold $M$ admits a homogeneous Sasakian structure, then $M$ admits also an irregular Sasakian structure whose characteristic foliation $\F$ has only finitely many closed leaves and
$$h^{p,q}(M,\F) = h^{p,q}(G/H)=
\begin{cases}
b^{2k}(G/H) & \text{if}\,\,\,\, p=q=k \;, \\
0 & \text{if}\,\,\,\, p \neq q\;,
\end{cases}
$$
where $G/H$ is the corresponding generalized flag manifold. The number of closed leaves of $\F$ is $\chi(G/H)$, the Euler number of $G/H$.
\end{thm*}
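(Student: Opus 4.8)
The plan is to combine the classification of homogeneous Sasakian manifolds with the two main theorems of the article. By the structure theory cited just before the statement, a compact manifold $M$ carrying a homogeneous Sasakian structure is a regular Sasakian manifold: it is the total space of a principal $S^1$-bundle over a generalized flag manifold $G/H$, and the characteristic foliation $\F_0$ of this homogeneous structure has all leaves closed, with leaf space the K\"ahler manifold $G/H$. In particular the basic Dolbeault cohomology of $(M,\F_0)$ equals the Dolbeault cohomology of $G/H$; since generalized flag manifolds have no odd cohomology and their Hodge structure is concentrated on the diagonal (they are rational projective homogeneous varieties, so $H^{p,q}(G/H)=0$ for $p\neq q$ and $h^{k,k}(G/H)=b^{2k}(G/H)$), the claimed formula for $h^{p,q}$ holds at the base point of the deformation. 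The number $\chi(G/H)$ of zeros of a generic holomorphic vector field on $G/H$ is the Euler number, and $\chi(G/H)=\sum_k b^{2k}(G/H)>0$.

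The next step is to produce the irregular deformation. Fix the CR structure $(\mathcal D,J)$ underlying the homogeneous structure and consider the space $\mathcal S(\mathcal D,J)$ of Sasakian structures with this CR structure. A maximal torus $T\subset \Aut(G/H)$ acts on $G/H$ with isolated fixed points, exactly $\chi(G/H)$ of them; this $T$-action lifts to an action on $M$ by CR automorphisms commuting with the homogeneous Reeb flow. By the theory of Boyer–Galicki–Simanca recalled in the introduction, $\mathcal S(\mathcal D,J)$ is identified with an open cone in the Lie algebra of the CR automorphism group, and I would pick a Reeb vector field $\xi$ lying in (the Lie algebra of) $T$ that is \emph{generic}, i.e.\ generates a dense one-parameter subgroup of $T$. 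The resulting Sasakian structure is irregular, and a leaf of its characteristic foliation $\F$ is closed precisely when the corresponding $T$-orbit downstairs is a point; hence the union $C$ of closed leaves of $\F$ consists of exactly $\chi(G/H)$ circles, in particular is finite. Since $\xi$ lies in the cone through the original Reeb field, the two Sasakian structures are connected by a deformation of type I.

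Now I invoke the two main results. The rigidity theorem (basic Hodge numbers are invariant under deformations of type I) gives $h^{p,q}(M,\F)=h^{p,q}(M,\F_0)=h^{p,q}(G/H)$, which establishes the displayed formula. Independently, the Carrell–Lieberman-type vanishing theorem applies to $\F$ since it has only finitely many closed leaves, and directly yields $h^{p,q}(M,\F)=0$ for $p\neq q$, consistent with the above and serving as a cross-check; moreover it identifies $\sum_p h^{p,p}(M,\F)$ with $\sum_p h^{p,p}(C/\F)$, the number of points of $C/\F$, namely $\chi(G/H)$, which matches $\sum_k b^{2k}(G/H)$. The statement about the number of closed leaves being $\chi(G/H)$ follows from the identification of $C$ with the (lifted) $T$-fixed point set of $G/H$.

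The main obstacle I anticipate is the second step: verifying carefully that a generic choice of Reeb vector field inside $\mathcal S(\mathcal D,J)$ produces exactly the torus-fixed circles as closed leaves and nothing more. This requires knowing that the closed leaves of the characteristic foliation of an element of $\mathcal S(\mathcal D,J)$ coincide with the orbits on which the corresponding flow is periodic, and relating periodicity of the $\xi$-flow to the torus action on $G/H$; one must rule out extra closed leaves arising from non-generic behavior and check that genericity of $\xi$ in $\Lie(T)$ is an open dense (hence nonempty) condition in the relevant cone. The remaining inputs — the Hodge structure of generalized flag manifolds, the equivariant formality of the characteristic foliation (quoted from \cite{GNT}), and the two theorems — are either standard or already available, so the argument is essentially an assembly once the deformation is constructed.
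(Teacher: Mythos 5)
Your proposal is correct and follows essentially the same route as the paper: deform the Reeb field inside the cone $\mathfrak{cr}^+(\mathcal D,J)$ to a generic element of a maximal torus (the paper takes a maximal torus $T\subset H$, whose fixed-point set on $G/H$ is $W(G)/W(H)$), identify the closed leaves with the preimages of the fixed points downstairs, and then apply Theorems~\ref{thm:invtypeI} and~\ref{thm:vanishing}. The only cosmetic differences are that the paper deliberately \emph{derives} $h^{p,q}(G/H)=0$ for $p\neq q$ from these two theorems rather than quoting it for flag manifolds, and counts the closed leaves via the localization result \cite[Theorem~7.11]{GNT} instead of directly via the $T$-fixed points, but both variants are sound.
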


\begin{org}
Sections~\ref{sec:Sasakian},~\ref{sec:basiccoh} and \ref{sec:transverseactionsonchar} are devoted to recall fundamental notions as indicated in the table of contents. Results in Section~\ref{sec:ridintro} are proved in Section~\ref{sec:invarianceofhpq} (see Section~\ref{sec:st}). Equivariant basic Dolbeault cohomology is introduced in Section~\ref{sec:eqcohom} after recalling other cohomologies. The theorems in Sections~\ref{sec:stCL} and~\ref{sec:corCL} are deduced in Section~\ref{sec:vanSas} from the results in Section~\ref{sec:eqHodge}. The theorem in Section~\ref{sec:defhom} is proved in Section~\ref{sec:G/P}.
\end{org}

\begin{ack}
The authors are grateful to Marcel Nicolau for helpful discussions on K\"{a}hler manifolds and tranversely K\"{a}hler foliations. The authors are grateful to Charles Boyer for explaining to us Example~\ref{ex:basicDolbeaultdistinguishes} of two Sasakian structures on a manifold with different basic Hodge numbers.
\end{ack}

\section{Sasakian manifolds}\label{sec:Sasakian}

\subsection{Sasakian manifolds}
Let $M$ be an odd-dimensional manifold with a $1$-form $\eta$ and a Riemannian metric $g$.
\begin{defn}
$(M,\eta,g)$ is called a {\em Sasakian manifold} if $M \times \RR_{>0}$ is a K\"{a}hler manifold with metric $r^{2}g + dr \otimes dr$ and K\"{a}hler form $d(r^{2}\eta)$ where $r$ is the standard coordinate of $\RR_{>0}$.
\end{defn}

The {\em Reeb vector field} $\xi$ of $\eta$ is the vector field on $M$ defined by the equations $\eta(\xi)=1$ and $\iota_\xi d\eta=0$. A Sasakian manifold has an integrable CR structure $(\ker \eta,\Phi)$, where $\Phi$ is obtained from the complex structure on $M \times \RR_{>0}$.

\begin{rem}
In the book by Boyer-Galicki~\cite{Boyer Galicki}, the Sasakian structure is denoted by $(\eta,\xi,\Phi,g)$ including $\xi$ and $\Phi$ following preceding references. We will omit $\xi$ and $\Phi$ from this notation, because these are determined by $\eta$ and $g$.
\end{rem}

\begin{ex}
The odd-dimensional spheres with round metric and standard contact form are Sasakian. Other examples of Sasakian manifolds are the total spaces of circle bundles over K\"ahler manifolds whose Euler class is the K\"ahler class, contact toric manifolds of Reeb type (see~\cite[Theorem~5.2]{Boyer Galicki 2}), and links of isolated singularities of hypersurfaces defined by weighted homogeneous polynomials (\cite[Chapters~7~and~9]{Boyer Galicki} and references therein).
\end{ex}

\begin{rem}
Sasakian manifolds are examples of $K$-contact manifolds, i.e., contact manifolds whose Reeb flow preserve an (adapted) Riemannian metric. Hence, the results in~\cite{GNT} are applicable in our situation.
\end{rem}

\subsection{The Reeb flow and the characteristic foliation}\label{sec:torus}
The {\em Reeb flow} of $\eta$ is the flow generated by the Reeb vector field $\xi$ of $\eta$.
\begin{defn}
The orbit foliation $\F$ of the Reeb flow of $\eta$ is called the {\em characteristic foliation} of $(M,\eta,g)$.
\end{defn}
One can see that the Reeb flow of $\eta$ leaves $\eta$ and $g$ invariant by definition. If $M$ is compact, as we will assume throughout the paper, then the Reeb flow of $\eta$ yields a natural torus action on $M$: the closure $T$ of the Reeb flow in the isometry group $\Isom(M,g)$ of $(M,g)$ is a connected abelian Lie subgroup. As $\Isom(M,g)$ is a compact Lie group by the Myers-Steenrod theorem~\cite{Myers Steenrod}, so is the closed subgroup $T$, which implies that $T$ is a torus.

Because the Reeb flow preserves $\eta$ and $g$, the $T$-action preserves $\eta$ and $g$ by continuity. We say that the characteristic foliation (respectively the Sasakian structure) is \emph{regular} if $\dim T=1$ and the $T$-action is free, \emph{quasi-regular} if $\dim T=1$ and the $T$-action is only locally free, and \emph{irregular} if $\dim T>1$.

\section{Basic cohomology of transversely K\"ahler foliations} \label{sec:basiccoh}

\subsection{Transversely holomorphic foliations}\label{sec:thf}
We recall the definition of transversely holomorphic foliations.  A {\em transversely holomorphic Haefliger cocycle} of complex codimension $r$ on a manifold $M$ is a triple $(\{U_i\},\{\pi_i\},\{\gamma_{ij}\})$ consisting of
\begin{enumerate}
\item an open covering $\{U_i\}$ of $M$,
\item submersions $\pi_i:U_i\to \CC^r$,
\item biholomorphic transition functions $\gamma_{ij}:\pi_j(U_i\cap U_j)\to \pi_i(U_i\cap U_j)$ such that $\pi_i=\gamma_{ij}\circ \pi_j$.
\end{enumerate}
Two transversely holomorphic Haefliger cocycles on $M$ are said to be equivalent if their union is a transversely holomorphic Haefliger cocycle on $M$. A {\em transversely holomorphic foliation} of complex codimension $r$ is defined to be an equivalence class of transversely holomorphic Haefliger cocycles of complex codimension $r$.

\begin{rem}
Each transversely holomorphic foliation of codimension $r$ has an underlying real foliation of codimension $2r$, whose restriction to $U_{i}$ is defined by the fibers of $\pi_i:U_i\to \CC^r$. The above three conditions mean that $\F$ is transversely modeled on $\CC^r$ and that the transition functions are biholomorphic.
\end{rem}

A transversely holomorphic foliation $\F$ is called {\em transversely Hermitian} if there is a Hermitian metric $h_{i}$ on $\pi_{i}(U_{i})$ such that $\gamma_{ij}^{*}h_{i} = h_{j}$.
$\F$ is called {\em transversely K\"{a}hler} if the fundamental form $\omega_{i}={\rm Im}(h_i)$ of $h_{i}$ on $\pi_{i}(U_{i})$ is closed for every $i$.

The normal bundle $\nu\F=TM/T\F$ of a transversely holomorphic foliation has a natural complex structure: for each point $x\in U_{i}$, we get an isomorphism $\nu_x\F\to T_{\pi_{i}(x)}\CC^r$ induced from $\pi_{i}$. The complex structure induced on $\nu_x\F$ from $T_{\pi_{i}(x)}\CC^r$ is independent of $i$ because the transition functions $\gamma_{ij}$ are biholomorphic. So the normal bundle $\nu \F$ has a complex structure $J$, which is called the {\em transverse complex structure} of $\F$. In a similar way, the normal bundle of a transversely Hermitian foliation has a natural Hermitian metric called the {\em transverse Hermitian metric} of $\F$.

These notions are important in this article mainly because of the following well-known fact, see e.g.~{\cite[Section~2]{Boyer Galicki Nakamaye2}}:
\begin{lemma}\label{lem:ReebtrKaehler} The characteristic foliation of a Sasakian manifold $(M,\eta,g)$ is transversely K\"{a}hler, with fundamental form $d\eta$.
\end{lemma}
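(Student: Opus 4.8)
The plan is to exhibit the transverse complex and Hermitian structure on $\F$ directly from the CR structure $(\mathcal D,\Phi)$, where $\mathcal D=\ker\eta$, and then identify its fundamental form with $d\eta$. Since $TM=\mathcal D\oplus\RR\xi$ while $T\F=\RR\xi$, there is a canonical isomorphism $\mathcal D\cong\nu\F$; I would transport $\Phi|_{\mathcal D}$ and $g|_{\mathcal D}$ across it and check that the resulting objects on $\nu\F$ are holonomy invariant, hence descend to the local leaf spaces, yielding the desired transverse K\"ahler datum.

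First I would record the infinitesimal invariance facts for the Reeb field: $\mathcal L_\xi g=0$ (true since a Sasakian manifold is $K$-contact), $\iota_\xi d\eta=0$ and $\mathcal L_\xi\eta=0$ by the definition of $\xi$, and $\mathcal L_\xi\Phi=0$, i.e.\ the Reeb flow acts by CR automorphisms. The last fact is a standard property of Sasakian structures; alternatively, on the K\"ahler cone $M\times\RR_{>0}$ the field $\xi$ corresponds to $I(r\partial_r)$ with $I$ the cone complex structure, which is real holomorphic, so $\mathcal L_\xi I=0$ restricts to $\mathcal L_\xi\Phi=0$ on $M$. It follows that the almost complex structure $J$ on $\nu\F$ induced from $\Phi$ and the bundle metric induced from $g$ are holonomy invariant, and that $\mathcal L_\xi d\eta=d\,\iota_\xi d\eta=0$, so $d\eta$ is a basic $2$-form.

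Next I would show $\F$ is transversely holomorphic. On a foliated chart $\pi_i\colon U_i\to V_i$, the invariant structure $J$ descends to an almost complex structure on the local leaf space $V_i$; to see that it is integrable and that the $V_i$ glue biholomorphically, I would use integrability of the CR structure, in the form that the complex distribution $\mathcal D^{0,1}\oplus\CC\xi\subset TM\otimes\CC$ is involutive --- since $[\Gamma(\mathcal D^{0,1}),\Gamma(\mathcal D^{0,1})]\subset\Gamma(\mathcal D^{0,1})$ is the (conjugate) CR integrability condition and $[\xi,\Gamma(\mathcal D^{0,1})]\subset\Gamma(\mathcal D^{0,1})$ because $\mathcal L_\xi\Phi=0$ preserves the $(0,1)$-eigenbundle. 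By the holomorphic Frobenius (Newlander--Nirenberg) theorem, this involutive distribution, which contains $T\F\otimes\CC$ and maps isomorphically onto the $(0,1)$-part of $\nu\F\otimes\CC$, is exactly a transversely holomorphic Haefliger cocycle for $\F$ inducing $J$; in particular the $V_i$ are complex manifolds and the transition functions are biholomorphic. For the K\"ahler property, $g|_{\mathcal D}$ is $\Phi$-Hermitian because $g(\Phi X,\Phi Y)=g(X,Y)$ for $X,Y\in\mathcal D$, so it descends to transverse Hermitian metrics $h_i$ on the $V_i$; by the compatibility of $g$, $\Phi$ and $d\eta$ in a contact metric structure, the fundamental form of $h_i$ pulls back under $\pi_i$ to $d\eta|_{U_i}$ (in the normalization under which the lemma is stated), and this form is closed since $d\eta$ is. Hence $\F$ is transversely K\"ahler with fundamental form $d\eta$.

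The only genuinely non-formal step is the passage from integrability of the CR structure to transverse holomorphicity of $\F$ --- verifying that the induced almost complex structures on the leaf spaces are integrable and compatibly glued --- which is where the holomorphic Frobenius theorem enters; the remaining steps are bookkeeping with the standard identities for Sasakian and contact metric structures. As the lemma is classical, I would present this argument only in sketch form, referring to~\cite[Section~2]{Boyer Galicki Nakamaye2} for the details.
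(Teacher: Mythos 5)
Your argument is correct. The paper does not actually prove this lemma --- it records it as a well-known fact with a pointer to~\cite[Section~2]{Boyer Galicki Nakamaye2} --- and your sketch is a sound expansion of that standard argument: the invariance identities $\mathcal L_\xi\eta=\mathcal L_\xi g=\mathcal L_\xi\Phi=0$, the involutivity of $\mathcal D^{0,1}\oplus\CC\xi$ coming from CR integrability, Nirenberg's complex Frobenius theorem to produce the transversely holomorphic Haefliger cocycle, and the identification of the (basic, closed, type $(1,1)$) form $d\eta$ with the transverse fundamental form. You also correctly isolate the one genuinely non-formal step, the passage from CR integrability to transverse holomorphicity.
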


\subsection{Basic Dolbeault cohomology}\label{subsec:BasicDolbeault}

Differential forms are $\CC$-valued throughout this paper. Let $\F$ be a transversely holomorphic foliation of $M$ given by a cocycle $(\{U_i\},\{\pi_i\},\{\gamma_{ij}\})$. A $k$-form $\alpha$ on $M$ is called {\em basic} if for every $i$ there exists a $k$-form $\alpha_{i}$ on $\pi_{i}(U_{i})$ such that $\alpha|_{U_{i}} = \pi_{i}^{*}\alpha_{i}$. Since the differential commutes with $\pi_{i}^{*}$, the set $\Omega^\bullet(M,\F)$ of basic forms is a subcomplex of $(\Omega^\bullet(M),d)$. The cohomology of $(\Omega^\bullet(M,\F),d)$ is called the {\em basic cohomology} of $(M,\F)$ and denoted by $H^{\bullet}(M,\F)$. The dimension of $H^{k}(M,\F)$ is called the $k${\em -th basic Betti number} of $(M,\F)$ and denoted by $b^{k}(M,\F)$.

We will recall the definition of basic Dolbeault cohomology of $\F$. A $k$-form on $M$ is called a {\em basic} $(p,q)$-{\em form} if for every $i$ there exists a $(p,q)$-form $\alpha_{i}$ on $\pi_{i}(U_{i})$ such that $\alpha|_{U_{i}} = \pi_{i}^{*}\alpha_{i}$. We denote the set of basic $(p,q)$-forms on $(M,\F)$ by $\Omega^{p,q}(M,\F)$. We have a canonical decomposition $\Omega^k(M,\F)=\bigoplus_{p+q=k} \Omega^{p,q}(M,\F)$. On $U_{i}$, we can decompose $d\alpha$ as
\begin{equation}\label{eq:doldiff0}
d\alpha|_{U_{i}} = \pi_{i}^{*} d\alpha_{i} = \pi_{i}^{*} \partial\alpha_{i} + \pi_{i}^{*} \bdel \alpha_{i}\;,
\end{equation}
where $\partial\alpha_{i}$ is a $(p+1,q)$-form on $\pi_{i}(U_{i})$ and $\bdel \alpha_{i}$ is a $(p,q+1)$-form on $\pi_{i}(U_{i})$. Since the transition functions $\gamma_{ij}$ are biholomorphic, this gives well-defined basic forms $\partial_b \alpha$ and $\bdel_b \alpha$, so we obtain the {\em basic Dolbeault operator} and its complex conjugate
\begin{equation}\label{eq:doldiff1}
{\renewcommand\arraystretch{1.5}
\begin{array}{l}
\bdel_{b} \colon \Omega^{p,q}(M,\F) \longrightarrow \Omega^{p,q+1}(M,\F)\;, \\
\partial_{b} \colon \Omega^{p,q}(M,\F) \longrightarrow \Omega^{p+1,q}(M,\F)\;.
\end{array}}
\end{equation}

The differential complex $(\Omega^{p,\bullet}(M,\F),\delbar_b)$ is called the ($p$-{\em th}) {\em basic Dolbeault complex} of $(M,\F)$. Its cohomology is called the ($p$-{\em th}) {\em basic Dolbeault cohomology} of $(M,\F)$ which is denoted by $H^{p,\bullet}(M,\F)$. The dimension
$$
h^{p,q}(M,\F):=\dim H^{p,q}(M,\F)
$$
is called the $(p,q)${\em -th basic Hodge number} of $\F$.

For basic $2$-forms on $(M,\F)$, positivity (resp., negativity) is defined in a way analogous to the positivity (resp., negativity) of $2$-forms on complex manifolds. We recall

\begin{defn}\label{def:pos}
A compact Sasakian manifold $(M, \eta,g)$ is called \emph{positive} (resp., \emph{negative}) if the basic first Chern class of the normal bundle of the characteristic foliation (see~\cite[Section~3.5.2]{ElKacimi}) is represented by a positive (resp., negative) basic $(1,1)$-form. If the basic first Chern class is presented by the trivial form, then $(M, \eta,g)$ is said to be \emph{null}.
\end{defn}

\begin{ex}\label{ex:basicDolbeaultdistinguishes} As mentioned in the introduction, on a fixed compact manifold, the basic Betti numbers of the characteristic foliation of any Sasakian structure cannot distinguish Sasakian structures~\cite[Theorem~7.4.14]{Boyer Galicki}. Let us give an example that the basic Hodge numbers can distinguish different Sasakian structures. Consider $M=21\# (S^2\times S^3)$, the $21$-fold connected sum of $S^2\times S^3$. By~\cite[Example~10.3.10]{Boyer Galicki}, $M$ admits a regular Sasakian structure as an $S^1$-bundle over a $K3$ surface, so that $h^{2,0}(M,\F)=h^{0,2}(M,\F)=1$ and $h^{1,1}(M,\F)=20$. On the other hand, $M$ also admits positive Sasakian structures, for example given by the link of the weighted polynomial $z_0^{22}+z_1^{22} +z_2^{22}+z_0z_3$ with weights $(1,1,1,21)$, see \cite[Example~10.3.7]{Boyer Galicki}, in particular the first line of the table on p.~356. By \cite[Proposition~9.6.3]{Boyer Galicki} (see also \cite[Proposition~7.5.25]{Boyer Galicki}), this Sasakian structure is indeed positive. By~\cite[Theorem~1.2]{Nozawa}, we have $h^{2,0}(M,\F)=h^{0,2}(M,\F)=0$ for any positive Sasakian structure on $M$, and consequently, $h^{1,1}(M,\F) = b^{2}(M,\F) = 22$. Note that Gomez~\cite{Gomez} constructed negative Sasakian structures on $M$, but we do not know their basic Hodge numbers.
\end{ex}

\subsection{Transverse Hodge theory for transversely K\"{a}hler foliations}\label{sec:HodgetrKaehler}

We recall the transverse Hodge theory for transversely K\"{a}hler foliations due to El Kacimi-Alaoui~\cite{ElKacimi}. The transverse Hermitian metric $h$ on the normal bundle $\nu \F$ of $\F$ determines a Hodge star operator $\stb : \wedge^{\bullet} \nu_{x}^{*}\F_{\CC} \longrightarrow \wedge^{2n-\bullet} \nu_{x}^{*} \F_{\CC}$ on the complexification $\nu \F_{\CC}$ of $\nu \F$. Since the bidegree decomposition of $\wedge^{\bullet} \nu_{x}^{*}\F_{\CC}$ (described in more detail in Section~\ref{sec:extension}) is preserved  at each point $x$ on $M$, we get
$$
\stb : \Omega^{p,q}(M,\F) \longrightarrow \Omega^{n-q,n-p}(M,\F)\;.
$$
Define $\bdel_b^* = -\stb \partial_b \stb$ and consider the basic Laplacian
$$
\Delta_{b}:=\bdel_b \bdel_b^*+\bdel_{b}^* \bdel_{b} : \Omega^{p,q}(M,\F) \longrightarrow \Omega^{p,q}(M,\F)\;.
$$

For a Riemannian foliation of codimension $m$ on a connected closed manifold, $H^{m}(M,\F)=\CC$ or $H^{m}(M,\F)=0$ by a result of~\cite{ESH}. If $H^{m}(M,\F)=\CC$ then $(M,\F)$ is called {\it homologically orientable}. El~Kacimi-Alaoui~\cite{ElKacimi} has generalized the classical Hodge theory for K\"{a}hler manifolds to transversely K\"ahler foliations as follows:

\begin{thm}[{\cite[Th\'eor\`eme 3.3.3]{ElKacimi}}]\label{thm:EKA}
If $\F$ is a transversely Hermitian foliation on a compact manifold $M$, then we have a decomposition
$$
\Omega^{p,q}(M,\F) = \ker \Delta_{b} \oplus \Image \bdel_b \oplus \Image \bdel^{*}_b\;.
$$
In particular, there is an isomorphism $H^{p,q}(M,\F) \cong \ker \Delta_{b}$.
\end{thm}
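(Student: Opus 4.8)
\emph{Plan of proof.} The idea is to run the classical Hodge argument, the one real obstruction being that $\Delta_b$ is not elliptic on $M$: its principal symbol vanishes in the directions tangent to $\F$, so it is merely \emph{transversely elliptic}. I would first fix a bundle-like metric on $M$ whose transverse part is the given Hermitian metric on $\nu\F$; this induces an $L^2$ inner product on each $\Omega^{p,q}(M,\F)$, and one checks, exactly as on a compact Hermitian manifold, that $\bdel_b^{*}=-\stb\partial_b\stb$ is the formal adjoint of $\bdel_b$ for this inner product — here one uses that $\stb$ preserves basic forms, as recorded in Section~\ref{sec:HodgetrKaehler}, together with an integration by parts over $M$. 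It then follows formally that $\langle\Delta_b\alpha,\alpha\rangle=\|\bdel_b\alpha\|^2+\|\bdel_b^{*}\alpha\|^2$, hence $\ker\Delta_b=\ker\bdel_b\cap\ker\bdel_b^{*}$, and that the three subspaces $\ker\Delta_b$, $\Image\bdel_b$, $\Image\bdel_b^{*}$ are pairwise orthogonal (using $\bdel_b^2=0$). All of this is identical to the Kähler case; the real content is the completeness statement, equivalently that $\ker\Delta_b$ is finite-dimensional, that $\Image\Delta_b$ is closed in the $L^2$-completion, and that $\Omega^{p,q}(M,\F)=\ker\Delta_b\oplus\Image\Delta_b$ with $\Image\Delta_b=\bdel_b\Omega^{p,q-1}(M,\F)+\bdel_b^{*}\Omega^{p,q+1}(M,\F)$.

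\emph{Analytic input.} To obtain this I would follow the construction of El Kacimi-Alaoui and Hector: extend $\Delta_b$ to an operator $\widetilde{\Delta}$ acting on \emph{all} $(p,q)$-forms on $M$ by adding a leafwise (tangential) Laplacian, arranged so that (i) $\widetilde{\Delta}$ is strongly elliptic on the compact manifold $M$, and (ii) $\widetilde{\Delta}$ preserves the subspace $\Omega^{p,q}(M,\F)$ and restricts there to $\Delta_b$ — the point being that a basic form is leafwise parallel, so the tangential term annihilates it. The ordinary Hodge theory for the elliptic operator $\widetilde{\Delta}$ on the closed manifold $M$ then supplies a finite-dimensional harmonic space, a smoothing Green operator $G$, and the decomposition $\alpha=H\alpha+\widetilde{\Delta}G\alpha$. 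One then descends to basic forms: $G$ preserves $\Omega^{p,q}(M,\F)$ (because $\widetilde{\Delta}$ does, using uniqueness in the elliptic decomposition, or by averaging over leaf closures), and a basic $\widetilde{\Delta}$-harmonic form is precisely a basic $\Delta_b$-harmonic form. Restricting $\alpha=H\alpha+\widetilde{\Delta}G\alpha$ to basic $\alpha$ and writing $\widetilde{\Delta}G\alpha=\Delta_b G\alpha=\bdel_b(\bdel_b^{*}G\alpha)+\bdel_b^{*}(\bdel_b G\alpha)$ gives the asserted decomposition. The isomorphism $H^{p,q}(M,\F)\cong\ker\Delta_b$ is then formal: each $\bdel_b$-closed basic form is the sum of its harmonic part and a $\bdel_b$-exact form, uniquely.

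\emph{Main obstacle.} The crux is producing the elliptic extension $\widetilde{\Delta}$ with both required properties simultaneously — strong ellipticity \emph{and} invariance of the basic subspace — and then verifying that the resulting Green operator is compatible with basicness; this is really the statement that $\Delta_b$ satisfies the "basic elliptic estimates" that let it behave like an elliptic operator on the (non-closed, infinite-codimension) subcomplex of basic forms. An alternative route, also due to El Kacimi-Alaoui, bypasses the extension and proves these estimates directly via a transverse Gårding inequality, exploiting that in a distinguished chart the local leaf space is an honest Riemannian manifold on which $\Delta_b$ is genuinely elliptic, and patching with a foliate partition of unity; either way this is the only non-formal ingredient. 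A secondary technical point to isolate is the formal adjointness in the first step, which tacitly needs the transverse volume form to be closed (tautness); for the characteristic foliation of a Sasakian manifold this holds automatically since the Reeb flow is isometric, but in the general transversely Hermitian case it must be built into the choice of metric.
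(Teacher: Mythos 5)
The paper does not actually prove Theorem~\ref{thm:EKA}; it is quoted from El Kacimi-Alaoui~\cite[Th\'eor\`eme~3.3.3]{ElKacimi}, so there is no in-paper argument to compare against. Your outline reproduces the strategy of that reference (and of~\cite{ElKacimiHector}): the formal Hodge-theoretic reductions plus an extension of $\Delta_b$ to a strongly elliptic operator on a larger space of transverse forms that preserves the basic subspace --- the very construction the paper itself adapts in Section~\ref{sec:extension} for its operator $D=\mathcal L_\xi\mathcal L_\xi-\Delta_b$ --- and the two caveats you isolate (tautness for the adjoint formula $\bdel_b^*=-\stb\partial_b\stb$, and compatibility of the Green operator with basicness) are precisely the points where the real work in the cited proof lies.
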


For a transversely K\"ahler foliation, the K\"{a}hler identities for $\bdel_b$ and $\bdel^{*}_b$ on basic forms are reduced to the K\"ahler identities on each domain $\pi_{i}(U_{i})$ in $\CC^{r}$. So Theorem~\ref{thm:EKA} has the following consequence:
\begin{thm}[{\cite[Th\'eor\`eme 3.4.6]{ElKacimi}}]\label{thm:hodgedecomp}
If $\F$ is a
homologically orientable
transversely K\"ahler foliation on a compact manifold $M$, then we have a basic Hodge decomposition
$$
H^k(M,\F)=\bigoplus_{p+q=k}H^{p,q}(M,\F)\;.
$$
In particular, we have
$$
b^k(M,\F)=\sum_{p+q=k}h^{p,q}(M,\F)\;.
$$
\end{thm}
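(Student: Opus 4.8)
The plan is to transcribe the classical proof of the Hodge decomposition of a compact Kähler manifold to the transverse setting. The two inputs that replace the classical de Rham and Dolbeault Hodge theorems are: the basic de Rham Hodge theorem for homologically orientable Riemannian foliations (El Kacimi-Alaoui~\cite{ElKacimi}), which gives $H^k(M,\F)\cong\mathcal H^k(M,\F):=\ker(\Delta_b^{dR}|_{\Omega^k(M,\F)})$, where $\Delta_b^{dR}=d_bd_b^*+d_b^*d_b$ is the basic de Rham Laplacian; and Theorem~\ref{thm:EKA}, which gives $H^{p,q}(M,\F)\cong\mathcal H^{p,q}(M,\F):=\ker(\Delta_b|_{\Omega^{p,q}(M,\F)})$, with $\Delta_b$ the basic $\bdel_b$-Laplacian of Section~\ref{sec:HodgetrKaehler}. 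What links the two is the basic version of the Kähler identities.

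First I would globalize the transverse Kähler data. Since $\gamma_{ij}^*h_i=h_j$, the local fundamental forms $\omega_i={\rm Im}(h_i)$ satisfy $\gamma_{ij}^*\omega_i=\omega_j$, so they patch to a closed basic $(1,1)$-form $\omega$ on $(M,\F)$ with $\omega|_{U_i}=\pi_i^*\omega_i$; this provides a Lefschetz operator $L_b=\omega\wedge\cdot$ and its metric adjoint $\Lambda_b$ on basic forms, which together with $\stb$ and the adjoints $\partial_b^*=-\stb\bdel_b\stb$, $\bdel_b^*=-\stb\partial_b\stb$ all preserve $\Omega^\bullet(M,\F)$. From~\eqref{eq:doldiff0} one reads off $d_b=\partial_b+\bdel_b$ on basic forms. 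The key observation is that a basic $(p,q)$-form restricts over $U_i$ to the $\pi_i$-pullback of a $(p,q)$-form on the Kähler manifold $(\pi_i(U_i),h_i)\subset\CC^r$, and each operator above is there the pullback of the corresponding classical operator on $\pi_i(U_i)$. Since the Kähler identities are pointwise, algebraic, first-order relations, they hold verbatim on basic forms: $[\Lambda_b,\bdel_b]=-i\partial_b^*$ and its conjugate. Taking the usual consequences yields $\Delta_b^{dR}=2\Delta_b=2\Delta_{\partial_b}$ as operators on $\Omega^\bullet(M,\F)$.

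The identity $\Delta_b^{dR}=2\Delta_b$ does the rest. Since $\Delta_b$ acts within each $\Omega^{p,q}(M,\F)$, it follows that $\Delta_b^{dR}$ preserves the bidegree decomposition $\Omega^k(M,\F)=\bigoplus_{p+q=k}\Omega^{p,q}(M,\F)$; hence a basic de Rham harmonic $k$-form splits into its $(p,q)$-components, each again $\Delta_b^{dR}$-harmonic, hence $\Delta_b$-harmonic (same kernel), so $\mathcal H^k(M,\F)=\bigoplus_{p+q=k}\mathcal H^{p,q}(M,\F)$. Moreover a $\Delta_b$-harmonic $(p,q)$-form is $\Delta_b^{dR}$-harmonic, in particular $d_b$-closed, hence represents a basic de Rham class; this is how $\mathcal H^{p,q}(M,\F)$, and thus $H^{p,q}(M,\F)$, sits inside $H^k(M,\F)$, exactly as in the compact Kähler case (and, as there, the resulting subspace of $H^k(M,\F)$ is independent of the chosen transverse Kähler metric). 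Combining with the two Hodge isomorphisms, $H^k(M,\F)\cong\mathcal H^k(M,\F)=\bigoplus_{p+q=k}\mathcal H^{p,q}(M,\F)\cong\bigoplus_{p+q=k}H^{p,q}(M,\F)$, and taking dimensions gives $b^k(M,\F)=\sum_{p+q=k}h^{p,q}(M,\F)$.

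The step I expect to require the most care is the reduction of the basic Kähler identities to the transverse local models: one must verify that $\stb$, $L_b$, $\Lambda_b$, $\partial_b^*$ and $\bdel_b^*$ genuinely restrict to $\Omega^\bullet(M,\F)$ and are there obtained by pulling back the corresponding operators on $\pi_i(U_i)\subset\CC^r$ — in particular that the basic adjoints computed via $\stb$ are the $L^2(M)$-adjoints of $\partial_b,\bdel_b$ on basic forms, so that the classical identities transfer unchanged. Homological orientability enters separately, to guarantee the basic de Rham Hodge isomorphism $H^k(M,\F)\cong\mathcal H^k(M,\F)$ invoked above; granting these two facts, the remainder is a line-by-line transcription of the compact Kähler argument.
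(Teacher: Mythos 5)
Your proposal is correct and follows essentially the same route as the paper, which justifies the theorem in one line by observing that the basic K\"ahler identities for $\bdel_b$ and $\bdel_b^*$ reduce to the classical ones on the local models $\pi_i(U_i)\subset\CC^r$ and then invoking Theorem~\ref{thm:EKA} (deferring details to El Kacimi-Alaoui's Th\'eor\`eme 3.4.6). Your expansion — globalizing the transverse K\"ahler form, deducing $\Delta_b^{dR}=2\Delta_b$, and splitting harmonic forms by bidegree — is exactly the argument behind that citation, and you correctly identify the role of homological orientability in making the $\stb$-adjoints the genuine $L^2$-adjoints.
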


These theorems are relevant in our situation, because the characteristic foliation $\F$ of a Sasakian manifold $(M^{2n+1},\eta,g)$ is a transversely K\"ahler foliation by Lemma~\ref{lem:ReebtrKaehler}. If $M$ is compact and connected, $\F$ is moreover homologically orientable, since $[d\eta]^n$ is nontrivial in $H^n(M,\F)$, as one can easily see (e.g.~\cite[Proposition~7.2.3]{Boyer Galicki}). Alternatively, homological orientability can also be shown with a result~\cite[Th\'{e}or\`{e}me~A]{MolSer} of Molino-Sergiescu from the fact that the Reeb flow is isometric.

\section{Invariance of basic Hodge numbers under deformations of type I} \label{sec:invarianceofhpq}

\subsection{The cone of Reeb vector fields}\label{sec:cone}

Let $(M,\eta,g)$ be a compact, connected Sasakian manifold with Reeb vector field $\xi$. Let $(\cD,J)$ be the underlying CR structure of $(\eta,g)$, where $\cD=\ker\eta$ and $J$ is determined by $g(X,Y) = d\eta(X,JY)$. The group $\AUT(M,\eta,g)$ of diffeomorphisms that preserve $\eta$ and $g$ is a compact Lie group by Myers-Steenrod's theorem~\cite{Myers Steenrod}. The group $\CR(\cD,J)$ of diffeomorphisms that respect $\cD$ and $J$ is a Lie group by~\cite[l.~24 on p.~245]{ChernMoser} and we denote its Lie algebra by $\Lcr(\cD,J)$. If $M$ is not CR diffeomorphic to the sphere with standard CR structure, then $\CR(\cD,J)$ is compact by a theorem of Schoen~\cite{Schoen}. Clearly, we have $\AUT(M,\eta,g)\subset \CR(\cD,J)$.

Following~\cite{BGS}, we denote by ${\mathcal S}(\mathcal D,J)$ the set of Sasakian structures on $M$ with underlying CR structure $(\mathcal D,J)$. Define the convex cone
$$
\mathfrak{cr}^+(\mathcal D,J)=\{\zeta\in \mathfrak{cr}(\mathcal D,J)\mid \eta(\zeta)>0\}\;.
$$
By~\cite[Lemma~6.4]{BGS}, the map ${\mathcal S}(\mathcal D,J)\to \mathfrak{cr}^+(\mathcal D,J)$ sending a Sasakian structure to its Reeb vector field is a bijection.

We will use this cone in the next lemma.
\begin{lemma}\label{lem:cone}
Assume that the CR diffeomorphism group $\mathfrak{CR}(\mathcal D,J)$ of the underlying CR structure $(\mathcal D,J)$ of a compact Sasakian manifold $(M,\eta,g)$ is a compact Lie group. Then there exists a Sasakian structure $(\eta_{0},g_{0})$ in ${\mathcal S}(\mathcal D,J)$ such that for any Sasakian structure $(\eta_{1},g_{1})$ in ${\mathcal S}(\mathcal D,J)$, there exists a smooth family $\{(\eta_{s},g_{s})\}_{0 \leq s \leq 1}$ in ${\mathcal S}(\mathcal D,J)$ and  a torus $T$ in  $\mathfrak{CR}(\mathcal D,J)$ such that
\begin{enumerate}
\item $T\subset\AUT(\eta_{s},g_{s})$ for any $s$ and
\item the Reeb flow of $\eta_{s}$ is a one-parameter subgroup of the $T$-action.
\end{enumerate}
\end{lemma}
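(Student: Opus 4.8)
The plan is to single out $(\eta_0,g_0)$ as a Sasakian structure in ${\mathcal S}(\mathcal D,J)$ whose Reeb vector field is \emph{central} in the Lie algebra $\mathfrak{cr}(\mathcal D,J)$; once such a structure is at hand, the rest follows by joining it to an arbitrary $(\eta_1,g_1)\in{\mathcal S}(\mathcal D,J)$ along the affine segment of their Reeb vector fields. Throughout I use the bijection ${\mathcal S}(\mathcal D,J)\to\mathfrak{cr}^+(\mathcal D,J)$ of~\cite[Lemma~6.4]{BGS}: the structure with Reeb vector field $\zeta\in\mathfrak{cr}^+(\mathcal D,J)$ has contact form $\eta/\eta(\zeta)$ and metric built from $\eta/\eta(\zeta)$, $J$ and $\zeta$, so a smoothly varying family of Reeb vector fields in the cone corresponds to a smooth family of Sasakian structures.

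Granting that a central $\xi_0\in\mathfrak{cr}^+(\mathcal D,J)$ exists, let $(\eta_0,g_0)$ be the associated structure; I claim it has the stated property. For an arbitrary $(\eta_1,g_1)\in{\mathcal S}(\mathcal D,J)$ with Reeb vector field $\xi_1$, put $\xi_s=(1-s)\xi_0+s\xi_1$; since $\mathfrak{cr}^+(\mathcal D,J)$ is a convex cone, $\xi_s\in\mathfrak{cr}^+(\mathcal D,J)$ for $s\in[0,1]$, and the corresponding smooth family $\{(\eta_s,g_s)\}_{0\le s\le1}$ runs from $(\eta_0,g_0)$ to $(\eta_1,g_1)$. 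As $\xi_0$ is central, $[\xi_0,\xi_1]=0$, so $\RR\xi_0+\RR\xi_1$ is an abelian subalgebra of $\mathfrak{cr}(\mathcal D,J)$; I take $T$ to be the closure in $\mathfrak{CR}(\mathcal D,J)$ of the connected abelian subgroup it generates, a torus whose Lie algebra contains $\xi_0$ and $\xi_1$, hence every $\xi_s$. Then the Reeb flow $t\mapsto\exp(t\xi_s)$ of $\eta_s$ is a one-parameter subgroup of $T$, which is condition~(2). For condition~(1): an element $a\in T$ preserves $(\mathcal D,J)$ because $T\subset\mathfrak{CR}(\mathcal D,J)$, and it fixes the vector field $\xi_s$ because the flow of $\xi_s$ lies in the abelian group $T$ and so commutes with $a$; since $\eta_s=\eta/\eta(\xi_s)$ and $a$ rescales $\eta$ by a function while fixing $\xi_s$, one gets $a^*\eta_s=\eta_s$, and since $g_s$ is built from $\eta_s$, $J$ and $\xi_s$, also $a^*g_s=g_s$, so $T\subset\AUT(\eta_s,g_s)$.

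The substantive point --- and the step I expect to be the main obstacle --- is the existence of a central element $\xi_0$ in the open convex cone $\mathfrak{cr}^+(\mathcal D,J)$; note that an obvious candidate such as a generic (regular) Reeb vector field does \emph{not} work, as a given $\xi_1$ need not commute with it. I would obtain $\xi_0$ by averaging. One first checks that $\mathfrak{cr}^+(\mathcal D,J)$ is invariant under the adjoint action of $G:=\mathfrak{CR}(\mathcal D,J)$: every $a\in G$ preserves $\mathcal D=\ker\eta$, so $a^*\eta=\lambda_a\,\eta$ for a nowhere-vanishing function $\lambda_a$, and $\lambda_a>0$ because $a$ also preserves $J$ and the Levi form $d\eta(\cdot,J\cdot)|_{\mathcal D}$ is positive definite; consequently $\eta(\Ad(a)\zeta)$ is a positive multiple of $\eta(\zeta)\circ a^{-1}$, and $\Ad(a)$ respects the condition $\eta(\zeta)>0$. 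Now take any $\zeta\in\mathfrak{cr}^+(\mathcal D,J)$, for instance the Reeb vector field of $(\eta,g)$, and form the average $\xi_0:=\int_{G_0}\Ad(a)\zeta\,da$ over the compact identity component $G_0$ of $G$ with respect to normalized Haar measure. The orbit $\{\Ad(a)\zeta:a\in G_0\}$ is a compact subset of the open convex set $\mathfrak{cr}^+(\mathcal D,J)$, so its convex hull, which contains $\xi_0$, is again contained in $\mathfrak{cr}^+(\mathcal D,J)$; and $\xi_0$ is $\Ad(G_0)$-invariant by left invariance of Haar measure, hence --- $G_0$ being connected --- central in $\mathfrak{cr}(\mathcal D,J)$. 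So $\xi_0$ is the required central positive element. This averaging step is exactly where the convex-cone structure of ${\mathcal S}(\mathcal D,J)$, rather than just its underlying set, comes in; everything else is a matter of unwinding the fact that a structure in ${\mathcal S}(\mathcal D,J)$ is pinned down by its Reeb vector field together with the fixed CR data $(\mathcal D,J)$.
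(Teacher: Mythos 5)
Your proof is correct and follows the same overall architecture as the paper's: produce a Sasakian structure in ${\mathcal S}(\mathcal D,J)$ whose Reeb vector field is central in $\mathfrak{cr}(\mathcal D,J)$, join it to an arbitrary $\xi_1$ by the line segment in the convex cone $\mathfrak{cr}^+(\mathcal D,J)$, and take $T$ to be the torus obtained as the closure of the connected abelian subgroup whose Lie algebra contains $\xi_0$ and $\xi_1$. The one place you genuinely diverge is in producing the central element: the paper invokes \cite[Proposition~4.4]{BGS}, averaging the contact form $\eta$ over $G=\mathfrak{CR}_0(\mathcal D,J)$ to obtain a $G$-invariant structure $(\eta_0,g_0)$ with $\AUT(\eta_0,g_0)_0=G$, whose Reeb field is then automatically $\Ad(G)$-fixed; you instead average the Reeb field itself in the finite-dimensional Lie algebra, after checking that $\mathfrak{cr}^+(\mathcal D,J)$ is $\Ad(G)$-invariant (via $a^*\eta=\lambda_a\eta$ with $\lambda_a>0$ forced by positivity of the Levi form) and that the barycenter of a compact $\Ad(G_0)$-orbit stays inside the open convex cone. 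The two averages yield the same conclusion; your version has the merit of being self-contained where the paper's is a citation, and your explicit verification of condition (1) --- that each $a\in T$ fixes $\xi_s$ and hence $\eta_s=\eta/\eta(\xi_s)$ and $g_s$ --- spells out what the paper leaves implicit. I see no gaps.
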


\begin{proof}
By~\cite[Proposition 4.4]{BGS} and its proof, averaging the contact form $\eta$ with the action of the identity component $G=\mathfrak{CR}_0(\mathcal D,J)$ of $\mathfrak{CR}(\mathcal D,J)$ gives a new $G$-invariant Sasakian structure $(\eta_{0},g_{0})$ in ${\mathcal S}(\mathcal D,J)$ such that the identity component of $\AUT(\eta_0,g_0)$ is $G$. Therefore, the Reeb vector field $\xi_{0}$ of $(\eta_{0},g_{0})$ belongs to $\mathfrak{cr}^+(\mathcal D,J)$ and is contained in the center of $\mathfrak{cr}(\mathcal D,J)$. Now for any Sasakian structure $(\eta_{1},g_{1})$ in ${\mathcal S}(\mathcal D,J)$ and the corresponding element $\xi_{1} \in\mathfrak{cr}^+(\mathcal D,J)$, let $T\subset G$ be a torus whose Lie algebra contains the span of $\xi_{0}$ and $\xi_{1}$. The line segment from $\xi_{0}$ to $\xi_{1}$ lies in $\mathfrak{cr}^+(\mathcal D,J)$ by convexity, and for the corresponding family of Sasakian structures $(\eta_s,g_s)$ on $M$ we have $T\subset \AUT(\eta_s,g_s)$. This concludes the proof.
\end{proof}

\subsection{The statement of the main results of Section~\ref{sec:invarianceofhpq}}\label{sec:st}

Sections~\ref{sec:reduction} through~\ref{sec:proofofuppersemic} will be devoted to proving the following rigidity theorem:

\begin{thm}\label{thm:Invariance3}
Let $\{(\eta_{s},g_{s})\}_{0 \leq s \leq 1}$ be a smooth family of Sasakian structures on a compact manifold $M$ with characteristic foliations $\F_{s}$. Assume that there exists a smooth family of actions $\{\phi_s:T\to\AUT(\eta_s,g_s)\}_{0 \leq s \leq 1}$ of a torus $T$ on $M$ which contains the Reeb flow of $\eta_{s}$ as a one-parameter subgroup for any $s$. Then the basic Hodge numbers of the Reeb flows of $(\eta_{s},g_{s})$ are independent of $s$.
\end{thm}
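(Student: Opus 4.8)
The plan is to interpose between the discontinuously-varying basic Dolbeault complexes a family of strongly elliptic complexes defined on the full space of smooth forms, so that Kodaira--Spencer deformation theory applies, and then to transfer the resulting semicontinuity statements back to basic cohomology. First I would set up the ambient geometric objects: for each $s$, the torus $T$ acts on $M$ preserving $(\eta_s,g_s)$ and contains the Reeb flow $\xi_s$ as a one-parameter subgroup. Restricting attention to $T$-invariant forms (via averaging, which is harmless at the level of cohomology by standard arguments), one reduces to a setting where $\mathcal F_s$ is the orbit foliation of a torus action and the conormal bundle $\nu^*\mathcal F_s$ varies smoothly in $s$ even though the \emph{basic} forms jump. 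Following El~Kacimi-Alaoui and Hector, one then chooses, smoothly in $s$, a splitting $TM = T\mathcal F_s \oplus Q_s$ with $Q_s$ carrying the transverse complex structure $J_s$ coming from Lemma~\ref{lem:ReebtrKaehler}, and forms the bundle of transverse $(p,q)$-forms $\wedge^{p,q} Q_s^* \otimes \CC \subset \wedge^\bullet T^*M \otimes \CC$.

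Next I would extend the basic Laplacian $\Delta_b$ to a second-order operator $\widetilde\Delta_s$ acting on all smooth sections of $\wedge^{p,q} Q_s^*\otimes\CC$ which is \emph{strongly elliptic} (in the Kodaira--Spencer sense), whose kernel on $T$-invariant, leafwise-constant sections is exactly the basic harmonic space $\mathcal H^{p,q}(M,\mathcal F_s)$, and which depends smoothly on $s$; the construction adds to $\Delta_b$ a term that is elliptic along the leaves (for instance the leafwise Laplacian of $g_s$), as in El~Kacimi-Alaoui's treatment, so that the combined operator is elliptic on $M$. Since the bundles $\wedge^{p,q}Q_s^*\otimes\CC$ and the operators $\widetilde\Delta_s$ form a smooth family of strongly elliptic operators on the fixed compact manifold $M$, the dimension of $\ker\widetilde\Delta_s$ is upper semicontinuous in $s$ by the standard semicontinuity theorem of Kodaira--Spencer. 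The point is then to show: (i) $\dim\ker\widetilde\Delta_s$ actually equals $\sum_{p,q} h^{p,q}(M,\mathcal F_s)$ (more precisely, one runs the argument bidegree by bidegree, so $\dim\ker\widetilde\Delta_s|_{\wedge^{p,q}} = h^{p,q}(M,\mathcal F_s)$), using Theorem~\ref{thm:EKA} to identify basic harmonic forms with basic Dolbeault cohomology; and (ii) the total basic cohomology dimension $\sum_k b^k(M,\mathcal F_s) = \sum_{p,q} h^{p,q}(M,\mathcal F_s)$ (Theorem~\ref{thm:hodgedecomp}, using homological orientability of the characteristic foliation) is \emph{constant} in $s$ --- this last fact follows from the rigidity of the basic Betti numbers of Sasakian structures \cite[Theorem~7.4.14]{Boyer Galicki} (or can be re-derived from localization). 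Combining upper semicontinuity of each $h^{p,q}$ with constancy of their sum forces each $h^{p,q}(M,\mathcal F_s)$ to be locally constant, hence constant on $[0,1]$.

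The main obstacle I expect is step (i): the identification of $\ker\widetilde\Delta_s$ with the basic harmonic space, i.e.\ proving that every $\widetilde\Delta_s$-harmonic section of $\wedge^{p,q}Q_s^*\otimes\CC$ is automatically basic (leafwise constant and annihilated by $\iota_{\xi_s}$, $\mathcal L_{\xi_s}$), so that no spurious "transverse-but-not-basic" harmonic forms inflate the kernel. This requires a Weitzenböck/integration-by-parts argument showing the leafwise-elliptic correction term vanishes precisely on basic forms, together with care that the correction is compatible with the bidegree decomposition so the count can be done in each $(p,q)$ separately; this is exactly the "additional argument needed to go back from transverse forms to basic forms" flagged in the introduction. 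A secondary technical point is ensuring the splitting $Q_s$, the metrics, and the extended operators can all be chosen to depend smoothly (not just continuously) on $s$ — this is routine but must be done carefully, e.g.\ by averaging over $T$ and using that $T\subset\AUT(\eta_s,g_s)$ for all $s$ by hypothesis. Once these are in place, the deformation-theoretic semicontinuity plus the Betti-number rigidity close the argument.
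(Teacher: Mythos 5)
Your proposal follows the same architecture as the paper's proof: (a) reduce to upper semicontinuity of each $h^{p,q}(M,\F_s)$ using the constancy of the basic Betti numbers \cite[Theorem~7.4.14]{Boyer Galicki} together with the basic Hodge decomposition (Theorem~\ref{thm:hodgedecomp}) --- this is exactly Section~\ref{sec:reduction}; (b) extend $\Delta_b$ to a strongly elliptic self-adjoint operator on all transverse $(p,q)$-forms by adding a leafwise term --- the paper takes $D=\mathcal L_{\xi}\mathcal L_{\xi}-\Delta_b$, which is your ``leafwise Laplacian'' correction up to sign, since $-\mathcal L_\xi^2=\mathcal L_\xi^*\mathcal L_\xi$; (c) invoke Kodaira--Spencer. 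The one genuine divergence is the endgame. You propose to identify $\ker D_s$ \emph{outright} with the basic harmonic space and then apply plain semicontinuity of $\dim\ker$; the paper instead only establishes $h^{p,q}(M,\F_s)=\dim(\ker D_s)^H$ for the compact group $H$ containing the Reeb flow, and obtains semicontinuity of this $H$-invariant dimension by a finer argument: it tracks the $H$-representation on the span $\E_s$ of the low eigensections (Lemma~\ref{lem:E_s}) and uses that representations of a compact Lie group are rigid under smooth deformation. Your route does appear to close: by the paper's own adjointness lemma, $\Delta_b=\bdel\,\bdel^*+\bdel^*\bdel\geq 0$ on all of $\Omega^{p,q}$, and $\mathcal L_\xi$ is skew-symmetric, so $-D=\Delta_b+\mathcal L_\xi^*\mathcal L_\xi$ is a sum of two non-negative self-adjoint operators and $\ker D=\ker\mathcal L_\xi\cap\ker\bdel\cap\ker\bdel^*$, which (since sections of $E^{p,q}$ are automatically horizontal) is precisely the space of $\bdel_b$-harmonic basic $(p,q)$-forms. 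This is exactly the ``go back from transverse to basic forms'' step you flag as the main obstacle, and it is the step the paper deliberately sidesteps by passing to $H$-invariants.

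Two cautions. First, you defer rather than execute the crucial identification; as written your proof is a plan whose crux is the unproven step (i), and the correction term must be the Lie-derivative square $-\mathcal L_\xi^2$ (whose kernel is the flow-invariant sections) rather than a covariant leafwise Laplacian $\nabla_\xi^*\nabla_\xi$ (whose kernel is the leafwise-parallel sections) for the kernel to coincide with the basic forms. Second, note that your version of the argument makes no essential use of the torus $T$ beyond smoothness of the family of Sasakian structures: if it is fully correct, it proves invariance of the $h^{p,q}$ under \emph{arbitrary} smooth deformations, which the authors explicitly pose as an open question. That does not by itself mean your argument is wrong --- the positivity identity above really does seem to follow from the paper's Lemma on $\bdel^*$ --- but it is a strong signal that you should write out step (i) in complete detail before trusting the conclusion, since the authors chose the more conservative representation-theoretic route precisely at this point.
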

\begin{rem}
We could have also stated the theorem for an arbitrary compact, connected Lie group instead of a torus. But because the Reeb flow of each $\eta_s$ would then be contained in the center of the Lie group, this would not be more general.
\end{rem}

This theorem is trivial for Sasakian structures on real cohomology $(2n+1)$-spheres because of the following well-known proposition.
\begin{prop}\label{prop:sp}
For any Sasakian structure on a real cohomology $(2n+1)$-sphere with characteristic foliation $\F$, we get
\begin{equation}
h^{p,q}(M,\F) =
\begin{cases}
1 & \text{if}\,\,\,\, 0 \leq p=q \leq n\;, \\
0 & \text{otherwise}\;.
\end{cases}
\end{equation}
\end{prop}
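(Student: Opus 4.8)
The plan is to use the transverse Hodge theory of Section~\ref{sec:HodgetrKaehler} together with the basic version of the classical Hodge decomposition. Since a real cohomology $(2n+1)$-sphere $M$ is in particular a rational homology sphere, the Gysin-type argument for $K$-contact manifolds (or a direct spectral-sequence computation) shows that the basic cohomology $H^\bullet(M,\F)$ of the characteristic foliation coincides with the cohomology of $\CC P^n$; that is, $b^{2k}(M,\F)=1$ for $0\le k\le n$ and $b^{2k+1}(M,\F)=0$. Concretely, one uses the long exact (Gysin) sequence relating $H^\bullet(M)$, $H^\bullet(M,\F)$, and the multiplication by the Euler class $[d\eta]\in H^2(M,\F)$: from $H^j(M)=0$ for $0<j<2n+1$ one deduces that $\wedge[d\eta]\colon H^{j}(M,\F)\to H^{j+2}(M,\F)$ is an isomorphism for $0\le j\le 2n-2$, and combined with $H^0(M,\F)=\CC$ and homological orientability $H^{2n}(M,\F)=\CC$ this pins down all the basic Betti numbers.

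Next I would invoke the basic Hodge decomposition of Theorem~\ref{thm:hodgedecomp}: since $\F$ is transversely K\"ahler (Lemma~\ref{lem:ReebtrKaehler}) and homologically orientable (as noted after Theorem~\ref{thm:hodgedecomp}, because $[d\eta]^n\neq 0$ in $H^n(M,\F)$), we have $b^k(M,\F)=\sum_{p+q=k}h^{p,q}(M,\F)$, and moreover the basic Hodge $\star$-operator and the basic K\"ahler identities give the symmetries $h^{p,q}(M,\F)=h^{q,p}(M,\F)$ and $h^{p,q}(M,\F)=h^{n-p,n-q}(M,\F)$, just as on a compact K\"ahler manifold. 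From $b^{2k+1}(M,\F)=0$ we immediately get $h^{p,q}(M,\F)=0$ whenever $p+q$ is odd. For even total degree $2k$ we know $\sum_{p+q=2k}h^{p,q}(M,\F)=1$, so exactly one summand is $1$ and the rest vanish; it remains to show the nonzero one is $h^{k,k}$.

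To identify which summand survives, I would use the basic Lefschetz structure: wedging with the transverse K\"ahler class $[d\eta]\in H^{1,1}(M,\F)$ sends $H^{p,q}(M,\F)$ into $H^{p+1,q+1}(M,\F)$, and by the argument above this map is an isomorphism in the relevant range of degrees. Since $H^{0,0}(M,\F)=\CC$ is spanned by the constants, repeatedly wedging with $[d\eta]$ produces nonzero classes in $H^{k,k}(M,\F)$ for every $0\le k\le n$; as each $H^{2k}(M,\F)$ is one-dimensional this forces $h^{k,k}(M,\F)=1$ and hence all other $h^{p,q}$ with $p+q=2k$, $p\neq q$, to vanish. This yields exactly the stated formula.

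The main obstacle is the first step: establishing that the basic Betti numbers of $(M,\F)$ agree with those of $\CC P^n$. One has to make the Gysin sequence argument precise in the foliated/$K$-contact setting — in particular justifying the exactness of the sequence $\cdots\to H^{j-2}(M,\F)\xrightarrow{\wedge[d\eta]} H^j(M,\F)\to H^j(M)\to\cdots$ and its termination, which relies on $K$-contactness and compactness. Once this (which is essentially \cite[Theorem~7.4.14]{Boyer Galicki}, specialized to the sphere) is in hand, the rest is a routine application of transverse K\"ahler Hodge and Lefschetz theory.
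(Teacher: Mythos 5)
Your proposal is correct and follows essentially the same route as the paper: the Gysin sequence for the characteristic foliation of a Sasakian structure on a real cohomology sphere identifies $H^{\bullet}(M,\F)$ with $\CC[z]/(z^{n+1})$ where $z=[d\eta]$, and since $d\eta$ has bidegree $(1,1)$ the nonzero classes all sit in $H^{k,k}$. The paper states this in two lines; your version merely spells out the intermediate bookkeeping (basic Betti numbers, Hodge decomposition, powers of the transverse K\"ahler class), which is a faithful elaboration rather than a different argument.
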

\begin{proof}
By the Gysin sequence~\cite{Saralegui} (Equation~(7.2.1) on p.~215 of \cite{Boyer Galicki}) for the characteristic foliation of any Sasakian structure on $M$, we get $H^{\bullet,\bullet}(M,\F)=\CC[z]/(z^{n+1})$, where $z$ corresponds to $d\eta$. Since $d\eta$ is of bidegree $(1,1)$, the result follows.
\end{proof}

We obtain the following corollary of Theorem~\ref{thm:Invariance3}:

\begin{thm}\label{thm:invtypeI}
Two Sasakian structures on a compact manifold with the same underlying CR structures have the same basic Hodge numbers.
\end{thm}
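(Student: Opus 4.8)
The plan is to deduce this statement from Theorem~\ref{thm:Invariance3} with the help of Lemma~\ref{lem:cone}, after separating off the case of the standard CR sphere. Fix a compact manifold $M$ and two Sasakian structures on $M$ with a common underlying CR structure $(\mathcal D,J)$. Both then belong to the set $\mathcal S(\mathcal D,J)$, so it suffices to show that the basic Hodge numbers $h^{p,q}(M,\F)$ are constant as $(\eta,g)$ ranges over $\mathcal S(\mathcal D,J)$. (If $M$ is disconnected one argues componentwise, so we may assume $M$ connected, of dimension $2n+1$.)

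Suppose first that $(M,\mathcal D,J)$ is \emph{not} CR diffeomorphic to the sphere $S^{2n+1}$ with its standard CR structure. Then $\mathfrak{CR}(\mathcal D,J)$ is compact by Schoen's theorem~\cite{Schoen}, so Lemma~\ref{lem:cone} applies and yields a distinguished Sasakian structure $(\eta_{0},g_{0})\in\mathcal S(\mathcal D,J)$ with the property that every $(\eta_{1},g_{1})\in\mathcal S(\mathcal D,J)$ can be joined to it by a smooth family $\{(\eta_{s},g_{s})\}_{0\le s\le 1}$ in $\mathcal S(\mathcal D,J)$ and a torus $T\subset\mathfrak{CR}(\mathcal D,J)$ such that $T\subset\AUT(\eta_{s},g_{s})$ for every $s$ and the Reeb flow of $\eta_{s}$ is a one-parameter subgroup of the $T$-action. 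Taking $\phi_{s}\colon T\to\AUT(\eta_{s},g_{s})$ to be the constant (hence smooth) family of inclusions, Theorem~\ref{thm:Invariance3} shows that the basic Hodge numbers are independent of $s$ along each such family. Consequently every structure in $\mathcal S(\mathcal D,J)$ has the same basic Hodge numbers as $(\eta_{0},g_{0})$, and in particular the two given structures have the same basic Hodge numbers.

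It remains to treat the excluded case, in which $(M,\mathcal D,J)$ is CR diffeomorphic to the standard CR sphere: then $M$ is diffeomorphic to $S^{2n+1}$, hence a real cohomology sphere, and Proposition~\ref{prop:sp} already determines $h^{p,q}(M,\F)$ for every Sasakian structure on $M$, so the assertion is immediate. Finally, the (equivalent) formulation for \emph{isomorphic} rather than identical CR structures follows at once: a diffeomorphism $f$ of $M$ carrying the CR structure underlying one of the two Sasakian structures to that underlying the other maps the first characteristic foliation, together with its transverse holomorphic structure, to the second, so $f^{*}$ induces isomorphisms between the corresponding basic Dolbeault cohomology groups, reducing the claim to the case of a common CR structure just handled. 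The only input beyond Theorem~\ref{thm:Invariance3} is the compactness of $\mathfrak{CR}(\mathcal D,J)$ needed to invoke Lemma~\ref{lem:cone}, so there is no essential obstacle at this stage; all of the substantive work lies in the proof of Theorem~\ref{thm:Invariance3}.
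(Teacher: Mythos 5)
Your proof is correct and follows essentially the same route as the paper: dispose of the standard CR sphere via Proposition~\ref{prop:sp}, and otherwise combine Schoen's compactness theorem with Lemma~\ref{lem:cone} and Theorem~\ref{thm:Invariance3}. Your case split on "CR diffeomorphic to the standard sphere" is in fact the slightly more precise form of the dichotomy that Schoen's theorem requires, but this is only a cosmetic refinement of the paper's argument.
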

\begin{proof}
If $M$ is diffeomorphic to $S^{2n+1}$, then the statement follows from Proposition~\ref{prop:sp}. Assume now that $M$ is not diffeomorphic to $S^{2n+1}$. Then, as we have already remarked, the CR diffeomorphism group of the underlying CR structure is a compact Lie group. So Lemma~\ref{lem:cone} and Theorem~\ref{thm:Invariance3} imply the result.
\end{proof}

As already noted in the introduction, Theorem~\ref{thm:invtypeI} is equivalent to the following:
\begin{thm}\label{thm:defI}
The basic Hodge numbers of a compact Sasakian manifold are invariant under deformations of type I.
\end{thm}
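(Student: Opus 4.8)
The plan is to deduce Theorem~\ref{thm:defI} from Theorem~\ref{thm:invtypeI}, showing the two statements are equivalent as the introduction announces. The bridge between them is the description of ${\mathcal S}(\mathcal D,J)$ recalled in Section~\ref{sec:cone}: by \cite[Lemma~6.4]{BGS}, sending a Sasakian structure to its Reeb vector field identifies ${\mathcal S}(\mathcal D,J)$ with the convex cone $\mathfrak{cr}^+(\mathcal D,J)$, and a \emph{deformation of type I} is by definition a smooth family of Sasakian structures all contained in a single space ${\mathcal S}(\mathcal D,J)$ of Sasakian structures with a fixed underlying CR structure $(\mathcal D,J)$.

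First I would treat the implication ``Theorem~\ref{thm:invtypeI} $\Rightarrow$ Theorem~\ref{thm:defI}'', which is immediate. Let $\{(\eta_{s},g_{s})\}_{0\le s\le1}$ be a deformation of type I with characteristic foliations $\F_{s}$; by definition all the $(\eta_{s},g_{s})$ share the underlying CR structure $(\mathcal D,J)$. Applying Theorem~\ref{thm:invtypeI} to the pair $(\eta_{0},g_{0})$ and $(\eta_{s},g_{s})$, which have the same CR structure, gives $h^{p,q}(M,\F_{s})=h^{p,q}(M,\F_{0})$ for all $s$ and all $(p,q)$. Hence the basic Hodge numbers are constant along the deformation, which is exactly the statement of Theorem~\ref{thm:defI}.

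For completeness I would also record the converse, which is the substantive direction and makes the equivalence meaningful. Given two Sasakian structures $(\eta_{0},g_{0}),(\eta_{1},g_{1})\in{\mathcal S}(\mathcal D,J)$ with Reeb vector fields $\xi_{0},\xi_{1}\in\mathfrak{cr}^+(\mathcal D,J)$, the segment $\xi_{s}=(1-s)\xi_{0}+s\xi_{1}$ remains in $\mathfrak{cr}^+(\mathcal D,J)$ by convexity, and under the bijection of \cite[Lemma~6.4]{BGS} it corresponds to a smooth family $(\eta_{s},g_{s})$ in ${\mathcal S}(\mathcal D,J)$, i.e.\ a deformation of type I joining the two given structures. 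Therefore invariance of the basic Hodge numbers under deformations of type I forces them to agree for $(\eta_{0},g_{0})$ and $(\eta_{1},g_{1})$, which proves Theorem~\ref{thm:invtypeI}; the two implications together yield Theorem~\ref{thm:defI}. I do not expect a genuine obstacle here: the analytic heart of the matter --- the Kodaira-Spencer argument on transverse forms --- is already contained in Theorem~\ref{thm:Invariance3} and hence in Theorem~\ref{thm:invtypeI}, and the only point needing a little care is to match whatever bookkeeping definition of ``deformation of type I'' one adopts with the cone picture, which the identification ${\mathcal S}(\mathcal D,J)\cong\mathfrak{cr}^+(\mathcal D,J)$ handles at once.
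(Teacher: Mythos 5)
Your proposal is correct and matches the paper, which simply asserts that Theorem~\ref{thm:defI} is equivalent to Theorem~\ref{thm:invtypeI} (a deformation of type I stays inside a single ${\mathcal S}(\mathcal D,J)$, and conversely any two structures in ${\mathcal S}(\mathcal D,J)$ are joined by such a deformation via convexity of $\mathfrak{cr}^+(\mathcal D,J)$). You have merely spelled out the equivalence that the paper leaves implicit.
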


We have the following strengthening of Theorem~\ref{thm:Invariance3}.
\begin{thm}\label{thm:Invariance2}
Let $\{(\eta_{s},g_{s})\}_{0 \leq s \leq 1}$ be a smooth family of Sasakian structures on a compact manifold $M$ with CR structures $(\cD_s,J_s)$ and characteristic foliations $\F_{s}$. Assume that there exists a smooth family of actions $\{\phi_s:H\to\CR(\cD_s,J_s)\}_{0 \leq s \leq 1}$ of a compact connected Lie group $H$ on $M$ which for each $s$ contains the Reeb flow of $\eta_{s}$ as a one-parameter subgroup. Then the $h^{p,q}(M,\F_{s})$ are independent of $s$.
\end{thm}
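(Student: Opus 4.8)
The idea is to reduce Theorem~\ref{thm:Invariance2} to Theorem~\ref{thm:Invariance3} by an averaging argument, exploiting the fact that only the CR structure, not the full Sasakian structure, is required to be $H$-invariant in the hypothesis. First I would recall from Section~\ref{sec:cone} (following~\cite{BGS}) that a Sasakian structure $(\eta_s,g_s)$ is determined by the CR structure $(\cD_s,J_s)$ together with the Reeb vector field $\xi_s$, and that $\xi_s\in\mathfrak{cr}^+(\cD_s,J_s)$. Since $H$ acts on $M$ preserving $(\cD_s,J_s)$ and the Reeb flow of $\eta_s$ is a one-parameter subgroup of the $H$-action, the vector field $\xi_s$ is fixed by $\Ad(H)$, hence lies in the center of $\mathfrak{cr}(\cD_s,J_s)$ in the relevant sense; in particular $\xi_s$ is $H$-invariant. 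The key point is then to replace each $(\eta_s,g_s)$ by an $H$-\emph{invariant} Sasakian structure $(\eta_s',g_s')$ in $\mathcal{S}(\cD_s,J_s)$ with the \emph{same} Reeb vector field $\xi_s$, obtained by averaging $\eta_s$ over $H$ (as in the proof of Lemma~\ref{lem:cone}, using \cite[Proposition 4.4]{BGS} and its proof): because $\xi_s$ is already $H$-invariant, the averaged contact form $\eta_s'=\int_H h^*\eta_s\,dh$ still satisfies $\eta_s'(\xi_s)=1$ and $\iota_{\xi_s}d\eta_s'=0$, so $\xi_s$ remains the Reeb vector field, the underlying CR structure is unchanged, and the characteristic foliation $\F_s$ is unchanged. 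Hence $h^{p,q}(M,\F_s)$ is unaffected by this replacement.

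Next I would observe that the family $\{(\eta_s',g_s')\}$ is still smooth in $s$ (averaging is a continuous/smooth linear operation and depends smoothly on the smoothly varying data), and now $H$ acts on $M$ preserving each $(\eta_s',g_s')$, i.e.\ $\phi_s:H\to\AUT(\eta_s',g_s')$. Since the Reeb flow of $\eta_s'$ is a one-parameter subgroup of this $H$-action and lies in its center, it is contained in a maximal torus $T\subset H$, and after conjugating (using that all maximal tori are conjugate, and that by smoothness/connectedness of the family one can choose the conjugation to depend smoothly on $s$, or simply arguing locally in $s$ and then patching) we may assume the Reeb flow lies in a fixed torus $T\subset H$ with $T\subset\AUT(\eta_s',g_s')$ for all $s$. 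At this point the hypotheses of Theorem~\ref{thm:Invariance3} are satisfied for the family $\{(\eta_s',g_s')\}$, so the basic Hodge numbers of its characteristic foliations are independent of $s$; since these foliations coincide with $\F_s$, we conclude that $h^{p,q}(M,\F_s)$ is independent of $s$.

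The main obstacle I expect is the smoothness bookkeeping: one must verify that the averaged structures $(\eta_s',g_s')$ form a genuinely smooth family (including that $g_s'$ is obtained compatibly, e.g.\ via $g_s'(X,Y)=d\eta_s'(X,J_s Y)$ on $\cD_s$ plus $\eta_s'\otimes\eta_s'$, so that it is automatically Sasakian with Reeb field $\xi_s$ — this uses that averaging a Sasakian contact form over a CR-preserving group action yields a Sasakian structure, which is exactly the content invoked in Lemma~\ref{lem:cone}), and that the torus $T$ containing all the Reeb flows can be taken fixed along the family. The torus issue can be handled by noting that the Reeb flows $\{\xi_s\}$ trace out a smooth curve in $\mathfrak{h}=\Lie(H)$ lying in a single Cartan subalgebra only after possible reparametrization; in general one covers $[0,1]$ by finitely many intervals on each of which a common torus works, uses Theorem~\ref{thm:Invariance3} on each subinterval, and concatenates. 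Apart from these routine-but-necessary checks, the reduction to Theorem~\ref{thm:Invariance3} is immediate.
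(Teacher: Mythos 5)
Your overall strategy---average $\eta_s$ over $H$ to obtain an $H$-invariant Sasakian structure with the same underlying CR structure, then feed the result into Theorem~\ref{thm:Invariance3}---is the same as the paper's. However, there is a genuine gap at the crucial step. You assert that $\xi_s$ is fixed by $\Ad(H)$, hence $H$-invariant, so that the averaged form $\eta_s'$ has the same Reeb vector field and the same characteristic foliation as $\eta_s$, and therefore trivially the same basic Hodge numbers. This is false in general: the hypothesis only places the Reeb flow inside $\phi_s(H)\subset\CR(\cD_s,J_s)$, and since elements of $H$ need not preserve $\eta_s$, they need not preserve $\xi_s$; equivalently, the generator of the Reeb flow in $\Lie(H)$ need not be central. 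A concrete counterexample to your claim: let $M=S^{2n+1}$ with its standard CR structure, $H=\U(n+1)$, and let $\eta_s$ be a weighted Sasakian structure whose Reeb field is the fundamental vector field of $\mathrm{diag}(ia_0,\dots,ia_n)$ with distinct positive $a_j$. All hypotheses of Theorem~\ref{thm:Invariance2} hold, yet this element is not central in $\mathfrak{u}(n+1)$, and averaging $\eta_s$ over $\U(n+1)$ produces a constant multiple of the round contact form, whose characteristic foliation is the Hopf fibration rather than $\F_s$. So the foliation genuinely changes under averaging, and your justification that $h^{p,q}$ is unaffected collapses. (Had centrality of the Reeb flow been part of the hypothesis, the theorem would carry little extra content over Theorem~\ref{thm:Invariance3}.)

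The missing ingredient is precisely Theorem~\ref{thm:invtypeI}: $(\eta_s,g_s)$ and the averaged $(\eta_s',g_s')$ have the same underlying CR structure $(\cD_s,J_s)$, so the rigidity theorem---itself a nontrivial consequence of Lemma~\ref{lem:cone} and Theorem~\ref{thm:Invariance3}---gives $h^{p,q}(M,\F_s)=h^{p,q}(M,\F_s')$ for each fixed $s$ even though the two foliations differ. This is exactly how the paper bridges the step you tried to get for free. After that, the paper applies Theorem~\ref{thm:Invariance3} to the action of the center of $H$ on the family $(\eta_s',g_s')$, which avoids your subinterval-patching argument for choosing a torus; your smoothness remarks about the averaging are correct but secondary. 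With the appeal to Theorem~\ref{thm:invtypeI} inserted, your reduction goes through and coincides with the paper's proof.
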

\begin{proof}[Proof of Theorem~\ref{thm:Invariance2} by Theorem~\ref{thm:Invariance3}]
By averaging each $\eta_{s}$ under the respective $H$-action, we get a smooth family $\{(\eta_{s}',g'_{s})\}_{0 \leq s \leq 1}$ of $H$-invariant Sasakian structures on $M$ such that the underlying CR structure of $(\eta_{s}',g'_{s})$ is the same as $(\eta_{s},g_{s})$ for any $s$ (see~\cite[the proof of Proposition~4.4]{BGS}). Theorem~\ref{thm:invtypeI} therefore implies that the basic Hodge numbers of $(\eta_s,g_s)$ and $(\eta_s',g_s')$ are the same. Theorem~\ref{thm:Invariance3}, applied to the action of the center of $H$, implies that the basic Hodge numbers of the Reeb flows of $(\eta_s',g'_s)$ are independent of $s$.
\end{proof}

We will reduce the proof of Theorem~\ref{thm:Invariance3} in Section~\ref{sec:reduction} to showing the upper semi-continuity of the basic Hodge numbers. In Section~\ref{sec:extension} we will prepare for the proof of the upper-semicontinuity by extending the basic Laplacian of the characteristic foliation (which is naturally defined on basic forms) to a self-adjoint elliptic operator on some Hermitian vector bundle in such a way that the basic Hodge numbers are still encoded in its kernel, see Equation~\eqref{eq:harmonicinvhodge}. This allows us to apply a slight modification of the classical deformation theory of Kodaira-Spencer in Section~\ref{sec:proofofuppersemic} to obtain the desired upper-semicontinuity.

The following is a consequence of the proof of Theorem~\ref{thm:Invariance3}, and will be shown in Section~\ref{sec:DeformHodgedecomp}.

\begin{thm}\label{thm:Hodge}
Within the space of Sasakian structures with a fixed CR structure on a compact manifold $M$, the Hodge decomposition $H^k(M,\F)=\bigoplus_{p+q=k}H^{p,q}(M,\F)$ smoothly depends on the corresponding Reeb vector field.
\end{thm}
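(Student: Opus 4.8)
The plan is to read this off from the machinery built for Theorem~\ref{thm:Invariance3}, upgrading the equality of dimensions proved there to a genuine smoothness statement. Recall that in Sections~\ref{sec:extension} and~\ref{sec:proofofuppersemic} the basic Laplacians $\Delta_{b}$ of the characteristic foliations $\F_{s}$ — which a priori live on the \emph{discontinuously} varying complexes $\Omega^{\bullet}(M,\F_{s})$ — are replaced by a smooth family of strongly elliptic, formally self-adjoint operators $\widetilde{\Delta}_{s}$ acting on sections of \emph{one fixed} Hermitian vector bundle $E$ over $M$ (the relevant exterior powers of the complexified conormal bundles, rigidified into an $s$-independent bundle with an $s$-dependent, but smoothly varying, bidegree splitting $E=\bigoplus_{p,q}E^{p,q}$). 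The operators $\widetilde{\Delta}_{s}$ depend smoothly on $s$, preserve the bidegree decomposition, and Theorem~\ref{thm:Invariance3} shows that the dimensions of their kernels $\ker\bigl(\widetilde{\Delta}_{s}|_{\Gamma(E^{p,q})}\bigr)$ are independent of $s$; moreover Equation~\eqref{eq:harmonicinvhodge} identifies these kernels with the spaces $\mathcal{H}^{p,q}(M,\F_{s})$ of basic $\Delta_{b}$-harmonic $(p,q)$-forms, which by Theorem~\ref{thm:EKA} represent $H^{p,q}(M,\F_{s})$. Since smoothness of the Hodge decomposition over the open cone $\mathfrak{cr}^{+}(\mathcal{D},J)$ is a local statement in the Reeb vector field, it suffices to treat a one-parameter family as above.

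First I would invoke the standard deformation theory of elliptic operators (Kodaira--Spencer) in the form: fix Sobolev completions of $\Gamma(E^{p,q})$, pick $\varepsilon>0$ smaller than the least nonzero eigenvalue of $\widetilde{\Delta}_{0}|_{\Gamma(E^{p,q})}$, and form the spectral projections
\[
P_{s}^{p,q}=\frac{1}{2\pi i}\oint_{|\lambda|=\varepsilon}\bigl(\lambda-\widetilde{\Delta}_{s}|_{\Gamma(E^{p,q})}\bigr)^{-1}\,d\lambda .
\]
Because $s\mapsto\widetilde{\Delta}_{s}$ is smooth and, by Theorem~\ref{thm:Invariance3}, the dimension of the eigenspace at $0$ stays constant, the $P_{s}^{p,q}$ form a smooth family of finite-rank projections; by elliptic regularity their images lie in $\Gamma(E^{p,q})$ and vary smoothly in the $C^{\infty}$-topology. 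Hence $s\mapsto\ker\bigl(\widetilde{\Delta}_{s}|_{\Gamma(E^{p,q})}\bigr)$ is a smooth finite-dimensional subbundle of the trivial bundle $[0,1]\times\Gamma(E)$, and summing over $p+q=k$ shows the decomposition $\ker\widetilde{\Delta}_{s}=\bigoplus_{p+q=k}\ker\bigl(\widetilde{\Delta}_{s}|_{\Gamma(E^{p,q})}\bigr)$ in each degree $k$ is a smooth decomposition of subbundles.

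It then remains to transport this smooth structure back into $\Omega^{\bullet}(M)$ through the identification of Equation~\eqref{eq:harmonicinvhodge}. The passage from transverse forms to basic forms in the proof of Theorem~\ref{thm:Invariance3} realizes $\mathcal{H}^{p,q}(M,\F_{s})$ as the image of $\ker\bigl(\widetilde{\Delta}_{s}|_{\Gamma(E^{p,q})}\bigr)$ under a map assembled from the averaging projection $\Omega^{\bullet}(M)\to\Omega^{\bullet}(M,\F_{s})$ and the inclusion of basic forms among transverse forms (the El~Kacimi-Alaoui--Hector construction). I would verify that this map, and hence the resulting family $\mathcal{H}^{p,q}(M,\F_{s})\subset\Omega^{p+q}(M)$, depends smoothly on $s$ as well. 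Granting this, $\mathcal{H}^{k}(M,\F_{s})=\bigoplus_{p+q=k}\mathcal{H}^{p,q}(M,\F_{s})$ is a differentiable family of subspaces of $\Omega^{k}(M)$, and composing with the isomorphisms $\mathcal{H}^{p,q}(M,\F_{s})\cong H^{p,q}(M,\F_{s})$ and $\mathcal{H}^{k}(M,\F_{s})\cong H^{k}(M,\F_{s})$ from Theorems~\ref{thm:EKA} and~\ref{thm:hodgedecomp} gives the asserted smooth dependence of the Hodge decomposition on the Reeb vector field; this will be carried out in Section~\ref{sec:DeformHodgedecomp}.

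The hard part is precisely this last transfer across the transverse/basic divide. The basic de Rham complex $\Omega^{\bullet}(M,\F_{s})$ jumps discontinuously with $s$ — that is the whole reason the transverse-form detour is needed — so "smoothly varying family of harmonic subspaces" only makes literal sense inside the fixed ambient space $\Omega^{\bullet}(M)$, and one must check that the projection used to land back in basic harmonic forms is compatible with the $s$-dependence of the entire construction; Theorem~\ref{thm:Invariance3} itself records only the equality of dimensions, so this compatibility is a genuine additional point. I expect the cleanest route is to run the spectral-projection argument once and for all on the Sobolev completions of $\Gamma(E)$, deduce smoothness there, and invoke elliptic regularity together with the identification~\eqref{eq:harmonicinvhodge} only at the very end.
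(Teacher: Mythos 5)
Your proposal is correct and follows essentially the same route as the paper: apply Kodaira--Spencer to the smooth family of strongly elliptic self-adjoint operators $D_s$ to get a smoothly varying spectral subspace $\E_s$ near the zero eigenvalue, and then use the constancy of $h^{p,q}(M,\F_s)$ from Theorem~\ref{thm:Invariance3} to upgrade the inclusion $\ker D_s\subset\E_s$ to an equality, so that the harmonic spaces themselves vary smoothly. The final ``transfer across the transverse/basic divide'' that you flag as the hard part is in fact immediate and requires no averaging projection: since $-D_s$ is a sum of non-negative operators, $\ker D_s$ already consists of $\mathcal L_{\xi_s}$-invariant (hence basic) harmonic $(p,q)$-forms sitting inside the fixed space $\Omega^{p+q}(M)$ via the smoothly varying inclusion $E_s^{p,q}\subset\Lambda^{p+q}T^*M_{\CC}$, which is exactly how the paper concludes in Section~\ref{sec:DeformHodgedecomp}.
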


\subsection{Reduction to the upper semi-continuity}\label{sec:reduction}
In this subsection we show that to prove Theorem~\ref{thm:Invariance3} it is sufficient to show that $h^{p,q}(M,\F_{s})$ is upper semi-continuous with respect to $s$, i.e.,
$$
h^{p,q}(M,\F_{s}) \leq h^{p,q}(M,\F_{s_0})
$$
for $s$ sufficiently close to a fixed $s_0$. By the Hodge decomposition for homologically orientable transversely K\"{a}hler foliation on compact manifolds by El Kacimi-Alaoui, Theorem~\ref{thm:hodgedecomp} in this paper, we get
$$
\sum_{p+q = k} h^{p,q}(M,\F_{s}) = b^{k}(M,\F_{s})
$$
for every $s$ where $b^{k}(M,\F_{s})$ is the $k$-th basic Betti number of $\F_{s}$. Thus we get
\begin{equation}\label{eq:h}
b^{k}(M,\F_{s}) = \sum_{p+q = k} h^{p,q}(M,\F_{s}) \leq \sum_{p+q = k} h^{p,q}(M,\F_{s_0}) = b^{k}(M,\F_{s})\;.
\end{equation}
Since $b^{k}(M,\F_{s})$ is constant with respect to $s$ by~\cite[Theorem 7.4.14]{Boyer Galicki}, we get an equality in~\eqref{eq:h}. Thus we have $h^{p,q}(M,\F_{s}) = h^{p,q}(M,\F_{s_0})$ for $s$ sufficiently close to $s_0$. Therefore $h^{p,q}(M,\F_{s})$ is locally constant with respect to $s$ and hence constant.

\subsection{Extension of basic Laplacians}\label{sec:extension}

Let $(M^{2n+1},\eta,g)$ be a compact Sasakian manifold with Reeb field $\xi$ and characteristic foliation $\F$. We can now decompose the complexification $\nu\F_{\CC}$ of the normal bundle $\nu\F$ of $\F$ into the $\sqrt{-1}$- and $-\sqrt{-1}$-eigenspace bundles of the transverse complex structure $J$ as $\nu\F_{\CC} = \nu\F^{1,0}\oplus \nu\F^{0,1}$. Similarly, we get a decomposition of the complexified conormal bundle $\nu^*\F_{\CC} =\nu^*\F^{1,0}\oplus \nu^*\F^{0,1}$, and we define
$$
E^{p,q}:=\bigwedge^p\nu^*\F^{1,0}\oplus \bigwedge^q\nu^*\F^{0,1}\;.
$$
Note that we have a decomposition
\begin{equation}\label{eq:decomptangvert}
\Omega^k(M) = \bigoplus_{u+p+q=k} \bigwedge^u T^*\F_\CC \otimes E^{p,q}\;,
\end{equation}
which depends on the metric. Let
$$
\Omega^{p,q}:=C^{\infty}(E^{p,q})
$$
be the space of sections of $E^{p,q}$. It is easy to see that $\Omega^{p,q}(M,\F)$ is a subspace of $\Omega^{p,q}$ determined as follows:
$$
\Omega^{p,q}(M,\F) = \{ \alpha \in \Omega^{p,q} \, | \, L_{X}\alpha = 0, \forall X \in C^{\infty}(T\F) \} =  {\Omega^{p,q}}^{T}\;,
$$
where $T$ is the closure of the Reeb flow mentioned in Section~\ref{sec:torus}. The basic Dolbeault operator and its conjugate in Equations~\eqref{eq:doldiff1} are extended to
\begin{align*}
\bdel \colon \Omega^{p,q} \to \Omega^{p,q+1}\quad  \text{ and }  \quad \partial \colon \Omega^{p,q} \to \Omega^{p+1,q},
\end{align*}
where $\bdel$ (resp., $\partial$) is the composition of $d|\Omega^{p,q}$ with the orthogonal projection to $\Omega^{p,q+1}$ (resp., $\Omega^{p+1,q}$) with respect to the decomposition~\eqref{eq:decomptangvert}.

Let $\stb:\Omega^{p,q}\to\Omega^{n-q,n-p}$ be the transverse Hodge star operator with respect to $h$. We obtain a Hermitian metric on $\Omega^{p,q}$ by
$$
\langle \alpha,\beta\rangle=\int_M\eta\wedge\alpha\wedge \stb\overline\beta\;.
$$
Note that the contact form $\eta$ is not a transverse form and the wedge-product $\eta\wedge\alpha\wedge \stb\overline\beta$ is taken in $\Omega^\bullet(M)$.
\begin{lemma}
The adjoint operator $\bdel^*$ of $\bdel:\Omega^{p,q-1}\to\Omega^{p,q}$ with respect to $\langle \cdot, \cdot \rangle$ is given by $\bdel^*=-\stb\partial\stb$. In other words,
\begin{equation}\label{eq:s}
\langle \bdel\alpha_1, \alpha_2 \rangle=
 \langle \alpha_1,-\stb\partial \stb\alpha_2\rangle
\end{equation}
for $\alpha_{1}\in \Omega^{p,q-1}$ and $\alpha_{2}\in \Omega^{p,q}$.
\end{lemma}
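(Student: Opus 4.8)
The plan is to obtain the formula from Stokes' theorem, in the spirit of the classical identity $\bdel^{*}=-\star\partial\star$ on a Hermitian manifold, while keeping track of which terms survive for transverse bidegree reasons. Throughout, ``transverse bidegree'' refers to the bidegree as a section of the bundles $E^{\bullet,\bullet}$, and I use that $\nu\F^{1,0}$ has complex rank $n$.

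First I would apply Stokes' theorem to the $2n$-form $\eta\wedge\alpha_{1}\wedge\stb\overline{\alpha_{2}}$ on the closed manifold $M^{2n+1}$: its exterior derivative is of top degree, hence integrates to zero, and the Leibniz rule gives
\[
0=\int_{M}d\eta\wedge\alpha_{1}\wedge\stb\overline{\alpha_{2}}-\int_{M}\eta\wedge d(\alpha_{1}\wedge\stb\overline{\alpha_{2}}).
\]
The first integrand vanishes: by Lemma~\ref{lem:ReebtrKaehler}, $d\eta$ is a basic $(1,1)$-form, while $\alpha_{1}\wedge\stb\overline{\alpha_{2}}$ is a transverse form of bidegree $(n,n-1)$, so the product has transverse bidegree $(n+1,n)$ and is zero. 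Hence $\int_{M}\eta\wedge d(\alpha_{1}\wedge\stb\overline{\alpha_{2}})=0$.

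Next I would argue that only the purely transverse part of $d$ contributes to this last integrand. Since $\F$ has one-dimensional leaves and $\eta$ spans $T^{*}\F_{\CC}$, any component of $d\gamma$ lying in $T^{*}\F_{\CC}\otimes E^{\bullet,\bullet}$ is annihilated upon wedging with $\eta$; and because the transverse holomorphic coordinates $\pi_{i}$ are basic functions, the purely transverse part of $d\gamma$, for $\gamma\in C^{\infty}(E^{a,b})$, has transverse bidegree components only of type $(a+1,b)$ and $(a,b+1)$ — that is, it equals $\partial\gamma+\bdel\gamma$ by the definitions of $\partial$ and $\bdel$. Expanding $d(\alpha_{1}\wedge\stb\overline{\alpha_{2}})$ via Leibniz and dropping the $\partial$-terms, which raise the holomorphic transverse degree to $n+1$ and hence vanish, leaves
\[
0=\int_{M}\eta\wedge\bdel\alpha_{1}\wedge\stb\overline{\alpha_{2}}+(-1)^{p+q-1}\int_{M}\eta\wedge\alpha_{1}\wedge\bdel\stb\overline{\alpha_{2}},
\]
whose first term is precisely $\langle\bdel\alpha_{1},\alpha_{2}\rangle$.

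Finally I would identify the second term with $\langle\alpha_{1},-\stb\partial\stb\alpha_{2}\rangle$ using that $\stb$ is the real transverse Hodge star extended $\CC$-linearly, so $\overline{\stb\gamma}=\stb\overline{\gamma}$; that $\overline{\partial\gamma}=\bdel\overline{\gamma}$; and that $\stb\stb=(-1)^{d(2n-d)}\,\id$ on transverse $d$-forms. A short count of the bidegree of $\bdel\stb\overline{\alpha_{2}}$ shows the accumulated signs cancel to give exactly $\langle\bdel\alpha_{1},\alpha_{2}\rangle=\langle\alpha_{1},-\stb\partial\stb\alpha_{2}\rangle$. The only genuinely delicate point is this sign bookkeeping — reconciling the Leibniz sign, the sign from conjugating $-\stb\partial\stb$, and the sign of $\stb\stb$; the remaining input, namely that the purely transverse part of $d$ on (not necessarily basic) sections of $E^{a,b}$ equals $\partial+\bdel$, is routine given the transverse holomorphic structure, and everything else follows from Stokes' theorem together with the transverse bidegree constraints.
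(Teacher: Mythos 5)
Your argument is correct and is essentially the paper's own proof: both apply Stokes' theorem to $d(\eta\wedge\alpha_1\wedge\stb\overline{\alpha_2})$, discard the $d\eta$-term because $d\eta$ is of bidegree $(1,1)$ while $\alpha_1\wedge\stb\overline{\alpha_2}$ has transverse bidegree $(n,n-1)$, discard the tangential parts of $d$ after wedging with $\eta$ and the $\partial$-terms for bidegree reasons, and finish with the $\stb\stb=(-1)^{d}$ sign bookkeeping (which I checked does cancel as you claim). Your justification that the purely transverse part of $d$ on arbitrary sections of $E^{a,b}$ equals $\partial+\bdel$ is in fact spelled out slightly more carefully than in the paper.
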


\begin{proof}
\begin{align*}
   & d(\eta\wedge\alpha_1 \wedge \stb\overline\alpha_2) \\
=\, & d\eta \wedge \alpha_1 \wedge \stb\overline\alpha_2  - \eta\wedge d\alpha_1 \wedge \stb\overline\alpha_2  + (-1)^{p+q} \eta\wedge \alpha_1 \wedge d\stb\overline\alpha_2  \\
=\, & -\eta\wedge \bdel \alpha_1 \wedge \stb\overline\alpha_2  + (-1)^{p+q}\eta\wedge \alpha_1 \wedge \bdel\stb\overline\alpha_2\\
=\, & -\eta\wedge \bdel \alpha_1 \wedge \stb\overline\alpha_2  - \eta\wedge \alpha_1 \wedge \stb(\stb\bdel\stb)\overline\alpha_2\;.
\end{align*}
The first summand of the second line is zero, since $d\eta$ is of bidegree $(1,1)$ and $\alpha_1 \wedge \stb\overline\alpha_2$ of bidegree $(n,n-1)$.  The second equality follows from the assumption on the bidegrees of $\alpha_1$ and $\alpha_2$ and from the fact that the product of $\eta$ and the tangential part of $d\alpha_i$ is zero. Now, by applying $\int_{M}$ to the first and last line of the equation, the theorem of Stokes implies~\eqref{eq:s}.
\end{proof}
Consequently the basic Laplacian $\Delta_{b} : \Omega^{p,q}(M,\F) \to \Omega^{p,q}(M,\F)$ is naturally extended to $\Delta_b : \Omega^{p,q} \to \Omega^{p,q}$ via $\Delta_b=\bdel\,\bdel^* + \bdel^*\bdel$, and the extension is self-adjoint with respect to $\langle \cdot, \cdot \rangle$. By Theorem~\ref{thm:EKA},
$$
h^{p,q}(M,\F)=\dim(\ker\Delta_b|\Omega^{p,q}(M,\F))\;.
$$
Note that $\Delta_b$ is only transversely strongly elliptic. For our later application of a theorem of Kodaira-Spencer we need a strongly elliptic operator acting on the space of sections of a Hermitian bundle. We define it by
$$
D:=\mathcal L_{\xi}\mathcal L_{\xi} - \Delta_b
$$
on $\Omega^{p,q}=C^\infty(E^{p,q})$. Note that $\mathcal L_{\xi}$ respects $\Omega^{p,q}$.
\begin{rem} This definition is motivated by a construction El Kacimi-Alaoui and Hector used to prove the transverse Hodge decomposition for Riemannian foliations~\cite[p.~224]{ElKacimiHector}; see also~\cite[p.~82]{ElKacimi}.
\end{rem}
\begin{lemma}
The differential operator $D$ is strongly elliptic and self-adjoint.
\end{lemma}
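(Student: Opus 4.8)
The plan is to verify the two asserted properties of $D = \mathcal{L}_\xi \mathcal{L}_\xi - \Delta_b$ separately, treating strong ellipticity first since it is the more substantial of the two. The operator $D$ acts on $\Omega^{p,q} = C^\infty(E^{p,q})$, the sections of a genuine Hermitian vector bundle over the compact manifold $M^{2n+1}$, so "strongly elliptic" here should be understood in the sense appropriate for deformation theory (G\r{a}rding-type inequality / positive-definite leading symbol up to sign). The key point is to compute the principal symbol of $D$ at a covector $\zeta \in T_x^* M$. Since $\Delta_b$ is only \emph{transversely} strongly elliptic, its principal symbol is (up to a positive constant) $|\zeta_\nu|^2$, where $\zeta_\nu$ denotes the component of $\zeta$ in the conormal direction $\nu^*\F_\CC$; it vanishes precisely when $\zeta$ is "tangential," i.e. proportional to $\eta_x$ (equivalently, $\zeta$ annihilates $\ker\eta$ — note $\xi$ spans $T\F$). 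The term $\mathcal{L}_\xi \mathcal{L}_\xi$ is a second-order differential operator along the leaves whose principal symbol is $-\langle \zeta, \xi\rangle^2 = -\zeta(\xi)^2$ times the identity. Thus the principal symbol of $D$ is $\zeta(\xi)^2 + c|\zeta_\nu|^2$ (with an appropriate sign convention making both terms nonnegative), and this is strictly positive whenever $\zeta \neq 0$: if $\zeta_\nu = 0$ then $\zeta$ is a multiple of $\eta_x$ and $\zeta(\xi) \neq 0$, while if $\zeta_\nu \neq 0$ the second term is positive. I would write the splitting $\Omega^k(M) = \bigoplus \bigwedge^u T^*\F_\CC \otimes E^{p,q}$ from \eqref{eq:decomptangvert} explicitly to organize this symbol computation, being careful that $\bdel$ and $\partial$ are defined as $d$ followed by orthogonal projection, so the cross terms between the leafwise and transverse directions contribute only to lower order.

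For self-adjointness with respect to $\langle\cdot,\cdot\rangle$, the summand $\Delta_b = \bdel\bdel^* + \bdel^*\bdel$ is manifestly self-adjoint on $\Omega^{p,q}$ once one has the preceding lemma identifying $\bdel^* = -\stb\,\partial\,\stb$ as the genuine adjoint of $\bdel$ with respect to this Hermitian metric. It therefore remains to check that $\mathcal{L}_\xi \mathcal{L}_\xi$, equivalently $\mathcal{L}_\xi$, is (skew- or self-)adjoint for $\langle\cdot,\cdot\rangle$. Here I would use that the Reeb flow preserves $\eta$, $g$, and hence the transverse Hermitian metric $h$ and the operator $\stb$; consequently it preserves the integrand $\eta\wedge\alpha\wedge\stb\overline\beta$ up to the Lie derivative, and Cartan's formula together with Stokes's theorem on the closed manifold $M$ gives $\langle \mathcal{L}_\xi\alpha,\beta\rangle + \langle\alpha,\mathcal{L}_\xi\beta\rangle = \int_M \mathcal{L}_\xi(\eta\wedge\alpha\wedge\stb\overline\beta) = \int_M d\,\iota_\xi(\eta\wedge\alpha\wedge\stb\overline\beta) = 0$ (using $\iota_\xi d(\cdots) = \mathcal{L}_\xi(\cdots) - d\iota_\xi(\cdots)$ and that the top-degree form is closed). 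So $\mathcal{L}_\xi$ is skew-adjoint, whence $\mathcal{L}_\xi\mathcal{L}_\xi$ is self-adjoint, and $D$ is self-adjoint as a difference of two self-adjoint operators.

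The main obstacle is the symbol computation: one must be genuinely careful about the fact that the decomposition \eqref{eq:decomptangvert} and the operators $\bdel,\partial$ are metric-dependent and that $\bdel$ is $d$ \emph{post-composed with a projection}, so that $\bdel$ is not simply the transverse exterior derivative — there can be algebraic (order-zero) correction terms coming from the second fundamental form of $\F$ and from the failure of the leafwise and transverse parts of $d$ to split cleanly. These corrections are harmless for ellipticity (they are lower order), but they must be acknowledged so that the claim "principal symbol of $\Delta_b$ equals $c|\zeta_\nu|^2$" is justified rather than merely asserted; the cleanest route is to note that $\bdel = \bdel_b$ on basic forms but in general differs by zeroth-order terms, hence has the same principal symbol as the transverse $\bdel$-operator of the transversely Kähler structure, whose Laplacian has the stated transversely-elliptic symbol. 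With that in hand the positivity $\zeta(\xi)^2 + c|\zeta_\nu|^2 > 0$ for $\zeta\neq 0$ is immediate from $T^*_xM = \RR\,\eta_x \oplus \nu^*_x\F$.
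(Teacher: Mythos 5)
Your proposal is correct and takes essentially the same route as the paper: the paper also proves strong ellipticity by identifying the principal symbol of $D$ as the sum of the leafwise contribution $\zeta(\xi)^2$ and the transverse contribution $\tfrac12|\zeta_\nu|^2$ (computed there in an adapted chart with $\xi=\partial/\partial t$), which is positive for $\zeta\neq 0$ because $T^*_xM=\RR\,\eta_x\oplus\nu^*_x\F$, and it likewise reduces self-adjointness to skew-adjointness of $\mathcal L_\xi$ via the Leibniz rule for $\mathcal L_\xi$ on $\eta\wedge\alpha_1\wedge\stb\overline\alpha_2$ (using $\mathcal L_\xi\eta=0$ and $[\mathcal L_\xi,\stb]=0$) followed by $\mathcal L_\xi(\cdot)=d\iota_\xi(\cdot)$ and integration over the closed manifold. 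Your added caution that $\bdel$ is $d$ post-composed with a metric-dependent projection, so that only the principal symbol (and not the full operator) agrees with the transverse $\bdel$-Laplacian, is a fair gloss on the step the paper handles by citing the standard symbol computation for the transverse Laplacian.
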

\begin{proof}
We can take a chart $(t, x_{1}, y_{1}, \ldots, x_{q}, y_{q})$ around any point $z$ of $M$ such that $\xi = \partial /\partial t$ and such that $(x_{1}, y_{1}, \ldots, x_{q}, y_{q})$ is a transverse holomorphic chart and $\{\partial/\partial x_{i}, \partial/ \partial y_{i}\}$ is an orthonormal basis of $(T_{z}\F)^{\perp}$. Then $D = \partial^{2} /\partial^{2} t - \Delta_{b}$ on this chart. For $\zeta\in T^{*}_{z}M$, let $\sigma_\zeta(P)\in \End(E^{p,q}_{z})$ be the symbol of $P$ at $\zeta$, where $P = \partial^{2} /\partial^{2} t$ or $\Delta_{b}$. For $\zeta = \zeta_{0}dt + \sum_{i=1}^{n}\zeta_{2i-1}dx_{i} + \sum_{i=1}^{n}\zeta_{2i}dy_{i}$, we have $\sigma_\zeta(\partial^{2} /\partial^{2} t)=\zeta_0^2\cdot\id_{E^{p,q}_{z}}$, and since $\Delta_{b}$ is the transverse Laplacian and independent of $t$, $\sigma_\zeta(\Delta_b)= - \frac{1}{2}\big(\sum_{i=1}^{2q} \zeta_{i}^{2}\big) \id_{E^{p,q}}$ (see, for example, \cite[Lemma~5.19]{Voisin}). We obtain $\sigma_\zeta(D)=\big(\zeta_{0}^{2} + \frac{1}{2}\sum_{i=1}^{2q} \zeta_{i}^{2}\big) \id_{E^{p,q}}$ as the sum of the symbols of the two operators, thereby implying strong ellipticity of $D$.

We know already that $\Delta_b$ is self-adjoint, so it remains to show that $\mathcal L_\xi$ is skew-symmetric. For $\alpha_1,\alpha_2\in\Omega^{p,q}$ we have
$$
\mathcal L_\xi(\eta\wedge\alpha_1\wedge\stb\overline\alpha_2)=\eta\wedge\mathcal L_\xi\alpha_1\wedge\stb\overline\alpha_2+ \eta\wedge\alpha_1\wedge\stb\mathcal L_\xi\overline\alpha_2\;,
$$
because $\mathcal L_\xi\eta=0$ and $\mathcal L_\xi\stb=\stb\mathcal L_\xi$ (as $\xi$ is Killing). It therefore suffices to show that the left hand side is zero. Now
$$
\mathcal L_\xi(\eta\wedge\alpha_1\wedge\stb\overline\alpha_2)=d\iota_\xi(\eta\wedge\alpha_1\wedge\stb\overline\alpha_2)
=d(\alpha_1\wedge\stb\overline\alpha_2)\;,
$$
because the $\alpha_i$ are horizontal and $\eta(\xi)\equiv 1$. The form on the right is a basic $(2n+1)$-form and therefore zero.
\end{proof}

On basic forms, i.e., sections of $E^{p,q}$ invariant under the Reeb flow,  $D$ coincides with the basic Laplacian $\Delta_{b}$. By the basic Hodge decomposition (Theorem~\ref{thm:EKA}), this means the basic Hodge numbers $h^{p,q}(M,\F)$ are given by
\begin{equation}\label{eq:harmonicinvhodge}
h^{p,q}(M,\F)= \dim (\ker \Delta_{b}) = \dim(\ker \big( D|{\Omega^{p,q}}^H \big) ) =\dim(\ker D)^H\;,
\end{equation}
where $H$ is the Reeb flow. In fact, this equality is also true for any connected Lie subgroup $H$ of the automorphism group $\AUT(\eta,g)$ of the Sasakian structure whose Lie algebra contains the Reeb field. This is because any Sasaki automorphism commutes with the Reeb flow, is transversely isometric and holomorphic, and therefore commutes with $\Delta_b$, thus mapping a harmonic basic form to a harmonic basic form. Since $H$ as a connected Lie group acts trivially on basic Dolbeault cohomology, it fixes harmonic basic forms.

\subsection{Proof of the upper semi-continuity}\label{sec:proofofuppersemic}

We consider a smooth family $\F_{s}$ of Reeb flows of Sasakian structures given as in Theorem~\ref{thm:Invariance3}. We have a family of actions $\{H\to \AUT(\eta_s,g_s)\}$ of a connected compact Lie group $H$ on $M$ that contains the Reeb flow of $\eta_{s}$ as one-parameter subgroups. We consider the Hermitian bundle $E^{p,q}_{s} $ over $M$ with the operator $D_s$ on $C^{\infty}(E^{p,q}_{s})$. This is a smooth family of strongly elliptic, self-adjoint differential operators.
We have already mentioned
$$
h^{p,q}(M,\F_{s}) = \dim (\ker D_{s})^H\;.
$$

By the spectral theorem for formally self-adjoint strongly elliptic differential operators (see~\cite[Theorem 1]{KS}), there exists a complete orthonormal system of eigensections $\{e_{sh}\}_{h=1}^{\infty}$ of $D_{s}$, such that their eigenvalues $\lambda_h(s)$ constitute for each $s$ an ascending sequence in $[0,\infty)$ whose unique accumulation point is $\infty$.

In the following, we fix a parameter $s_0$. Take an integer $k_{0}$ so that
$$
\{ h \,\, | \,\, \lambda_{h}(s_0) = 0\} = \{h \,\, | \,\, 1 \leq h \leq k_{0}\}\;.
$$

Let $\E_{s}$ be the subspace of $C^{\infty}(E_{s}^{p,q})$ spanned by $\{e_{sh}\}_{h=1}^{k_{0}}$. It is therefore a sum of eigenspaces of $D_s$.

\begin{lemma}\label{lem:E_s}
There exists an open neighborhood $U'$ of $s_0$ in the parameter space $[0,1]$ and smooth families $\{f_{sh}\}_{h=1}^{k_{0}}$ of sections of $E_{s}^{p,q}$ such that for each $s\in U'$ the subspace of $C^{\infty}(E_{s}^{p,q})$ spanned by $\{f_{sh}\}_{h=1}^{k_{0}}$ is equal to $\E_{s}$.
\end{lemma}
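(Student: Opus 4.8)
\emph{Approach and Step 1 (reduction to a fixed bundle).} The plan is to run the classical spectral‑projection argument of Kodaira--Spencer deformation theory, cf.~\cite{KS}, adapted to the fact that here the Hermitian bundle $E^{p,q}_s$ on which $D_s$ acts varies with $s$. Because $\{(\eta_s,g_s)\}$ is a smooth family, the bundles $E^{p,q}_s$ together with their transverse Hermitian metrics and the inner products $\langle\cdot,\cdot\rangle_s$ depend smoothly on $s$; hence, after shrinking the parameter interval to a neighborhood $U$ of $s_0$, I would fix a smooth family of bundle isomorphisms $\Psi_s\colon E^{p,q}_s\to E^{p,q}_{s_0}$ with $\Psi_{s_0}=\id$. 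Conjugating, one obtains on the fixed bundle $E:=E^{p,q}_{s_0}$ a smooth family of second‑order strongly elliptic operators $\widehat D_s:=\Psi_s\circ D_s\circ\Psi_s^{-1}$, self‑adjoint for the smoothly varying inner products $(\Psi_s)_\ast\langle\cdot,\cdot\rangle_s$, with eigensections $\widehat e_{sh}:=\Psi_s e_{sh}$ and unchanged eigenvalues $\lambda_h(s)$. It then suffices to produce smooth families $\{\widehat f_{sh}\}_{h=1}^{k_0}$ of sections of $E$ spanning $\Psi_s(\E_s)=\operatorname{span}\{\widehat e_{sh}\}_{h=1}^{k_0}$ for each $s$, and set $f_{sh}:=\Psi_s^{-1}\widehat f_{sh}$.

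\emph{Step 2 (spectral projection and its smoothness).} Since $0$ is an isolated point of the discrete spectrum of $\widehat D_{s_0}$, pick $c>0$ with $c\neq\lambda_h(s_0)$ for all $h$ and $0=\lambda_{k_0}(s_0)<c<\lambda_{k_0+1}(s_0)$. By the stability of finite parts of the spectrum under small perturbations of a strongly elliptic operator --- the analytic core of \cite{KS}, based on the elliptic a priori estimates --- after shrinking $U$ the circle $\Gamma:=\{\zeta\in\CC\mid|\zeta|=c\}$ is disjoint from the spectrum of $\widehat D_s$ for all $s\in U$ and encloses exactly $\lambda_1(s),\dots,\lambda_{k_0}(s)$, counted with multiplicity. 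Put
$$
\widehat P_s:=\frac{1}{2\pi i}\oint_\Gamma(\zeta-\widehat D_s)^{-1}\,d\zeta \;,
$$
the spectral projection onto $\bigoplus_{h=1}^{k_0}\ker(\widehat D_s-\lambda_h(s))=\Psi_s(\E_s)$. The same perturbation theory gives that, for every Sobolev order $\ell$, the resolvent $(\zeta-\widehat D_s)^{-1}$ depends smoothly on $s\in U$ and holomorphically on $\zeta$ near $\Gamma$ as bounded operators between the Sobolev completions of $C^\infty(E)$, uniformly in $\zeta\in\Gamma$; integrating along $\Gamma$ shows $\widehat P_s$ is a smooth family of finite‑rank operators, and since its image consists of eigensections of an elliptic operator, elliptic regularity makes $\widehat P_s$ a smooth family with values in $C^\infty(E)$.

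\emph{Step 3 (extracting a basis).} The restrictions $\widehat P_s|_{\E_{s_0}}\colon\E_{s_0}\to\Image\widehat P_s$ satisfy $\widehat P_{s_0}\circ(\widehat P_s|_{\E_{s_0}})\to\widehat P_{s_0}|_{\E_{s_0}}=\id_{\E_{s_0}}$ as $s\to s_0$, so after shrinking $U$ to an open neighborhood $U'$ of $s_0$ they become isomorphisms (both source and target having dimension $k_0$). Hence $\widehat f_{sh}:=\widehat P_s(e_{s_0h})$, $h=1,\dots,k_0$, is a smooth family of sections of $E$ forming a basis of $\Image\widehat P_s=\Psi_s(\E_s)$ for each $s\in U'$, and $f_{sh}:=\Psi_s^{-1}\widehat f_{sh}$ are the desired smooth families, which span $\E_s$ and depend smoothly on $s$.

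\emph{Main obstacle.} The only genuinely analytic ingredient is the smooth dependence of the resolvent $(\zeta-\widehat D_s)^{-1}$ on $(s,\zeta)$ between Sobolev spaces, uniformly along $\Gamma$, together with the consequent stability of the finitely many small eigenvalues; this is precisely the Kodaira--Spencer machinery of \cite{KS}, and once it is invoked the rest is soft linear algebra and complex analysis. A minor bookkeeping point is the reduction to a fixed bundle in Step 1, which is forced on us because the basic Dolbeault setup makes $E^{p,q}_s$ genuinely $s$‑dependent, unlike in the classical Kodaira--Spencer situation.
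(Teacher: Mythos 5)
Your proposal is correct and is essentially the paper's own argument: both choose a circle separating the eigenvalues $\lambda_1(s_0),\dots,\lambda_{k_0}(s_0)=0$ from the rest of the spectrum, invoke the Kodaira--Spencer stability/smoothness results for the associated spectral projection onto $\E_s$, apply that projection to a smooth family extending the eigenbasis at $s_0$, and conclude by linear independence at $s_0$ persisting nearby. The only (harmless) bookkeeping difference is that you handle the $s$-dependence of $E^{p,q}_s$ by trivializing the bundle family and writing the projection as a resolvent contour integral, whereas the paper extends the eigensections $e_{s_0h}$ to smooth families by a partition of unity and cites the projection $F_s(C)$ directly from Kodaira--Spencer's Theorem~3.
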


\begin{proof}
For $1 \leq h \leq k_{0}$, we extend $e_{s_0h}$ to a smooth family of sections $e'_{sh}$ of $E_{s}^{p,q}$ such that $e'_{s_0h} = e_{s_0h}$ by partition of unity on $M$. Choose a circle $C$ in $\CC$ centered at $0$ so that the closed disk which bounds $C$ contains only $0$ among the eigenvalues $\{\lambda_{h}(s_0)\}_{h=1}^{\infty}$ of $D_{s_0}$. By~\cite[Theorem 2]{KS}, $\lambda_{h}(s)$ is continuous with respect to $s$. Then there exists an open neighborhood $U$ of $s_0$ in the parameter space such that none of $\lambda_{h}(s)$ for any $h$ and any $s\in U$ lie on $C$. This assumption on $C$ shows that $\E_s$ is the sum of eigenspaces of $D_s$ corresponding to the eigenvalues $\lambda_1(s),\ldots,\lambda_{k_0}(s)$. Moreover, this condition implies that we can apply ~\cite[Theorem 3]{KS}:  by setting
$$
F_{s}(C)(f) = \sum_{1 \leq h \leq k_{0}} \langle f,e_{sh}\rangle_{s}e_{sh}
$$
for $f\in C^{\infty}(E_{s}^{p,q})$, we obtain a smooth family $F_{s}(C) : C^{\infty}(E_{s}^{p,q}) \to \E_s$ of projections, that is, $F_{s}(C)$ maps a smooth family of sections of $E_{s}^{p,q}$ to a smooth family of $\E_s$. Thus letting $f_{sh}=F_{s}(C)(e'_{sh})$ for $1 \leq h \leq k_{0}$, we get a smooth family $f_{sh}\in\E_s$. Since $\{f_{s_0h}\}_{h=1}^{k_{0}} = \{e_{s_0h}\}_{h=1}^{k_{0}}$ is linearly independent, there exists an open neighborhood $U'$ of $s_0\ in U$ such that $\{f_{sh}\}_{h=1}^{k_0}$ is linearly independent for each $s\in U'$.
\end{proof}

\begin{lemma}
The space $\E_{s}$ is an $H$-invariant subspace of $C^\infty(E_s^{p,q})$, and the dimension of the subspace $\E_{s}^H$ of $H$-fixed elements is constant with respect to $s$.
\end{lemma}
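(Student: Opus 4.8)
The plan is to treat the two assertions separately: the $H$-invariance of $\E_s$ is immediate from the compatibility of the $H$-action with $D_s$, and the constancy of $\dim\E_s^H$ will then follow by combining the smooth frame of Lemma~\ref{lem:E_s} with the elementary fact that a continuously varying integer is locally constant.

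First I would note that each $\phi_s(a)$ preserves $\eta_s$ and $g_s$, hence also the Reeb field $\xi_s$ (which is determined by $\eta_s$) and the transverse Hermitian and holomorphic structures of $\F_s$. Consequently the operator $\rho_s(a)$ induced by $\phi_s(a)$ on $\Omega^{p,q}=C^\infty(E_s^{p,q})$ commutes with $\mathcal L_{\xi_s}$, with $\stb$, and with $\bdel$ and $\partial$, and therefore with $D_s=\mathcal L_{\xi_s}\mathcal L_{\xi_s}-\Delta_b$. As recorded in the proof of Lemma~\ref{lem:E_s}, for $s$ in the neighbourhood $U'$ of $s_0$ the space $\E_s$ equals the sum of the eigenspaces of $D_s$ for the eigenvalues enclosed by the circle $C$; since $\rho_s(a)$ commutes with $D_s$ it preserves each of these eigenspaces, so $\rho_s(a)\E_s=\E_s$. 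Thus $\rho_s$ restricts to a finite-dimensional representation of $H$ on $\E_s$.

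Next I would work on $U'$ and express this $H$-representation in the smooth frame $\{f_{sh}\}_{h=1}^{k_0}$ of $\E_s$ from Lemma~\ref{lem:E_s}, writing $\rho_s(a)f_{sj}=\sum_i A(s,a)_{ij}f_{si}$ for $a\in H$. Because the frame $\{f_{si}\}$ depends smoothly on $s$ and $\{\phi_s\}$ is a smooth family of actions, the sections $\rho_s(a)f_{sj}$ depend continuously on $(s,a)$, and solving for the coefficients against the (continuously varying, invertible) Gram matrix of the frame shows that $A(s,a)$ is continuous in $(s,a)$. Since $H$ is compact, with normalised Haar measure $da$, the matrix $\int_H A(s,a)\,da$ represents, in this frame, the projection of $\E_s$ onto its $H$-fixed subspace, so that
\[
\dim\E_s^H=\operatorname{tr}\Big(\int_H A(s,a)\,da\Big)=\int_H\operatorname{tr}A(s,a)\,da.
\]
The right-hand side is continuous in $s$ on $U'$ by continuity of $A$ and compactness of $H$; being integer-valued, it is locally constant, hence constant near $s_0$, which is exactly what the reduction in Section~\ref{sec:reduction} requires.

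The main obstacle is the continuity in $s$ of the matrices $A(s,a)$. This rests entirely on Lemma~\ref{lem:E_s}, i.e.\ on the Kodaira--Spencer input that $\E_s$ carries an honestly smooth frame, together with the smoothness of the family $\{\phi_s\}$, which makes the $H$-action on the smooth family of bundles $E_s^{p,q}$ itself smooth; everything afterwards — the representation-theoretic identity for $\dim\E_s^H$ and the passage from continuity to local constancy — is routine. One could also phrase the argument operator-theoretically: $\int_H\rho_s(a)\,da$ composed with the finite-rank spectral projection $F_s(C)$ of Lemma~\ref{lem:E_s} is a smooth family of projections with image $\E_s^H$, so its trace is continuous, hence locally constant, in $s$.
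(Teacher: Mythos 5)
Your proof is correct, and its first half coincides with the paper's: $H$ commutes with $D_s$, hence preserves its eigenspaces, hence preserves $\E_s$ as a sum of eigenspaces. For the second half the routes diverge slightly. The paper also uses the smooth trivialization $\{f_{sh}\}$ of $\bigcup_{s\in U'}\E_s$ from Lemma~\ref{lem:E_s} to view the $H$-actions as a smooth fiberwise action on a trivial bundle over $U'$, but then simply cites the general rigidity of compact-group representations under smooth deformation (\cite[Proposition~B.57]{GGK}) to conclude that the isomorphism class, and in particular $\dim\E_s^H$, is locally constant. You instead prove exactly the statement needed by an explicit character computation: writing the action in the smooth frame as matrices $A(s,a)$ continuous in $(s,a)$ (via the continuously invertible Gram matrix), identifying $\int_H A(s,a)\,da$ as the projection onto $\E_s^H$, and observing that its trace is a continuous integer-valued function of $s$. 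This is self-contained and more elementary, at the cost of proving less (only the dimension of the fixed subspace rather than the full isomorphism type, which is all that is needed here); both arguments rest on the same essential input, namely the smooth frame produced by the Kodaira--Spencer machinery in Lemma~\ref{lem:E_s}.
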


\begin{proof}
Since the $H$-action on $C^{\infty}(E_{s}^{p,q})$ commutes with $D_{s}$, the eigenspaces of  $D_{s}$ are invariant. Thus $\E_{s}$, a sum of eigenspaces of $D_{s}$ by definition, is invariant. By Lemma~\ref{lem:E_s}, $\bigcup_{s \in U'} \E_{s}$ forms a trivial smooth vector bundle $\E \to U'$ of rank $k_{0}$ over $U'$ with a smooth trivialization $\{f_{sh}\}$. The existence of a smooth trivialization of $\E$ implies that the $H$-actions on the $\E_{s}$ form a smooth fiberwise $H$-action on $\E$. It is well known that representations of a compact Lie group do not change the isomorphism class under smooth deformations (see, for example,~\cite[Proposition B.57]{GGK}), and therefore $\dim \E_{s}^H$ is constant with respect to $s$.
\end{proof}

We now come to the main goal of this subsection.
\begin{lemma}\label{lem:usc}
$\dim (\ker D_{s})^H$ is upper semicontinuous with respect to $s$ at $s_0$.
\end{lemma}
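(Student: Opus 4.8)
The plan is to compare $\dim(\ker D_s)^H$ with $\dim \E_s^H$ for $s$ near $s_0$. First I would observe that $D_{s_0}$ restricted to $\E_{s_0}$ is the zero operator, so $\E_{s_0} = \ker D_{s_0}$ and in particular $\dim(\ker D_{s_0})^H = \dim \E_{s_0}^H = k_0^H$, say. For $s \in U'$ (the neighborhood furnished by Lemma~\ref{lem:E_s}), the space $\E_s$ is by construction the sum of the eigenspaces of $D_s$ corresponding to the eigenvalues $\lambda_1(s), \ldots, \lambda_{k_0}(s)$ --- the ones that collapse to $0$ at $s = s_0$ --- and every other eigenvalue of $D_s$ is bounded away from $0$ by the choice of the contour $C$. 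Since $D_s \geq 0$, this means $\ker D_s \subseteq \E_s$ for all $s \in U'$: any $D_s$-harmonic section is an eigensection with eigenvalue $0$, hence lies among the collapsing eigenvalues.

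Next I would pass to $H$-fixed subspaces. Intersecting the inclusion $\ker D_s \subseteq \E_s$ with the $H$-fixed subspace (both spaces are $H$-invariant, the former because the $H$-action commutes with $D_s$, the latter by the previous lemma) gives $(\ker D_s)^H \subseteq \E_s^H$ for all $s \in U'$. Taking dimensions,
\[
\dim(\ker D_s)^H \leq \dim \E_s^H = \dim \E_{s_0}^H = \dim(\ker D_{s_0})^H
\]
for every $s \in U'$, where the middle equality is exactly the content of the preceding lemma (constancy of $\dim \E_s^H$) and the last equality is the observation from the first paragraph. This is precisely the asserted upper semicontinuity at $s_0$.

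The only point requiring care --- and the one I would single out as the crux --- is the justification that $\E_s$ really is the sum of the eigenspaces of $D_s$ for the eigenvalues $\lambda_1(s), \ldots, \lambda_{k_0}(s)$, and that no other eigenvalue enters the disk bounded by $C$; but this has already been established inside the proof of Lemma~\ref{lem:E_s} via the continuity of the eigenvalues \cite[Theorem~2]{KS} and the spectral projection $F_s(C)$ of \cite[Theorem~3]{KS}. Granting that, the remaining steps are the elementary observations that a nonnegative operator has kernel equal to its $0$-eigenspace, that $H$-invariance is inherited by subspaces, and that dimension is monotone under inclusion. So essentially all the analytic work is already done in the two preceding lemmas, and this lemma is a short combinatorial consequence.
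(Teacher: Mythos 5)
Your proposal is correct and follows essentially the same route as the paper: identify $\E_{s_0}=\ker D_{s_0}$, use $\ker D_s\subset\E_s$ for $s\in U'$ (which the paper attributes to the construction of $\E_s$ and you justify via the contour $C$ separating the collapsing eigenvalues from the rest), pass to $H$-fixed subspaces, and invoke the constancy of $\dim\E_s^H$ from the preceding lemma. The appeal to nonnegativity of $D_s$ is superfluous --- the kernel of a self-adjoint operator with a complete orthonormal eigensystem is its $0$-eigenspace in any case --- but this does not affect the argument.
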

\begin{proof}
It is sufficient to show that $\dim (\ker D_{s_0})^H \geq \dim (\ker D_{s})^H$ for $s\in U'$. At $s=s_0$, we get $(\ker D_{s_0})^H = \E_{s_0}^H$. On the other hand, for any $s\in U'$ it is true that $\ker D_{s} \subset \E_{s}$ by construction of $\E_{s}$. Thus we get
\begin{equation}\label{eq:ats}
 (\ker D_{s})^H \subset \E_{s}^H\;.
\end{equation}
Since $\dim \E_{s_0}^H = \dim \E_{s}^H$ by the last lemma, we conclude the proof.
\end{proof}
This concludes the proof of Theorem~\ref{thm:Invariance3}.\smallskip

\subsection{Proof of Theorem~\ref{thm:Hodge}}\label{sec:DeformHodgedecomp}
$\E_s^H$ is the space of harmonic forms of bidegree $(p,q)$ and is naturally embedded in $\Omega^\bullet(M)$. To prove Theorem~\ref{thm:Hodge} it is sufficient to show that this embedding depends smoothly on $s$. Now Theorem~\ref{thm:Invariance3} implies that equality holds in~\eqref{eq:ats}. So $\E^H=\bigcup_{s\in [0,1]} \E_s^H$ is a smooth vector bundle over $[0,1]$ and hence trivial. The span of a trivializing basis field gives a smooth map of $\E^H\to \Omega^\bullet(M)$. This proves Theorem~\ref{thm:Hodge}.

\section{Transverse actions on foliated manifolds}\label{sec:transverseactionsonchar}

\subsection{Definition of transverse actions on foliated manifolds}\label{sec:transverseaction}

In this section, let us recall transverse actions on foliated manifolds introduced in~\cite{Alvarez Lopez} and related notions.

Let $\F$ be a foliation of a manifold $M$. By $\Xi(\F)$ we denote the space of differentiable vector fields on $M$ that are tangent to the leaves of $\F$. A vector field $X$ on $M$ is said to be {\em foliate} if for every $Y\in \Xi(\F)$ the Lie bracket $[X,Y]$ also belongs to $\Xi(\F)$. A vector field is foliate if and only if its flow maps leaves of $\F$ to leaves of $\F$, see~\cite[Proposition~2.2]{Molino}. The set $L(M,\F)$ of foliate fields is the normalizer of $\Xi(\F)$ in the Lie algebra $\Xi(M)$ of vector fields on $M$ and therefore a Lie sub-algebra of $\Xi(M)$. We call the projection of a foliate field $X$ to $TM/T\F$ a {\em transverse} field. The set $l(M,\F)=L(M,\F)/\Xi(\F)$ of transverse fields is also a Lie algebra inheriting the Lie bracket from $L(M,\F)$.
\begin{defn}[{\cite[Section 2]{Alvarez Lopez}}]\label{def:transverseaction}
A  {\em transverse action} of a finite-dimen\-sional Lie algebra $\mfg$ on the foliated manifold $(M,\F)$ is a Lie algebra homomorphism $\mfg\to l(M,\F)$.
\end{defn}
\noindent Given a transverse action of $\mfg$, we will denote the transverse field associated to $X\in \mfg$ by $\trX\in l(M,\F)$. If $\F$ is the trivial foliation by points, this notion coincides with the usual notion of an infinitesimal action on the manifold $M$.

If $\F$ is a Riemannian foliation, then a transverse field is called a \emph{transverse Killing field} if one (and hence all) of its representatives in $L(M,\F)$ leaves invariant the transverse metric. The set of transverse Killing fields form a Lie subalgebra ${\mathfrak{iso}}(M,\F,g)$ of $l(M,\F)$.
\begin{defn} If $\F$ is a Riemannian foliation, then a transverse action $\mfg\to {\mathfrak{iso}}(M,\F,g)$ is called \emph{isometric}.
\end{defn}

If $\F$ is a transversely holomorphic foliation, then a transverse field $X\in l(M,\F)$ is said to \emph{preserve the transverse complex structure of $\F$} if $L_{\trX}J=0$, where $J$ is the transverse complex structure of $\F$.

\begin{defn}
If $\F$ is a transversely holomorphic foliation, then a transverse action $\mfg\to l(M,\F)$ is called {\em holomorphic} if every $X$ preserves the transverse complex structure of $\F$.
\end{defn}

\subsection{The canonical transverse action on a Sasakian manifold}\label{sec:tractiononsas}

Let $(M,\eta,g)$ be a compact Sasakian manifold with characteristic foliation $\F$. The closure of the Reeb flow is denoted by $T$. We have already remarked in Section~\ref{sec:HodgetrKaehler} that the characteristic foliation $\F$ is transversely K\"ahler. By the commutativity of $T$, the $T$-action preserves $\F$. Hence there is a canonical transverse action on $(M,\F)$
\begin{equation}\label{eq:traction}
\mft/\RR \xi \longrightarrow l(M,\F)\;.
\end{equation}
By~\cite[Equation~(8.1.4)]{Boyer Galicki}, the natural transverse action of $\mfa=\mft/\RR$ is isometric and holomorphic.

As recalled in the next section, a more general class of Riemannian foliations called Killing foliations admit natural actions of Abelian Lie algebras in a similar way.

\subsection{The canonical transverse action on Killing foliations}\label{sec:traction}\label{sec:tractiononKilling}

The Molino sheaf $\mathcal{C}$ of a Riemannian foliation $(M,\F)$ is a locally constant sheaf of Lie algebras, whose stalks consist of certain local transverse fields. Precisely, a stalk of $\mathcal{C}$ consists of local transverse vector fields on $(M,\F)$ whose natural lifts to the orthonormal frame bundle $M^{1}$ of $(M,\F)$ commute with any global transverse field of $(M^{1},\F^{1})$, where $\F^{1}$ is the canonical horizontal lift of $\F$ (see \cite[Section~4]{Molino}).
\begin{defn} A Riemannian foliation is called a \emph{Killing foliation} if its Molino sheaf is globally constant.
\end{defn}
In particular, Riemannian foliations on simply connected manifolds are Killing.

Any global section of the Molino sheaf $\mathcal{C}$ of a Riemannian foliation $(M,\F)$ is a transverse field on $(M,\F)$ which commutes with any global transverse field of $(M,\F)$. So the space $\mfa$ of global sections of $\mathcal C$ is central in $l(M,\F)$, hence it is an abelian Lie algebra acting transversely on $(M,\F)$. Following~\cite{GT2010}, for a Killing foliation $\F$ the Lie algebra $\mfa$ is called the {\em structural Killing algebra} of $\F$. The orbits of the leaves under the action of the structural Killing algebra are the leaf closures~\cite[Theorem~5.2]{Molino}, see also~\cite[Section~4.1]{GT2010}.

\begin{ex}
By~\cite[Th\'{e}or\`{e}me~A]{MolSer}, the characteristic foliation of a Sasakian manifold is a Killing foliation. Its Killing algebra is identified with $\mfa=\mft/\RR$ in the last section by~\cite[Example~4.3]{GT2010}.
\end{ex}

We have seen that the canonical transverse action on a Sasakian manifold is isometric and holomorphic. This is more generally true for Hermitian Killing foliations:
\begin{lemma}\label{lem:transversisometric} If $\F$ is a transversely Hermitian Killing foliation, then the transverse $\mfa$-action on $(M,\F)$ is isometric and holomorphic.
\end{lemma}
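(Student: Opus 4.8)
The plan is to reduce the statement to the structure theory of the Molino sheaf and then invoke the fact, already established in the previous section, that the canonical transverse action on a Sasakian manifold is isometric and holomorphic (via the reference \cite{Boyer Galicki} Equation (8.1.4)). The point is to see that the Sasakian case is \emph{typical}: a transversely Hermitian Killing foliation is, locally, modeled on a transversely Hermitian situation where the structural Killing algebra $\mfa$ acts by infinitesimal transformations preserving both the transverse metric and the transverse complex structure, and globality of the Molino sheaf lets us patch these local statements into a global one.

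First I would recall that for a transversely Hermitian foliation $\F$ the transverse Levi-Civita connection on $\nu\F$ and the transverse complex structure $J$ are compatible (the transverse metric is Hermitian), and that the Molino sheaf $\mathcal C$ is a \emph{locally constant} sheaf whose stalks consist of local transverse Killing fields — this is the content of Molino's theory \cite{Molino}, and the fact that stalks are transverse \emph{Killing} fields already gives that every local section of $\mathcal C$ is isometric. So the only thing to check is holomorphicity: for a local section $\widetilde X$ of $\mathcal C$ one must show $L_{\widetilde X}J=0$. Here is where I would use that $\widetilde X$, by the very definition of the Molino sheaf, lifts to the transverse orthonormal frame bundle $M^1$ (or rather the transverse unitary frame bundle, since $\F$ is transversely Hermitian) to a vector field commuting with all global transverse fields; in particular it is parallel with respect to the transverse canonical connection, and a parallel transverse field automatically commutes with any parallel tensor, in particular with $J$ (which is parallel for the transverse Hermitian connection). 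A cleaner route: locally $\F$ is given by a submersion $\pi_i\colon U_i\to \pi_i(U_i)\subset\CC^r$ onto a Hermitian manifold, and the stalks of $\mathcal C$ push down to \emph{local holomorphic Killing fields} on $\pi_i(U_i)$ by Molino's local description — this is precisely the statement that the transverse holonomy pseudogroup acts by local Hermitian isometries. Either way, each local section of $\mathcal C$ preserves $(h,J)$.

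The final step is the passage from local to global: the structural Killing algebra $\mfa$ is by definition (since $\F$ is Killing) the space of \emph{global} sections of $\mathcal C$, and a global section is in particular a local section near every point, so it preserves $h$ and $J$ everywhere. Hence the transverse $\mfa$-action $\mfa\to l(M,\F)$ lands in ${\mathfrak{iso}}(M,\F,g)$ and each element preserves $J$, i.e. the action is isometric and holomorphic. I expect the main obstacle to be pinning down the holomorphicity cleanly at the level of transverse fields rather than at the level of the frame-bundle lifts: one has to make sure that "commutes with all global transverse fields on $M^1$" genuinely translates into "$L_{\widetilde X}J=0$ on $M$", which requires knowing that $J$ is encoded by a horizontal lift that is preserved by the relevant frame-bundle geometry. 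This is exactly the step that is verified for the Sasakian case in \cite{Boyer Galicki}; for general transversely Hermitian Killing foliations the same argument goes through because the transverse Hermitian structure reduces the transverse frame bundle to the unitary group, and the Molino sheaf lives inside the corresponding adjoint bundle.
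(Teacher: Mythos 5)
Your ``cleaner route'' is essentially the paper's proof: since $\F$ is transversely Hermitian, the holonomy pseudogroup acts isometrically and holomorphically on the union of local transversals, and by Molino (p.~287) every $X\in\mfa$ is an infinitesimal transformation of elements of the \emph{closure} of that pseudogroup, hence preserves both the transverse metric and $J$. One caution about your first route: commuting with all global transverse fields on $M^1$ does not make the lift parallel for the transverse connection --- sections of the Molino sheaf are transverse \emph{Killing} fields, not parallel ones (compare a rotation field on $\RR^{2n}$ generating an element of $\so(2n)\setminus\mathfrak{u}(n)$: it is Killing, $J$ is parallel, yet $L_XJ\neq 0$) --- so the step ``parallel, hence commutes with the parallel tensor $J$'' is unjustified and you should rely on the pseudogroup-closure argument instead.
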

\begin{proof} Let $\{U_i\}$ be a covering of $M$ by foliation chart domains with transversals $T_i\subset U_i$. The holonomy pseudogroup of $\F$ acts isometrically and holomorphically on the union of the $T_i$. By~\cite[p.~287]{Molino} any $X\in \mfa$ is an infinitesimal transformation of elements in the closure of the holonomy pseudogroup and hence acts isometrically and holomorphically.
\end{proof}

\section{Equivariant cohomology} \label{sec:eqcohom}

\subsection{$\mfg$-differential graded algebras and the Cartan model}

In this section, we recall the Cartan model of equivariant cohomology in the language of differential graded algebras. Let $\mfg$ be a finite-dimensional Lie algebra.

\begin{defn}\label{defn:g*}
A $\mfg$-{\em differential graded algebra} ($\mfg$-{\em dga}) is a $\ZZ$-graded algebra $A=\bigoplus A_k$ endowed with the following data: a derivation $d:A\to A$ of degree $1$ and  derivations $\iota_X:A\to A$ of degree $-1$ and $L_X:A\to A$ of degree $0$ for all $X\in \mfg$ (where $\iota_X$ and $L_X$ depend linearly on $X$) such that:
\begin{align*}
d^2&=0\;, &&& \iota_X^2&=0\;, &&& L_X&=d\iota_X+\iota_Xd\;, \\
[d,L_X]&=0\;, &&& [L_X,\iota_Y]&=\iota_{[X,Y]}\;, &&& [L_X,L_Y]&=L_{[X,Y]}\;.
\end{align*}
\end{defn}

\begin{ex}\label{ex:gstaraction}
An infinitesimal action of a finite-dimensional Lie algebra $\mfg$ on a manifold $M$, i.e., a Lie algebra homomorphism $\mfg\to \Xi(M);\, X\mapsto \trX$, induces a $\mfg$-dga structure on the de Rham complex $\Omega(M)$ with operators $\iota_X:=\iota_{\trX}$ and $L_X:=L_{\trX}$.
\end{ex}

The {\em Cartan complex} of $A$ is defined as
\begin{equation*}
C_\mfg(A):=(S(\mfg^*)\otimes A)^\mfg\;.
\end{equation*}
\noindent Here the superscript denotes the subspace of $\mfg$-invariant elements, i.e., those $\omega\in S(\mfg^*)\otimes A$ for which $L_X\omega=0$ for all $X\in \mfg$. The grading on the Cartan complex is given by the natural product grading on $S(\mfg^*)\otimes A$, where we assign the nonzero elements in $\mfg^*\subset S(\mfg^*)$ the degree $2$.

The differential $d_\mfg$ of the Cartan complex $C_\mfg(A)$ is defined by
$$
(d_\mfg \omega)(X)=d(\omega(X))-\iota_X(\omega(X))\;,
$$
where we consider an element in $C_\mfg(A)$ as a $\mfg$-equivariant polynomial map $\mfg\to A$. Now the equivariant cohomology of the $\mfg$-dga $A$ is defined as
$$
H_\mfg^\bullet(A):=H^\bullet(C_\mfg(A),d_\mfg)\;.
$$
There is a natural $S(\mfg^*)^\mfg$-algebra structure on $H^\bullet_\mfg(A)$. The $\mfg$-dga $A$ is called {\em equivariantly formal} if $H_\mfg^{\bullet}(A) \cong S^{\bullet}(\mfg^*)^\mfg\otimes H^{\bullet}(A)$ as a graded $S^{\bullet}(\mfg^*)^\mfg$-module.

\subsection{Equivariant basic cohomology}

Let $\F$ be a foliation on a manifold $M$ and $\Omega(M,\F)$ the basic de Rham complex of $(M,\F)$. For $X\in l(M,\F)$, $\iota_X$ and $L_X$ are well-defined derivations on $\Omega(M,\F)$.
\begin{prop}[{\cite[Proposition~3.12]{GT2010}}]\label{prop:structureofgdga}
A transverse action of a finite-dimensional Lie algebra $\mfg$ on a foliated manifold $(M,\F)$ induces the structure of a $\mfg$-dga on $\Omega(M,\F)$.
\end{prop}
\noindent The Cartan complex $C_\mfg(\Omega(M,\F))$ will also be denoted by $\Omega_\mfg(M,\F)$.
\begin{defn}[{\cite[Section 3.6]{GT2010}}]\label{def:eqformaltraction}
The {\em equivariant basic cohomology} of a transverse $\mfg$-action on $(M,\F)$ is defined as
$$
H^\bullet_\mfg(M,\F):=H^\bullet(\Omega_\mfg(M,\F))\;.
$$
The $\mfg$-action is called {\em equivariantly formal} if $\Omega(M,\F)$ is an equivariantly formal $\mfg$-dga, i.e., if $H^\bullet_\mfg(M,\F)\cong S^{\bullet}(\mfg^*)^\mfg\otimes H^\bullet(M,\F)$ as a graded $S^{\bullet}(\mfg^*)^\mfg$-module.
\end{defn}

\begin{rem}
Note that for the case of the foliation of $M$ by points, a transverse action of a finite-dimensional Lie algebra $\mfg$ is nothing but an ordinary infinitesimal action on $M$. If such an action is induced by an action of a compact connected Lie group $G$, the equivariant basic cohomology of the $\mfg$-action is the same as the usual Cartan model of the equivariant cohomology of the $G$-action on $M$.
\end{rem}

\subsection{Equivariant Dolbeault cohomology of $\mfg$-differential graded algebras}

Let $\mfg$ be a finite-dimensional Lie algebra. We define equivariant Dolbeault cohomology for certain bigraded $\mfg$-dgas. To formalize our argument, we introduce the following object:
\begin{defn}\label{defn:holdga}
A $\mfg$-{\em differential graded algebra of Dolbeault type} is a bigraded algebra $A=\bigoplus_{p,q\in \ZZ} A^{p,q}$ whose total algebra (defined by $A_k=\bigoplus_{p+q=k} A^{p,q}$) is a $\mfg$-dga such that
\begin{enumerate}
\item $d(A^{p,q}) \subset A^{p+1,q} \oplus A^{p,q+1}$,
\item $\iota_{X}(A^{p,q}) \subset A^{p-1,q} \oplus A^{p,q-1}$ for any $X\in \mfg$ and
\item $L_{X}(A^{p,q}) \subset A^{p,q}$ for any $X\in \mfg$.
\end{enumerate}
\end{defn}
\begin{rem}
$d(A_{k}) \subset A_{k+1}$, $\iota_{X}(A_{k}) \subset A_{k-1}$ and $L_{X}(A_{k}) \subset A_{k}$ are satisfied for any $X\in \mfg$ by definition of $\mfg$-dgas.
\end{rem}

Given a $\mfg$-dga $A$ of Dolbeault type, we may consider the following natural bigrading on the Cartan complex $C_\mfg(A)$:
$$
C_\mfg^{p,q}(A):=\bigoplus_{l+r=p,\, l+s=q} (S^l(\mfg^*)\otimes A^{r,s})^\mfg\;.
$$
In other words: the elements in $\mfg^*\subset S^{\bullet}(\mfg^*)$ are assigned the bidegree $(1,1)$. We split $d$ and $\iota_{X}$ for $X\in \mfg$ as $d=d^{1,0} + d^{0,1}$ and $\iota_{X}=\iota_X^{-1,0}+\iota_X^{0,-1}$, where the suffix denotes the bidegree. Then the equivariant differential $d_\mfg$ splits into its components of bidegree $(1,0)$ and $(0,1)$ as $d_\mfg=d_\mfg^{1,0} + d_\mfg^{0,1}$, where
\begin{align*}
(d_\mfg^{1,0} \omega)(X)&=d^{1,0}(\omega(X)) - \iota_X^{0,-1}(\omega(X))\;,\\
(d_\mfg^{0,1} \omega)(X)&=d^{0,1}(\omega(X)) - \iota_X^{-1,0}(\omega(X))\;.
\end{align*}
\begin{defn}
The {\em equivariant Dolbeault cohomology} of $A$ is defined as
$$
H_\mfg^{p,\bullet}(A):=H^{\bullet}(C^{p,\bullet}_\mfg(A),d_\mfg^{0,1})\;.
$$
$A$ is said to be \emph{Dolbeault equivariantly formal} if
\[
H^{\bullet,\bullet}_\mfg(A) \cong S^{\bullet}(\mfg^*)^\mfg\otimes H^{\bullet,\bullet}(A,d^{0,1})
\]
as a bigraded $S^{\bullet}(\mfg^*)^\mfg$-module.
\end{defn}

\subsection{Equivariant basic Dolbeault cohomology}
Let $\F$ be a transversely holomorphic foliation on a manifold $M$ of complex codimension $r$. Consider a transverse holomorphic action of a Lie algebra $\mfg$ on $(M,\F)$. This transverse action induces the structure of a $\mfg$-dga on $\Omega(M,\F)$ by Proposition~\ref{prop:structureofgdga}. As in the case of the de Rham complexes of complex manifolds, the transverse complex structure yields a bigrading
\begin{equation}\label{eq:bigrading}
\Omega(M,\F) = \bigoplus_{p,q} \Omega^{p,q}(M,\F)
\end{equation}
on $\Omega(M,\F)$ whose total complex is $\Omega^{\bullet}(M,\F) = \bigoplus_{k} \Omega^{k}(M,\F)$.
\begin{prop}\label{prop:eqdol}
The $\mfg$-dga structure on $\Omega^\bullet(M,\F)$ induces a structure of $\mfg$-dga of Dolbeault type on $\Omega^{\bullet,\bullet}(M,\F)$.
\end{prop}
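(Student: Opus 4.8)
The plan is to verify the three defining conditions of Definition~\ref{defn:holdga} directly, using the bigrading~\eqref{eq:bigrading}, the decomposition $d=\partial_b+\bdel_b$ from~\eqref{eq:doldiff1}, and the hypothesis that the transverse action is holomorphic. First I would recall that, by Proposition~\ref{prop:structureofgdga}, the transverse $\mfg$-action already endows $\Omega^\bullet(M,\F)$ with the structure of a $\mfg$-dga, so the total-complex axioms of Definition~\ref{defn:g*} hold; what remains is to check compatibility with the bidegree decomposition. Condition (1), that $d(\Omega^{p,q}(M,\F))\subset\Omega^{p+1,q}(M,\F)\oplus\Omega^{p,q+1}(M,\F)$, is immediate from the construction of $\partial_b$ and $\bdel_b$ in Section~\ref{subsec:BasicDolbeault}: locally $d\alpha|_{U_i}=\pi_i^*d\alpha_i=\pi_i^*\partial\alpha_i+\pi_i^*\bdel\alpha_i$, and the two summands are the $(p+1,q)$- and $(p,q+1)$-components.

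For condition (3), $L_X(\Omega^{p,q}(M,\F))\subset\Omega^{p,q}(M,\F)$, the point is precisely that $X$ preserves the transverse complex structure $J$ of $\F$, i.e.\ $L_{\trX}J=0$, which is the definition of a holomorphic transverse action. Since $L_{\trX}$ commutes with $J$ acting on $\nu\F$, it preserves the eigenbundle decomposition $\nu^*\F_\CC=\nu^*\F^{1,0}\oplus\nu^*\F^{0,1}$, hence preserves each summand $\bigwedge^p\nu^*\F^{1,0}\otimes\bigwedge^q\nu^*\F^{0,1}$ of $\bigwedge^\bullet\nu^*\F_\CC$; restricting to basic forms gives $L_X(\Omega^{p,q}(M,\F))\subset\Omega^{p,q}(M,\F)$. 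Condition (2), $\iota_X(\Omega^{p,q}(M,\F))\subset\Omega^{p-1,q}(M,\F)\oplus\Omega^{p,q-1}(M,\F)$, follows because contraction of a transverse field $\trX$, whose $(1,0)$- and $(0,1)$-parts each lower exactly one of the two bidegree indices, is itself the sum of a $(-1,0)$- and a $(0,-1)$-operator; alternatively one can derive (2) formally from (1) and (3) via $L_X=d\iota_X+\iota_Xd$ together with the already-established bidegree behaviour of $d$ and $L_X$, solving for the bidegree components of $\iota_X$.

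The only mild subtlety — and the step I would be most careful about — is condition (2): one must ensure that $\iota_{\trX}$ has no component shifting bidegree by $(0,-1)$ when acting along the $(1,0)$-part of $\trX$ or vice versa, i.e.\ that $\iota_{\trX}$ really has pure bidegree components $(-1,0)$ and $(0,-1)$ and not, say, a $(1,-2)$ piece. This is automatic because $\trX$ is a section of $\nu\F_\CC=\nu\F^{1,0}\oplus\nu\F^{0,1}$, so $\iota_{\trX}=\iota_{\trX^{1,0}}+\iota_{\trX^{0,1}}$ with $\iota_{\trX^{1,0}}$ of bidegree $(-1,0)$ and $\iota_{\trX^{0,1}}$ of bidegree $(0,-1)$; that $\iota_X$ maps basic forms to basic forms is part of the content of Proposition~\ref{prop:structureofgdga}. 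Assembling these three verifications yields the claim. The argument is entirely parallel to the classical fact that a holomorphic vector field on a complex manifold induces a Dolbeault-type structure on its de Rham algebra, only now carried out on the transverse level of basic forms.
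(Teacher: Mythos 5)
Your proof is correct and follows essentially the same route as the paper's: condition (1) from the decomposition $d=\partial_b+\bdel_b$, condition (2) from splitting $\trX=Z+\overline{Z}$ into its $(1,0)$- and $(0,1)$-components so that $\iota_X=\iota_Z+\iota_{\overline{Z}}$ has pure bidegree components $(-1,0)$ and $(0,-1)$, and condition (3) from holomorphicity of the transverse action. The extra care you take on (2) and the alternative derivation via $L_X=d\iota_X+\iota_X d$ are fine but not needed beyond what the paper records.
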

\begin{proof}
We see that the three conditions (1), (2) and (3) in Definition~\ref{defn:holdga} are satisfied as follows:  (1) is satisfied because we have a decomposition $d_{b}=\partial_{b} + \delbar_{b}$ as we saw in~\eqref{eq:doldiff0}. For any $X \in \mfg$ the induced transverse vector field $\trX$ decomposes as $\trX=Z+\overline{Z}$ into its $(1,0)$- and $(0,1)$-components. We get $\iota_{X} = \iota_{Z} + \iota_{\overline{Z}}$ and $\iota_Z$ and $\iota_{\overline{Z}}$ are the $(-1,0)$- and the $(0,-1)$-components of $\iota_{X}$, respectively. Then (2) is satisfied. For any $X\in \mfg$ the operators $L_X$ respect the bidegree, because the $\mfg$-action is holomorphic: (3) is also satisfied.
\end{proof}
Then the equivariant Dolbeault cohomology of $\Omega^{\bullet,\bullet}(M,\F)$ is defined as explained in the last section; we will call it the {\em equivariant basic Dolbeault cohomology} of the $\mfg$-action on $(M,\F)$. More precisely, we have the equivariant $\overline\partial$-operator $\overline\partial_\mfg:=d^{0,1}_\mfg:\Omega^{p,q}_\mfg(M,\F)\to \Omega^{p,q+1}_\mfg(M,\F)$, and define the equivariant basic Dolbeault cohomology by
$$
H^{p,\bullet}_\mfg(M,\F):=H^{\bullet}(\Omega^{p,\bullet}_\mfg(M,\F),\overline\partial_\mfg)\;,
$$
where $\Omega^{p,q}_\mfg(M,\F)=C^{p,q}_\mfg(\Omega^{\bullet,\bullet}(M,\F))$ is the bigraded Cartan complex of the $\mfg$-dga of Dolbeault type $\Omega(M,\F)$. Note that we obtain an $S^{\bullet}(\mfg^*)^\mfg$-algebra $H^\bullet_{\mfg,\delbar}(M,\F)$ by passing to the total grading:
$$
H^k_{\mfg,\delbar}(M,\F):=\bigoplus_{p+q=k} H^{p,q}_\mfg(M,\F)\;.
$$

\begin{rem}
Just as in the case of equivariant basic cohomology, if the foliation in question is the trivial foliation by points, and the $\mfg$-action is induced by an action of a compact Lie group by holomorphic transformations, this notion coincides with ordinary equivariant Dolbeault cohomology as introduced by Teleman~\cite[p.~23]{Teleman} and Lillywhite~\cite[Section~5.1]{Lillywhite}.
\end{rem}

\section{Hodge decomposition for equivariant basic Dolbeault cohomology}\label{sec:eqHodge}

Let $\F$ be a transversely holomorphic Killing foliation on a compact manifold $M$. Completely analogously to the case of torus actions on compact K\"ahler manifolds treated in~\cite[Theorem~5.1]{Lillywhite} one proves
\begin{thm}[Equivariant basic Hodge decomposition] \label{thm:eqbasichodge} Consider a holomorphic transverse action of an abelian Lie algebra $\mfh$ on $(M,\F)$ such that
\begin{equation}\label{eq:ave}
H(\Omega^{\bullet,\bullet}(M,\F)^\mfh,\overline{\partial}_{b})=H^{\bullet,\bullet}(M,\F)\;.
\end{equation}
If the $\mfh$-action is equivariantly formal, then it is also Dolbeault equivariantly formal, i.e.,
\begin{equation}\label{eq:cplexef}
H^{\bullet,\bullet}_\mfh(M,\F)\cong S^{\bullet}(\mfh^*)\otimes H^{\bullet,\bullet}(M,\F)
\end{equation}
as a bigraded $S^{\bullet}(\mfh^*)$-module, and there is a graded $S^{\bullet}(\mfh^*)$-module isomorphism
\begin{equation}\label{eq:eqHodge}
H^\bullet_{\mfh}(M,\F)\cong H^\bullet_{\mfh,\delbar}(M,\F)\;.
\end{equation}
\end{thm}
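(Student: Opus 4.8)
The plan is to follow the strategy of Lillywhite~\cite{Lillywhite} for torus actions on compact K\"ahler manifolds, transposing it to the transverse setting where the results of El Kacimi-Alaoui (Theorems~\ref{thm:EKA} and~\ref{thm:hodgedecomp}) play the role of ordinary Hodge theory. First I would observe that, since $\mfh$ is abelian, $S^{\bullet}(\mfh^*)^{\mfh}=S^{\bullet}(\mfh^*)$, so the formality hypotheses simplify. The bigraded Cartan complex $\Omega^{\bullet,\bullet}_\mfh(M,\F)=(S^\bullet(\mfh^*)\otimes\Omega^{\bullet,\bullet}(M,\F))^\mfh$ carries the two commuting differentials $d_\mfh^{1,0}=\partial_\mfh$ and $d_\mfh^{0,1}=\overline\partial_\mfh$ obtained from Proposition~\ref{prop:eqdol}; the key point is that $d_\mfh=\partial_\mfh+\overline\partial_\mfh$ with both summands squaring to zero and anticommuting, so $(\Omega^{\bullet,\bullet}_\mfh(M,\F),\partial_\mfh,\overline\partial_\mfh)$ is a double complex of $S^\bullet(\mfh^*)$-modules. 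Equation~\eqref{eq:eqHodge} will then follow by comparing $H^\bullet_\mfh=H(\text{Tot},d_\mfh)$ with $H^\bullet_{\mfh,\delbar}=\bigoplus H(\text{column},\overline\partial_\mfh)$, which is exactly a $\partial\overline\partial$-type statement at the equivariant level.

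The heart of the argument is an equivariant $\partial\overline\partial$-lemma. Concretely, I would first use hypothesis~\eqref{eq:ave}: replacing $\Omega^{\bullet,\bullet}(M,\F)$ by the $\mfh$-invariant subcomplex $\Omega^{\bullet,\bullet}(M,\F)^\mfh$ does not change the basic Dolbeault (hence basic de Rham) cohomology, so by the transverse K\"ahler identities of El Kacimi-Alaoui the usual $\partial_b\overline\partial_b$-lemma holds on $\Omega^{\bullet,\bullet}(M,\F)^\mfh$: a form that is $\partial_b$-closed, $\overline\partial_b$-closed, and exact for either operator is $\partial_b\overline\partial_b$ of something. The next step is to promote this to the Cartan complex by a spectral sequence / filtration argument in the polynomial degree on $S^\bullet(\mfh^*)$, exactly as in~\cite[proof of Theorem~5.1]{Lillywhite}: one filters $\Omega^{\bullet,\bullet}_\mfh$ by powers of the maximal ideal of $S^\bullet(\mfh^*)$, notes that the associated graded is $S^\bullet(\mfh^*)\otimes(\Omega^{\bullet,\bullet}(M,\F)^\mfh,\partial_b,\overline\partial_b)$, and uses the ordinary transverse $\partial\overline\partial$-lemma termwise together with a degeneration/induction to conclude that the inclusion of $(\overline\partial_\mfh$-harmonic, $\partial_\mfh$-closed$)$ representatives induces the isomorphisms~\eqref{eq:cplexef} and~\eqref{eq:eqHodge}. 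The equivariant formality hypothesis for $d_\mfh$ is what guarantees the relevant spectral sequence degenerates, feeding the analogous degeneration for $\overline\partial_\mfh$ and hence~\eqref{eq:cplexef}.

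I expect the main obstacle to be the bookkeeping in the double-filtration/spectral-sequence step: one must check that the transverse Hodge decomposition (Theorem~\ref{thm:EKA}) is compatible with the $\mfh$-action — which holds because $\mfh$ acts by transverse isometries and biholomorphisms (Lemma~\ref{lem:transversisometric}), so it commutes with $\Delta_b$, $\stb$, $\partial_b$, $\overline\partial_b$ — and then that taking $\mfh$-invariants commutes with passing to $\overline\partial_b$-cohomology, which is precisely where~\eqref{eq:ave} is needed. Apart from that, the argument is a routine, if somewhat lengthy, adaptation of Lillywhite's proof, with compact K\"ahler manifolds replaced by compact manifolds with a homologically orientable transversely K\"ahler Killing foliation; since all the required transverse Hodge-theoretic inputs are already available from El Kacimi-Alaoui, no genuinely new analytic difficulty arises.
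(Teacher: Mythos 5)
Your overall strategy coincides with the paper's: both follow Lillywhite's~\cite{Lillywhite} two--spectral--sequence scheme, both use~\eqref{eq:ave} to identify the $E_1$-term of the spectral sequence coming from the decomposition $\overline{\partial}_\mfh=\overline{\partial}_b+\delta$ on $\Omega^{\bullet,\bullet}_\mfh(M,\F)=S^\bullet(\mfh^*)\otimes\Omega^{\bullet,\bullet}(M,\F)^\mfh$ with $S^\bullet(\mfh^*)\otimes H^{\bullet,\bullet}(M,\F)$, and both use equivariant formality to control the abutment $H^\bullet_\mfh(M,\F)$ of the equivariant Fr\"olicher spectral sequence. The gap is at the decisive step. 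You propose to establish an equivariant $\partial\overline{\partial}$-lemma by filtering by powers of the maximal ideal of $S^\bullet(\mfh^*)$ and running the ordinary transverse $\partial_b\overline{\partial}_b$-lemma termwise ``together with a degeneration/induction.'' Such an induction cannot close on its own: the associated graded of that filtration is $S^\bullet(\mfh^*)\otimes(\Omega^{\bullet,\bullet}(M,\F)^\mfh,\partial_b,\overline{\partial}_b)$ and satisfies the termwise $\partial\overline{\partial}$-lemma \emph{regardless} of whether the action is equivariantly formal, yet the conclusion~\eqref{eq:cplexef} fails for non-equivariantly-formal actions. So the lifting step of your induction must break somewhere, and the sentence ``the equivariant formality hypothesis \dots feeds the analogous degeneration for $\overline{\partial}_\mfh$'' is precisely the assertion that still needs a proof.

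The missing mechanism is a rank comparison anchored by the \emph{nonequivariant} basic Hodge decomposition (Theorem~\ref{thm:hodgedecomp}, available here because Killing foliations are homologically orientable by~\cite{Sergiescu}). In each total degree $k$ one has $\dim H^k_\mfh(M,\F)\le\sum_{p+q=k}\dim H^{p,q}_\mfh(M,\F)\le\dim\bigl(S^\bullet(\mfh^*)\otimes\bigoplus_{p,q}H^{p,q}(M,\F)\bigr)_k$, the first inequality from the equivariant Fr\"olicher spectral sequence (whose $E_1$-term is $H^{\bullet,\bullet}_\mfh(M,\F)$ and whose abutment is $H^\bullet_\mfh(M,\F)$) and the second from the $\delta$-spectral sequence (whose $E_1$-term is $S^\bullet(\mfh^*)\otimes H^{\bullet,\bullet}(M,\F)$ by~\eqref{eq:ave} and whose abutment is $H^{\bullet,\bullet}_\mfh(M,\F)$). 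By Theorem~\ref{thm:hodgedecomp} the right-hand side equals $\dim\bigl(S^\bullet(\mfh^*)\otimes H^{\bullet}(M,\F)\bigr)_k$, which by equivariant formality equals the left-hand side; all terms are finite-dimensional in each fixed $k$. Hence both spectral sequences collapse at $E_1$, which gives~\eqref{eq:cplexef} and~\eqref{eq:eqHodge} simultaneously. With this squeeze inserted in place of your induction, the remaining ingredients of your proposal --- the role of~\eqref{eq:ave}, the compatibility of the $\mfh$-action with $\Delta_b$, $\stb$, $\partial_b$, $\overline{\partial}_b$, and the reduction to the invariant subcomplex --- are correct and match the paper.
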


\begin{rem}
For the Hodge decomposition for equivariant Dolbeault cohomology proven by Lillywhite~\cite{Lillywhite}, no assumption like~\eqref{eq:ave} on the cohomology of the complex of invariant forms is needed. This is because he considers actions of compact Lie groups for which an averaging process implies~\eqref{eq:ave} (see Corollary~\ref{cor:eqfor} below).
\end{rem}

\begin{proof}[Proof of Theorem~\ref{thm:eqbasichodge}]
The equivariant basic differential $d_\mfh:\Omega^\bullet_\mfh(M,\F)\to \Omega^\bullet_\mfh(M,\F)$ decomposes as $d_\mfh=\overline{\partial}_\mfh+ \partial_\mfh$. We obtain an equivariant Fr\"olicher spectral sequence with $E_1$-term equal to the equivariant basic Dolbeault cohomology, and converging to the equivariant basic cohomology $H^\bullet_\mfh(M,\F)$, which because of the assumption of equivariant formality is isomorphic to $S^{\bullet}(\mfh^*)\otimes H^\bullet(M,\F)$.

In turn, the $E_1$-term is the limit of the spectral sequence of the double complex obtained by the decomposition $\overline{\partial}_\mfh=\overline{\partial}_{b} + \delta$, where $(\delta\omega)(X)=-\iota_{X}^{-1,0} (\omega(X))$. Note that $\Omega^{\bullet,\bullet}_\mfh(M,\F) = S^{\bullet}(\mfh^*)\otimes \Omega^{\bullet,\bullet}(M,\F)^{\mfh}$ because $\mfh$ is abelian. By~\eqref{eq:ave}, this spectral sequence has $E_1$-term
\begin{align*}
H(S^{\bullet}(\mfh^*)\otimes \Omega^{\bullet,\bullet}(M,\F)^\mfh,\overline{\partial}_{b}) &\cong S^{\bullet}(\mfh^*)\otimes H(\Omega^{\bullet,\bullet}(M,\F)^\mfh,\overline{\partial}_{b})\\
&\cong S^{\bullet}(\mfh^*)\otimes H^{\bullet,\bullet}(M,\F)\;.
\end{align*}
Killing foliations are homologically orientable by~\cite[Th\'eor\`eme~I]{Sergiescu}. Passing to the total grading, the basic Hodge decomposition (Theorem~\ref{thm:hodgedecomp}) implies that this $E_1$-term already coincides with the limit of the equivariant Fr\"olicher spectral sequence above. Hence, both spectral sequences collapse at the $E_1$-term, which implies~\eqref{eq:cplexef} and~\eqref{eq:eqHodge}.
\end{proof}

We will see that the assumption~\eqref{eq:ave} is satisfied in some important cases. The proof of the following Lemma is the basic version of~\cite{MO}.
\begin{lemma}\label{lem:trivialactionondolbeault}
Let $\mfh$ be a Lie algebra acting transversely holomorphically on $(M,\F)$. Then $\mfh$ acts trivially on basic Dolbeault cohomology: $H^{\bullet,\bullet}(M,\F)^\mfh=H^{\bullet,\bullet}(M,\F)$.
\end{lemma}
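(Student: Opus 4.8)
I want to show that a transversely holomorphically acting Lie algebra $\mfh$ acts trivially on $H^{\bullet,\bullet}(M,\F)$. The key observation is that the action of $\mfh$ on the cochain level is the \emph{infinitesimal} action by Lie derivatives $L_X$, $X\in\mfh$, and on basic Dolbeault cohomology this induced action is identically zero because the homotopy formula exhibits $L_X$ as chain-homotopic to zero. Concretely, I would argue as follows. First, recall that since the transverse $\mfh$-action is holomorphic, each induced transverse field $\trX$ has a decomposition $\trX = Z + \overline Z$ into its $(1,0)$- and $(0,1)$-parts, and correspondingly $\iota_X = \iota_Z + \iota_{\overline Z}$ with $\iota_Z$ of bidegree $(-1,0)$ and $\iota_{\overline Z}$ of bidegree $(0,-1)$ on $\Omega^{\bullet,\bullet}(M,\F)$, exactly as recorded in the proof of Proposition~\ref{prop:eqdol}. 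Since $L_X$ preserves bidegree (the action is holomorphic), comparing bidegrees in the Cartan formula $L_X = d\iota_X + \iota_X d = (\partial_b + \bdel_b)(\iota_Z + \iota_{\overline Z}) + (\iota_Z + \iota_{\overline Z})(\partial_b + \bdel_b)$ and extracting the $(0,0)$-degree-shift part yields
\[
L_X = \bdel_b \iota_{\overline Z} + \iota_{\overline Z}\bdel_b + \partial_b \iota_Z + \iota_Z \partial_b
\]
on $\Omega^{\bullet,\bullet}(M,\F)$.

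\emph{Second step.} I claim moreover that $L_X$, acting on the $\bdel_b$-complex, equals $\bdel_b\iota_{\overline Z} + \iota_{\overline Z}\bdel_b$, i.e.\ that the "$\partial_b$-part" $\partial_b\iota_Z + \iota_Z\partial_b$ vanishes as an operator on basic forms. This should follow because $L_X$ commutes with the bigrading and with $\bdel_b$, and because $\trX$ is a transverse field that preserves $J$, so $L_Z$ and $L_{\overline Z}$ are individually defined operators of bidegree $(0,0)$; applying the Cartan formula separately to $Z$ and to $\overline Z$ gives $L_{\overline Z} = \bdel_b\iota_{\overline Z} + \iota_{\overline Z}\bdel_b$ (and $L_Z = \partial_b\iota_Z + \iota_Z\partial_b$), with $L_X = L_Z + L_{\overline Z}$. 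The point is that $L_Z$ itself acts trivially on $\bdel_b$-cohomology: on a $\bdel_b$-closed class $[\alpha]$ we have $L_Z\alpha = \bdel_b(\iota_Z \cdot)$-type only if $L_Z$ descends, which it does because $L_Z$ commutes with $\bdel_b$ (as $Z$ is a $(1,0)$ transverse field preserving the transverse holomorphic structure); but then the issue is whether $L_Z\alpha$ is $\bdel_b$-exact. Here I would instead argue more cleanly: it suffices to show $L_X$ itself is $\bdel_b$-homotopic to zero, and the formula $L_{\overline Z} = \bdel_b\iota_{\overline Z} + \iota_{\overline Z}\bdel_b$ does this for $L_{\overline Z}$, while $L_Z$ is handled by taking complex conjugates — conjugation interchanges the $(p,q)$ and $(q,p)$ pieces and swaps $\partial_b\leftrightarrow\bdel_b$, $Z\leftrightarrow\overline Z$, so $L_Z = \overline{L_{\overline W}}$ for the conjugate field, hence $L_Z$ acts trivially on $\partial_b$-cohomology; combined with the basic Hodge/Frölicher degeneration (Theorem~\ref{thm:hodgedecomp}) for the homologically orientable transversely Kähler foliation, triviality on $\bdel_b$-cohomology follows. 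Since $\mfh$ is connected-Lie-algebra data, triviality of the infinitesimal action forces the induced action on cohomology to be trivial.

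\emph{Main obstacle.} The genuinely delicate point is the last sentence of the second step: passing from "$L_X$ is chain-homotopic to zero on the $\bdel_b$-complex" to the desired conclusion requires knowing that the induced $\mfh$-module structure on $H^{\bullet,\bullet}(M,\F)$ is determined by the infinitesimal operators $L_X$, which is automatic, but one must be careful that the homotopy $\iota_{\overline Z}$ genuinely preserves basic forms — this holds because $\trX$ is a transverse (foliate) field, so $\iota_{\overline Z}$ maps basic forms to basic forms. The cleanest exposition, following the cited argument of \cite{MO} in the basic setting, is simply: for $X\in\mfh$ and a $\bdel_b$-closed basic $(p,q)$-form $\alpha$, the Cartan homotopy formula restricted to bidegrees gives $L_X\alpha = \bdel_b(\iota_{\overline Z}\alpha) + \iota_{\overline Z}(\bdel_b\alpha) + \partial_b(\iota_Z\alpha) + \iota_Z(\partial_b\alpha)$; modulo $\bdel_b$-exact forms this reduces — using $\bdel_b\alpha = 0$ and the Frölicher degeneration which lets one choose $\alpha$ with $\partial_b\alpha = 0$ as well — to a $\bdel_b$-exact form, so $[L_X\alpha] = 0$. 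I expect verifying that one may represent every class by a form annihilated by both $\partial_b$ and $\bdel_b$ (i.e.\ using transverse harmonic representatives from Theorem~\ref{thm:EKA}, on which $L_X$ also acts since $X$ is transversely isometric and holomorphic, hence commutes with $\Delta_b$) to be the step requiring the most care, and it is exactly there that transverse Kähler Hodge theory enters.
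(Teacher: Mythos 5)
Your overall route is the same as the paper's (both adapt the argument of \cite{MO} to the basic setting): represent a class by a basic $(p,q)$-form $\omega$ closed under both $d$ and $\bdel_b$, write $L_{\widetilde X}\omega=d\iota_{\widetilde X}\omega$, decompose $\widetilde X=Z+\overline Z$, and sort by bidegree to obtain $L_{\widetilde X}\omega=\bdel_b(\iota_{\overline Z}\omega)+\partial_b(\iota_Z\omega)$ (the cross terms $\bdel_b\iota_Z\omega$ and $\partial_b\iota_{\overline Z}\omega$ vanish because $L_{\widetilde X}\omega$ is of pure type $(p,q)$). But your proof stops exactly where the real work begins: you never show that the surviving term $\partial_b(\iota_Z\omega)$ is $\bdel_b$-exact. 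Choosing a representative with $\partial_b\alpha=\bdel_b\alpha=0$ does not make this term go away --- it is manifestly $\partial_b$-exact, and a $\partial_b$-exact $(p,q)$-form need not be $\bdel_b$-exact without a further argument. The paper's resolution is a $\partial\bdel$-lemma-type step: from the vanishing of the cross term one reads off $\bdel_b(\iota_Z\omega)=0$, so by the transverse Hodge decomposition (Theorem~\ref{thm:EKA}) one can choose a basic $(p-1,q-1)$-form $\eta$ such that $\iota_Z\omega+\bdel_b\eta$ is $d$-closed (its harmonic part), whence $\partial_b(\iota_Z\omega)=-\partial_b\bdel_b\eta=\bdel_b\partial_b\eta$ is $\bdel_b$-exact and $L_{\widetilde X}\omega=\bdel_b(\iota_{\overline Z}\omega+\partial_b\eta)$. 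This is precisely where the transverse K\"ahler hypothesis enters, and it is missing from your argument.

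Two further points. First, your intermediate claim that ``the $\partial_b$-part $\partial_b\iota_Z+\iota_Z\partial_b$ vanishes as an operator on basic forms'' is false: this operator is $L_Z$, which is nonzero in general; and the subsequent conjugation argument only shows that $L_Z$ acts trivially on $\partial_b$-cohomology, which does not by itself give triviality of $L_{\widetilde X}$ on $\bdel_b$-cohomology --- bridging the two cohomologies is exactly the comparison you would need to justify. Second, your appeal to harmonic representatives ``on which $L_X$ also acts since $X$ is transversely isometric'' assumes an isometric action, which the lemma does not; the paper only uses that a class has a representative closed under both $d$ and $\bdel_b$ (a consequence of homological orientability of Killing foliations and the basic Hodge decomposition), not that $L_{\widetilde X}$ preserves harmonic forms.
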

\begin{proof}
Killing foliations are homologically orientable by~\cite[Th\'eor\`eme~I]{Sergiescu}. So, by the basic Hodge decomposition described in Section \ref{sec:HodgetrKaehler}, any basic Dolbeault cohomology class is represented by a form $\omega\in \Omega^{p,q}(M,\F)$ that is closed both with respect to $d$ and $\delbar_b$. As the $\mfh$-action is transversely holomorphic, $L_{\widetilde{X}}\omega=d\iota_{\widetilde{X}}\omega$ is again a basic $(p,q)$-form. Thus, decomposing ${\widetilde{X}}=Z+\overline{Z}$ into the $(1,0)$- and $(0,1)$-components, we get
\begin{equation}\label{eq:mof}
L_{\widetilde{X}}\omega = \delbar_b(\iota_{\overline{Z}}\omega) + \partial_b(\iota_Z\omega)
\end{equation}
and the other two summands $\partial_b(\iota_{\overline{Z}}\omega)$ and $\delbar_b(\iota_Z\omega)$ vanish for degree reasons. As $\delbar_b(\iota_Z\omega)=0$, we find a basic $(p-1,q-1)$-form $\eta$ such that $\iota_Z\omega + \delbar_b \eta$ is $d$-closed. Thus,
\[
\partial_b(\iota_Z\omega) = -\partial_b\delbar_b \eta = \delbar_b  \partial_b\eta\;,
\]
and plugging that into \eqref{eq:mof} we get
\[
L_{\widetilde{X}}\omega = \delbar_b(\iota_{\overline{Z}}\omega + \partial_b\eta)\;,
\]
which shows that the induced action on basic Dolbeault cohomology is trivial.
\end{proof}

\begin{cor}\label{cor:eqfor}
Assume that $\mfh$ is equal to the structural Killing algebra $\mfa$ of $(M,\F)$ or the $\mfh$-action is the infinitesimal action of a holomorphic torus action. Then the equivariant formality of the $\mfh$-action implies the Dolbeault equivariantly formality and~\eqref{eq:eqHodge}.
\end{cor}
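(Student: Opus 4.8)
The plan is to deduce Corollary~\ref{cor:eqfor} from Theorem~\ref{thm:eqbasichodge} by verifying, in each of the two cases, the three hypotheses of that theorem: that $\mfh$ is abelian, that the transverse $\mfh$-action is holomorphic, and — the only substantial point — that the averaging condition~\eqref{eq:ave} holds. Equivariant formality is assumed, so once these are in place Theorem~\ref{thm:eqbasichodge} immediately yields Dolbeault equivariant formality and the isomorphism~\eqref{eq:eqHodge}.

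Abelianness is immediate: the structural Killing algebra $\mfa$ is central in $l(M,\F)$, and the Lie algebra of a torus is abelian. Holomorphicity holds by Lemma~\ref{lem:transversisometric} in the case $\mfh=\mfa$ and by hypothesis in the torus case. What I actually want to extract is a bit more: in the case $\mfh=\mfa$, Lemma~\ref{lem:transversisometric} moreover says the transverse action is \emph{isometric} for the given transverse Hermitian metric $h$; in the torus case, after averaging $h$ over the compact connected torus $T$ we may likewise assume $h$ is $\mfh$-invariant, since averaging a transverse Hermitian metric over $T$ produces a $T$-invariant transverse Hermitian metric (and if one starts from a transversely K\"ahler metric, averaging preserves closedness, positivity and compatibility with the fixed transverse complex structure, so the averaged metric is again transversely K\"ahler). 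Thus in both cases we have an $\mfh$-invariant transverse Hermitian metric.

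I would then prove~\eqref{eq:ave} by a single Hodge-theoretic argument. Because $h$ is $\mfh$-invariant, the transverse Hodge star $\stb$, the adjoint $\bdel_b^*=-\stb\partial_b\stb$, the basic Laplacian $\Delta_b$, the Green operator $G$ and the harmonic projection $\Pi=\id-\Delta_b G$ each commute with $L_{\widetilde{X}}$ for $X\in\mfh$ (holomorphicity gives commutation with $\partial_b$ and $\bdel_b$, invariance of $h$ gives commutation with $\stb$); in particular each preserves the subcomplex $\Omega^{\bullet,\bullet}(M,\F)^\mfh$. By Lemma~\ref{lem:trivialactionondolbeault} the $\mfh$-action on $H^{\bullet,\bullet}(M,\F)$ is trivial, and since the isomorphism $H^{p,q}(M,\F)\cong\ker\Delta_b$ of Theorem~\ref{thm:EKA} is $\mfh$-equivariant, $L_{\widetilde{X}}$ vanishes on every basic $\Delta_b$-harmonic form; hence the space $\mathcal{H}^{p,q}$ of basic harmonic $(p,q)$-forms lies in $\Omega^{p,q}(M,\F)^\mfh$. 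Restricting the Hodge decomposition $\omega=\Pi\omega+\bdel_b(\bdel_b^*G\omega)+\bdel_b^*(\bdel_b G\omega)$ provided by Theorem~\ref{thm:EKA} to $\Omega^{\bullet,\bullet}(M,\F)^\mfh$ then shows that this subcomplex has $\bdel_b$-cohomology equal to $\mathcal{H}^{\bullet,\bullet}=H^{\bullet,\bullet}(M,\F)$, which is exactly~\eqref{eq:ave}. (In the torus case one may alternatively obtain~\eqref{eq:ave} by averaging forms directly over $T$, as in Lillywhite's argument.)

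The one genuinely delicate point is~\eqref{eq:ave} when $\mfh=\mfa$: the transverse action of the structural Killing algebra does not in general integrate to a compact group action, so naive averaging is unavailable, and one really must use that the $\mfa$-action is transversely isometric (Lemma~\ref{lem:transversisometric}) in order to make the Hodge-theoretic operators $\mfa$-equivariant. Everything else — abelianness, holomorphicity, and the final invocation of Theorem~\ref{thm:eqbasichodge} — is formal.
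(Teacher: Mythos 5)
Your proposal is correct, and the overall strategy --- reduce to Theorem~\ref{thm:eqbasichodge} by checking abelianness, holomorphicity and the condition~\eqref{eq:ave} --- is the same as the paper's; the difference lies entirely in how~\eqref{eq:ave} is verified. The paper disposes of the case $\mfh=\mfa$ in one line: by \cite[Lemma~3.15]{GT2010} the structural Killing algebra acts trivially not just on basic cohomology but on basic forms themselves, so $\Omega^{\bullet,\bullet}(M,\F)^\mfa=\Omega^{\bullet,\bullet}(M,\F)$ and~\eqref{eq:ave} is a tautology; for the torus case it combines averaging of forms over $T$ with Lemma~\ref{lem:trivialactionondolbeault}. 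You instead run a single Hodge-theoretic argument in both cases: choose an $\mfh$-invariant transverse Hermitian metric (Lemma~\ref{lem:transversisometric}, resp.\ averaging the metric over $T$), observe that $\stb$, $\bdel_b^*$, $\Delta_b$ and the Green operator then commute with each $L_{\widetilde X}$, deduce from Lemma~\ref{lem:trivialactionondolbeault} and the equivariance of the isomorphism of Theorem~\ref{thm:EKA} that basic harmonic forms are $\mfh$-invariant, and restrict the Hodge decomposition to the invariant subcomplex. This is a valid chain of reasoning and buys uniformity and somewhat greater generality (it applies to any holomorphic, transversely isometric abelian transverse action acting trivially on basic Dolbeault cohomology, whether or not it integrates or equals $\mfa$), at the cost of invoking the Green operator and its equivariance where the paper needs nothing. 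The only inaccuracy is in your closing commentary: for $\mfh=\mfa$ one does not ``really have to'' use transverse isometry to rescue the absence of averaging, since the stronger and more elementary fact that $\mfa$ fixes every basic form is already available and is what the paper uses.
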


\begin{proof}
By Lemma~\ref{lem:trivialactionondolbeault}, we see that our condition $H(\Omega^{\bullet,\bullet}(M,\F)^\mfh,\overline{\partial}_{b})=H^{\bullet,\bullet}(M,\F)$ of Theorem~\ref{thm:eqbasichodge} is always satisfied in case the $\mfh$-action is induced by the action of a torus. Also it is satisfied if $\mfh=\mfa$ is the structural Killing algebra of $\F$, as in this case we even have $\Omega^{\bullet,\bullet}(M,\F)^\mfa=\Omega^{\bullet,\bullet}(M,\F)$, see~\cite[Lemma~3.15]{GT2010}. Thus Corollary~\ref{cor:eqfor} follows from Theorem~\ref{thm:eqbasichodge}.
\end{proof}

As a corollary, we obtain a basic version of a result of Carrell and Lieberman~\cite[Theorem~1]{CarrellLieberman}. Note that a proof of the result of Carrell and Lieberman using equivariant Dolbeault cohomology is given in~\cite[Theorem~4.6]{CKP}. Our proof is same as the proof in~\cite{CKP} adapted to the basic situation. Let $C$ be the union of closed leaves of $\F$, and note that $C/\F$ naturally admits the structure of a K\"ahler orbifold.
\begin{thm}\label{thm:carlieb}
Let $\F$ be a transversely K\"ahler Killing foliation on a compact manifold $M$ such that the transverse action of the structural Killing algebra $\mfa$ is equivariantly formal. Then
$$
\sum_p h^{p,p+s}(M,\F)=\sum_p h^{p,p+s}(C,\F)=\sum_p h^{p,p+s}(C/\F)
$$
for all $s$. In particular, for $|s|>\dim_\CC C/\F$, we have $h^{p,p+s}(M,\F)=0$ and $H^{p,p+s}_\mfa(M,\F)=0$.
\end{thm}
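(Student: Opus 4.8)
The plan is to derive both the identities and the vanishing from two inputs already at hand: the Dolbeault equivariant formality of the transverse $\mfa$-action, which is Corollary~\ref{cor:eqfor} (obtained through Theorem~\ref{thm:eqbasichodge}), and a Borel-type localization theorem for equivariant basic Dolbeault cohomology at the union $C$ of closed leaves. The key structural observation is that in the bigraded Cartan complex the generators of $S^{\bullet}(\mfa^*)$ carry bidegree $(1,1)$, so the $S^{\bullet}(\mfa^*)$-module structure preserves $s:=q-p$; consequently $H^{\bullet,\bullet}_\mfa(M,\F)$ decomposes as a direct sum over $s\in\ZZ$ of graded $S^{\bullet}(\mfa^*)$-modules $\bigoplus_p H^{p,p+s}_\mfa(M,\F)$, and by the Dolbeault equivariant formality~\eqref{eq:cplexef} the $s$-summand is free of rank $\sum_p h^{p,p+s}(M,\F)$. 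Localizing $S^{\bullet}(\mfa^*)$ at its nonzero homogeneous elements — an operation compatible with the $s$-grading — turns this summand into a vector space of dimension $\sum_p h^{p,p+s}(M,\F)$ over the fraction field $Q$ of $S^{\bullet}(\mfa^*)$.

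On the side of $C$, I would first note that, $C$ being the common transverse zero locus of the transverse fields $\widetilde X$, $X\in\mfa$ (equivalently, the union of the leaf closures that happen to be single leaves), the restricted transverse $\mfa$-action on $(C,\F)$ is trivial: each $\widetilde X$ vanishes identically on $C$, so $\iota_{\widetilde X}$ and $L_{\widetilde X}$ act as zero, whence $\Omega^{\bullet,\bullet}_\mfa(C,\F)=S^{\bullet}(\mfa^*)\otimes\Omega^{\bullet,\bullet}(C,\F)$ with $\overline{\partial}_\mfa=\overline{\partial}_b$. Thus the $s$-summand of $H^{\bullet,\bullet}_\mfa(C,\F)$ is free of rank $\sum_p h^{p,p+s}(C,\F)$. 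I would then apply the localization theorem: the restriction map $H^{\bullet,\bullet}_\mfa(M,\F)\to H^{\bullet,\bullet}_\mfa(C,\F)$ becomes an isomorphism after inverting the nonzero homogeneous elements of $S^{\bullet}(\mfa^*)$; since restriction of basic forms preserves bidegree (the inclusion of the fixed locus being transversely holomorphic), this localized isomorphism respects the $s$-grading, and comparing $Q$-dimensions degree by degree gives $\sum_p h^{p,p+s}(M,\F)=\sum_p h^{p,p+s}(C,\F)$. Since every leaf of $\F|_C$ is closed, $\Omega^{\bullet,\bullet}(C,\F)$ is the orbifold Dolbeault complex of the Kähler orbifold $C/\F$, so $h^{p,q}(C,\F)=h^{p,q}(C/\F)$, completing the chain of equalities. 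The vanishing statement then follows formally: $h^{p,q}(C/\F)=0$ unless $0\le p,q\le\dim_\CC C/\F$, so $\sum_p h^{p,p+s}(C/\F)=0$ once $|s|>\dim_\CC C/\F$; nonnegativity of the Hodge numbers forces $h^{p,p+s}(M,\F)=0$, and then~\eqref{eq:cplexef} forces $H^{p,p+s}_\mfa(M,\F)=0$ as well.

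The hard part will be supplying the localization theorem for equivariant basic Dolbeault cohomology, since this is a new invariant while the Borel localization theorem of~\cite{GT2010} is formulated for the total-degree equivariant basic cohomology of Killing foliations. I see two routes. One is to run the proof of~\cite{GT2010} directly in the bigraded setting: a Mayer--Vietoris induction over $\mfa$-saturated open sets, with acyclicity of the localized equivariant basic Dolbeault complex of $M\setminus C$ produced by a contracting homotopy built from $\iota_{\widetilde X}$ for a transversely nowhere-vanishing $X\in\mfa$, after splitting $\widetilde X$ into its $(1,0)$- and $(0,1)$-parts as in the proof of Proposition~\ref{prop:eqdol}; this is exactly the argument of Carrell, Kaveh and Puppe~\cite{CKP} transported to the basic situation. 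The other is to observe that the restriction map is a morphism of equivariant Frölicher spectral sequences, both of which collapse at $E_1$ (for $M$ by Theorem~\ref{thm:eqbasichodge}, for $C$ because $C/\F$ is Kähler), and to bootstrap the bigraded statement from the total-degree localization of~\cite{GT2010}. A further technical point to handle is that $C$ need not be connected and its components may have different dimensions; one uses that $C$ is a disjoint union of closed $\F$-saturated submanifolds, each transversely Kähler with Kähler orbifold leaf space, and applies everything componentwise.
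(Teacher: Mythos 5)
Your proposal is correct and follows essentially the same route as the paper: Dolbeault equivariant formality (Corollary~\ref{cor:eqfor}) makes each diagonal $\bigoplus_p H^{p,p+s}_\mfa(M,\F)$ a free $S^{\bullet}(\mfa^*)$-module of rank $\sum_p h^{p,p+s}(M,\F)$, the restriction to $C$ (where the transverse $\mfa$-action is trivial) has torsion kernel and cokernel by the Borel localization of~\cite[Theorem~5.2]{GT2010} transferred through the equivariant basic Hodge decomposition~\eqref{eq:eqHodge}, and comparing ranks diagonal by diagonal gives the identities and the vanishing. For the input you flag as the hard part, the paper takes exactly your second, ``bootstrap'' route --- invoking the total-degree localization together with the collapse of the equivariant Fr\"olicher spectral sequences --- rather than redoing a bigraded Mayer--Vietoris argument.
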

\begin{proof}
Because the $\mfa$-action is Dolbeault equivariantly formal by Corollary~\ref{cor:eqfor}, the natural inclusion $C\to M$ induces an injective map
$$
H^{\bullet,\bullet}_\mfa(M,\F) \to H^{\bullet,\bullet}_\mfa(C,\F)\;.
$$
Its cokernel is torsion due to~\cite[Theorem~5.2]{GT2010} and the equivariant basic Hodge decomposition. The diagonals $\bigoplus_p H^{p,p+s}$ of the left and right hand side are $S^{\bullet}(\mfa^*)$-submodules and the cokernels of the map restricted to the diagonals are again torsion, hence the rank of the corresponding diagonals as $S^{\bullet}(\mfa^*)$-modules are equal. By Equation~\eqref{eq:cplexef} these are $\sum_p h^{p,p+s}(M,\F)$ and $\sum_p h^{p,p+s}(C,\F)$, respectively. $h^{p,p+s}(C,\F)=h^{p,p+s}(C/\F)$ is a direct consequence of the definition of basic Dolbeault cohomology.

If $|s|>\dim_\CC C/\F$, then $h^{p,p+s}(C,\F)=0$ for all $p$. Thus, $h^{p,p+s}(M,\F)=0$. By~\eqref{eq:cplexef}, we have also $H^{p,p+s}_\mfa(M,\F)=0$ for all $p$.
\end{proof}

\section{A vanishing theorem for Sasakian manifolds}\label{sec:vanSas}
On a compact Sasakian manifold $M$, the characteristic foliation $\F$ is transversely K\"ahler and admits a natural transverse action of an Abelian Lie algebra $\mfa$ as mentioned in Section~\ref{sec:tractiononsas}. $\F$ is Killing and $\mfa$ is regarded as the structural Killing algebra of $\F$ as mentioned in Section~\ref{sec:tractiononKilling}. Since this $\mfa$-action is always equivariantly formal by~\cite[Theorem~6.8]{GNT}, Theorems~\ref{thm:eqbasichodge} and~\ref{thm:carlieb} apply. For example:
\begin{thm}\label{thm:vanishing}
If the characteristic foliation $\F$ of a Sasakian structure on a compact manifold $M$ has only finitely many closed leaves, then $h^{p,q}(M,\F)=0$ for $p\neq q$.
\end{thm}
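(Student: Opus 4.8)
The plan is to obtain Theorem~\ref{thm:vanishing} as a direct consequence of the Carrell--Lieberman-type statement Theorem~\ref{thm:carlieb}. First I would check that its hypotheses are satisfied for the characteristic foliation $\F$ of a compact Sasakian manifold $(M,\eta,g)$: $\F$ is transversely K\"ahler by Lemma~\ref{lem:ReebtrKaehler}; it is a Killing foliation by~\cite{MolSer}, with structural Killing algebra the Abelian Lie algebra $\mfa=\mft/\RR\xi$ of Section~\ref{sec:tractiononsas}; and the transverse $\mfa$-action is equivariantly formal by~\cite[Theorem~6.8]{GNT}. Hence Theorem~\ref{thm:carlieb} applies and yields, for every $s$,
\[
\sum_p h^{p,p+s}(M,\F)=\sum_p h^{p,p+s}(C/\F)\;,
\]
where $C$ is the union of the closed leaves of $\F$.

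The next step is to identify $C/\F$ under the hypothesis that $\F$ has only finitely many closed leaves. Since the Reeb vector field $\xi$ is nowhere vanishing, every leaf of $\F$ is one-dimensional, so a closed leaf is precisely a periodic orbit of the Reeb flow, i.e., an embedded circle. Therefore $C$ is a finite disjoint union of circles, each of which is a single leaf, so the leaf space $C/\F$ is a finite set of points, a $0$-dimensional K\"ahler orbifold; in particular $\dim_\CC C/\F=0$.

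It then only remains to feed this back into the displayed equality. For a $0$-dimensional complex orbifold one has $h^{p,p+s}(C/\F)=0$ unless $p=0$ and $s=0$, so $\sum_p h^{p,p+s}(C/\F)=0$ whenever $s\neq 0$. Consequently $\sum_p h^{p,p+s}(M,\F)=0$ for every $s\neq 0$, and since all the basic Hodge numbers are nonnegative, $h^{p,q}(M,\F)=0$ whenever $q-p=s\neq 0$, that is, for all $p\neq q$. I do not expect a genuine obstacle here; the only point needing a moment's care is the verification that $C/\F$ is $0$-dimensional, which rests on $\xi$ being nowhere zero so that closed leaves are circles rather than higher-dimensional leaf closures, together with the routine confirmation that the hypotheses of Theorem~\ref{thm:carlieb} hold in the Sasakian setting.
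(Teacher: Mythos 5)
Your proposal is correct and is essentially the paper's own argument: the paper derives Theorem~\ref{thm:vanishing} directly from Theorem~\ref{thm:carlieb}, using that the $\mfa$-action is equivariantly formal by~\cite[Theorem~6.8]{GNT} and that finitely many closed leaves forces $\dim_\CC C/\F=0$. Your verification of the hypotheses and the identification of $C/\F$ as a finite point set match the intended reasoning exactly.
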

As a class of examples where this theorem applies, recall the following definition:
\begin{defn}\label{defn:toriccontact}
A $(2n+1)$-dimensional contact manifold with a $T^{n+1}$-action preserving the contact structure is called a {\em contact toric manifold}. Moreover, if the Reeb vector field of a contact form generates an $\RR$-subaction of the $T^{n+1}$-action, then the contact $T^{n+1}$-manifold is called a {\em contact toric manifold of Reeb type}.
 \end{defn}

\begin{thm}[\cite{Boyer Galicki 2}]\label{thm:toriccontact}
A contact toric manifold of Reeb type admits a Sasakian structure.
\end{thm}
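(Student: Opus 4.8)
The plan is to pass from $M$ to its symplectic cone, recognize that cone as a smooth toric symplectic cone, equip it with a compatible toric K\"ahler cone metric, and then read off a Sasakian structure on $M$ as its link. As a first step I would normalize the contact form: averaging the given Reeb-type contact form over $T^{n+1}$ produces a $T^{n+1}$-invariant contact form $\eta$ with $\ker\eta=\mathcal D$. Because the fundamental vector fields of the \emph{abelian} group $T^{n+1}$ are invariant under the group, this averaging does not change the pairing with the distinguished generator $\xi\in\mathfrak t^{n+1}$, so $\eta(\xi)\equiv 1$; together with $T^{n+1}$-invariance this gives $\iota_\xi d\eta=0$, hence $\xi$ is the Reeb field of $\eta$ and lies in $\mathfrak t^{n+1}$.

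Next I would analyze the symplectic cone $C(M)=M\times\RR_{>0}$ equipped with $\omega=d(r^{2}\eta)$ and the Euler field $\psi=r\partial_r$, which satisfy $\mathcal L_\psi\omega=2\omega$. The $T^{n+1}$-action extends to $C(M)$, commutes with $\psi$, is Hamiltonian, and has a moment map $\mu$ homogeneous of degree $2$ in $r$; the image $\mu(C(M))\cup\{0\}$ is a convex rational polyhedral cone $\mathcal C\subset(\mathfrak t^{n+1})^{*}$. The Reeb type hypothesis is precisely the statement that $\xi$ lies in the interior of the dual cone $\mathcal C^{\vee}$, which exhibits $C(M)$ as a symplectic toric cone in Lerman's sense; since $M$, hence $C(M)$, is a \emph{manifold}, $\mathcal C$ is moreover a \emph{good} cone, and Lerman's classification identifies $(C(M),\omega,\psi,T^{n+1})$ equivariantly with the canonical smooth symplectic toric cone attached to $\mathcal C$.

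For the third step I would put a compatible K\"ahler cone structure on $C(M)$. On a good cone the Delzant--Guillemin construction furnishes a $T^{n+1}$-invariant complex structure $J$ compatible with $\omega$: it is built over the principal orbit from the canonical symplectic potential associated to the facet normals of $\mathcal C$, extends smoothly across all non-principal orbits exactly because $\mathcal C$ is good, and is homogeneous of the right degree, $\mathcal L_\psi J=0$. Then $\bar g:=\omega(\cdot,J\cdot)$ is a K\"ahler metric of cone type, i.e.\ $\bar g=dr\otimes dr+r^{2}g$ for a Riemannian metric $g$ on $M$, with K\"ahler form $d(r^{2}\eta)$. By the definition of Sasakian manifold recalled in Section~\ref{sec:Sasakian}, $(M,\eta,g)$ is then Sasakian, and by construction $T^{n+1}$ acts on it by CR automorphisms.

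The main obstacle is the third step together with the ``good cone'' half of the second: one must produce on $C(M)$ a compatible complex structure that is \emph{simultaneously} $T^{n+1}$-invariant, integrable across the non-principal orbits, and $r\partial_r$-homogeneous. A structure obtained by naive averaging of an almost CR structure would in general fail the last two conditions, and it is exactly the toric good-cone structure --- forced on us by the Reeb type and manifold hypotheses --- that makes the explicit potential construction available and smooth. A more elementary but less self-contained alternative handles the quasi-regular case directly: choosing $\xi$ to lie on a rational ray in the interior of $\mathcal C^{\vee}$, the quotient $M/S^{1}$ is a compact symplectic toric orbifold, K\"ahler by the Lerman--Tolman orbifold Delzant theorem, and $M$ is recovered as the associated Boothby--Wang orbibundle, which carries a Sasakian structure; since the theorem only asserts the existence of \emph{some} Sasakian structure on $M$, no further passage to arbitrary Reeb fields is required.
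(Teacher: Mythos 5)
The paper offers no proof of this statement: it is quoted from~\cite[Theorem~5.2]{Boyer Galicki 2}, so there is no internal argument to compare against. Your write-up is nonetheless a correct reconstruction, and in fact contains two valid proofs. The ``elementary alternative'' in your last paragraph --- average $\eta$ over $T^{n+1}$ so that the Reeb field lies in $\mathfrak{t}^{n+1}$, tilt it to a rational ray in the interior of the dual moment cone to make the structure quasi-regular, identify $M/S^{1}$ as a compact symplectic toric orbifold, equip it with a compatible invariant K\"ahler structure via the Lerman--Tolman orbifold Delzant theorem, and lift through the orbifold Boothby--Wang construction --- is essentially the argument of the cited reference, and it does suffice since only the existence of \emph{some} Sasakian structure is claimed. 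Your primary route through toric K\"ahler cone metrics on the symplectization is also sound but historically later and heavier: it requires Lerman's classification (to know the moment cone is good; note that in dimension $3$ this step needs separate treatment) and the extension of Guillemin's canonical symplectic potential to good cones in the style of Abreu and Martelli--Sparks--Yau, including the check that the resulting $J$ is $r\partial_r$-homogeneous and satisfies $J(r\partial_r)=\xi$ so that the link inherits the prescribed contact form and Reeb field. What that extra machinery buys is a genuinely stronger conclusion --- a compatible Sasakian structure for \emph{every} Reeb field in the interior of the dual cone, i.e.\ the whole Sasaki cone --- which is more than the theorem asserts but is exactly what Section~\ref{sec:G/P} of the paper exploits in the homogeneous case. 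Neither route has a gap beyond its reliance on these standard external inputs.
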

Thus as a corollary of Theorem~\ref{thm:vanishing}, we get
\begin{cor}\label{cor:vantor}
For an invariant Sasakian structure on a contact toric manifold $M$ of Reeb type with characteristic foliation $\F$, we have $h^{p,q}(M,\F)=0$ for $p\neq q$.
\end{cor}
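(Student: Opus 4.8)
The plan is to deduce the corollary from the rigidity theorem (Theorem~\ref{thm:invtypeI}) together with the vanishing theorem (Theorem~\ref{thm:vanishing}). The subtlety is that the \emph{given} invariant Sasakian structure need not have finitely many closed leaves (for instance it may be quasi-regular, in which case $C=M$), so Theorem~\ref{thm:vanishing} does not apply to it directly. Instead I would produce another Sasakian structure with the \emph{same} underlying CR structure whose characteristic foliation has only finitely many closed leaves, and then transport the vanishing back via Theorem~\ref{thm:invtypeI}.

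First, since the structure is $T^{n+1}$-invariant, $T^{n+1}$ preserves $\eta$ and $g$, hence also $\cD=\ker\eta$ and the transverse complex structure $J$; thus $T^{n+1}\subseteq\CR(\cD,J)$ and the Lie algebra $\mathfrak{t}$ of $T^{n+1}$ lies in $\mathfrak{cr}(\cD,J)$. Set $\mathfrak{t}^{+}=\{\zeta\in\mathfrak{t}\mid \eta(\zeta)>0 \text{ on } M\}$; this is a nonempty (it contains the Reeb field $\xi$) open convex cone, and for $\zeta\in\mathfrak{t}^{+}$ the vector field $\zeta_{M}$ on $M$ generated by $\zeta$ is nowhere vanishing, so $\zeta\in\mathfrak{cr}^{+}(\cD,J)$. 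By the bijection $\mathcal{S}(\cD,J)\cong\mathfrak{cr}^{+}(\cD,J)$ recalled in Section~\ref{sec:cone} (\cite[Lemma~6.4]{BGS}), each such $\zeta$ is the Reeb field of a Sasakian structure in $\mathcal{S}(\cD,J)$ with some characteristic foliation $\F_{\zeta}$, and by Theorem~\ref{thm:invtypeI} one has $h^{p,q}(M,\F_{\zeta})=h^{p,q}(M,\F)$ for all $p,q$. It therefore suffices to find $\zeta\in\mathfrak{t}^{+}$ for which $\F_{\zeta}$ has only finitely many closed leaves, and then to apply Theorem~\ref{thm:vanishing}.

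To produce such a $\zeta$, I would use that $M$ is compact, so the $T^{n+1}$-action has only finitely many orbit-type strata $M_{1},\dots,M_{N}$, with isotropy subalgebras $\mathfrak{h}_{1},\dots,\mathfrak{h}_{N}\subsetneq\mathfrak{t}$ (proper, since a Reeb-type element is nowhere zero, so there are no $T^{n+1}$-fixed points); write $d_{i}=\dim(\mathfrak{t}/\mathfrak{h}_{i})\ge1$. Every orbit of the flow of $\zeta_{M}$ lies inside a single $M_{i}$, and it is closed precisely when the image of $\zeta$ in the quotient torus $T^{n+1}/\exp\mathfrak{h}_{i}$ generates a circle subgroup, a rationality condition on $\zeta$ modulo $\mathfrak{h}_{i}$. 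For $d_{i}\ge2$ this confines $\zeta$ to a countable union of proper subspaces of $\mathfrak{t}$, a set of measure zero; for $d_{i}=1$ every $\zeta\notin\mathfrak{h}_{i}$ produces closed orbits throughout $M_{i}$, but here the hypothesis that $M$ is toric of \emph{Reeb type} enters decisively: by Lerman's description of such manifolds (see \cite{Boyer Galicki 2}), the strata with $d_{i}=1$ are exactly the one-dimensional $T^{n+1}$-orbits lying over the vertices of the characteristic polytope (equivalently, over the edges of the good moment cone), so each is a single circle and there are finitely many of them. Hence, choosing $\zeta$ in the nonempty open cone $\mathfrak{t}^{+}$ and outside the above measure-zero set, every closed orbit of $\zeta_{M}$ is one of these finitely many circles, so $\F_{\zeta}$ has finitely many closed leaves. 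Theorem~\ref{thm:vanishing} then gives $h^{p,q}(M,\F_{\zeta})=0$ for $p\neq q$, and Theorem~\ref{thm:invtypeI} yields $h^{p,q}(M,\F)=0$ for $p\neq q$.

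The step I expect to be the main obstacle is the geometric input in the last paragraph: excluding positive-dimensional families of closed $\zeta$-orbits. For a general $K$-contact manifold the union of closed leaves can be higher-dimensional, so one genuinely needs the fine orbit structure of toric contact manifolds of Reeb type --- concretely, that the associated moment cone is a \emph{good} rational polyhedral cone whose face lattice matches the orbit-type stratification, which forces the minimal one-dimensional orbits to be isolated circles indexed by the edges of the cone.
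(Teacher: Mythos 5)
Your proposal is correct, and it is essentially the argument the paper intends: the paper states Corollary~\ref{cor:vantor} as an immediate consequence of Theorem~\ref{thm:vanishing} without written proof, but since the given invariant structure may well be quasi-regular (so that $C=M$ and Theorem~\ref{thm:vanishing} does not apply to it directly), the missing step is exactly the one you supply --- pass to a generic Reeb field in $\mathfrak{t}\cap\mathfrak{cr}^{+}(\cD,J)$, whose flow has only the finitely many one-dimensional $T^{n+1}$-orbits as closed orbits, and transport the vanishing back via Theorem~\ref{thm:invtypeI}. This is the same mechanism the paper itself spells out in the proofs of Theorem~\ref{thm:rigidityandvanishing} and Theorem~\ref{thm:Madmits} (generic element of a torus in the CR automorphism group, then rigidity), and your identification of the key geometric input --- that for a good moment cone the minimal strata are finitely many isolated circles over the edges --- is the correct place where the Reeb-type/toric hypothesis enters.
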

Finally, we mention that Theorems~\ref{thm:invtypeI} and~\ref{thm:vanishing} can be combined to the following statement; recall that a CR vector field by definition is a vector field whose flow preserves the CR structure.

\begin{thm}\label{thm:rigidityandvanishing}
Let $(M,\eta,g)$ be a compact Sasakian manifold with characteristic foliation $\F$. If there exists a nowhere vanishing CR vector field $X$ on $M$ with only finitely many closed orbits, then $h^{p,q}(M,\F)=0$ for $p\neq q$.
\end{thm}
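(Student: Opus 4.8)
The plan is to deform the Sasakian structure inside the space $\mathcal{S}(\cD,J)$ of Sasakian structures with the fixed underlying CR structure $(\cD,J)$ until the characteristic foliation has only finitely many closed leaves, and then to combine the vanishing Theorem~\ref{thm:vanishing} with the rigidity Theorem~\ref{thm:invtypeI}. If $M$ is a real cohomology sphere, then Proposition~\ref{prop:sp} already gives $h^{p,q}(M,\F)=0$ for $p\neq q$, so I may assume that $M$ is not CR diffeomorphic to the standard CR sphere; then $G:=\CR_0(\cD,J)$ is compact by Schoen's theorem~\cite{Schoen}. Averaging $\eta$ over $G$ as in the proof of Lemma~\ref{lem:cone} produces a Sasakian structure in $\mathcal{S}(\cD,J)$ whose Reeb field $\xi_0$ lies in $\Lcr^+(\cD,J)$ and is central in $\Lcr(\cD,J)$; only these two properties of $\xi_0$ will be used.

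First I would describe the closed orbits of $X$ in terms of a torus action. Since $X\in\Lcr(\cD,J)=\Lie(G)$ and $G$ is compact, the closure $T_X$ of the one-parameter subgroup generated by $X$ is a torus in $G$ acting on $M$ by CR automorphisms. As $X$ is nowhere zero, $T_X$ has no fixed points, so every $T_X$-orbit has positive dimension; moreover $X$ is tangent to, and nowhere zero on, each one-dimensional $T_X$-orbit, so such an orbit is swept out as a single closed orbit of the flow of $X$. Since the closure of the $X$-orbit through any point $x$ equals $T_X\cdot x$, the one-dimensional $T_X$-orbits are exactly the closed orbits of $X$, and hence there are only finitely many of them.

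Next, because $\xi_0$ is central in $\Lcr(\cD,J)$, the group $T:=\overline{\exp(\RR\xi_0)\,T_X}\subset G$ is a torus, and $\xi_0,X\in\mft:=\Lie(T)$ with $T_X\subseteq T$. The key observation is that every one-dimensional $T$-orbit is already a one-dimensional $T_X$-orbit: if $O=T\cdot x$ is one-dimensional, then $T_X\cdot x\subseteq O$ is a compact $T_X$-orbit of positive dimension, hence a $1$-dimensional submanifold of the connected $1$-manifold $O$, so it is open and closed in $O$ and therefore equal to $O$. Consequently $T$ has only finitely many one-dimensional orbits. Now $\Lcr^+(\cD,J)\cap\mft$ is a nonempty (it contains $\xi_0$) open convex cone in $\mft$, and the directions in $\mft$ whose flow is not dense in $T$ form a countable union of proper linear subspaces, so I may choose $\zeta\in\Lcr^+(\cD,J)\cap\mft$ winding densely in $T$. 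By the correspondence $\mathcal{S}(\cD,J)\cong\Lcr^+(\cD,J)$ of~\cite{BGS} recalled in Section~\ref{sec:cone}, $\zeta$ is the Reeb field of a Sasakian structure in $\mathcal{S}(\cD,J)$ whose characteristic foliation $\F'$ is the orbit foliation of $\zeta$. Since $\zeta$ is transverse to $\cD$ it is nowhere zero, and since it winds densely in $T$ its orbits are closed exactly over the one-dimensional $T$-orbits; thus $\F'$ has only finitely many closed leaves. Theorem~\ref{thm:vanishing} then gives $h^{p,q}(M,\F')=0$ for $p\neq q$, and since $(\eta,g)$ and this new Sasakian structure share the underlying CR structure $(\cD,J)$, Theorem~\ref{thm:invtypeI} yields $h^{p,q}(M,\F)=h^{p,q}(M,\F')=0$ for $p\neq q$.

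The routine parts are the reduction to the non-sphere case, the averaging producing a central Reeb field, and the Baire-category selection of a densely winding direction inside the Reeb cone. The main obstacle is the torus comparison in the middle step: translating ``$X$ has finitely many closed orbits'' into ``a suitable Reeb foliation has finitely many closed leaves'' requires packaging $X$ into a torus $T$ that simultaneously (i) still has only finitely many one-dimensional orbits and (ii) meets the Reeb cone $\Lcr^+(\cD,J)$. Centrality of the averaged Reeb field $\xi_0$ is precisely what makes $T=\overline{\exp(\RR\xi_0)T_X}$ abelian and guarantees (ii), while the ``orbit inside an orbit'' argument gives (i).
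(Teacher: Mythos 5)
Your proposal is correct and follows essentially the same route as the paper's proof: reduce to the non-sphere case via Proposition~\ref{prop:sp}, average to obtain a central Reeb field $\xi_0$, form the torus generated by $X$ and $\xi_0$, observe that it has only finitely many one-dimensional orbits, pick a generic Reeb field in its Lie algebra inside the cone $\mathfrak{cr}^+(\mathcal D,J)$, and conclude with Theorems~\ref{thm:vanishing} and~\ref{thm:invtypeI}. Your ``orbit inside an orbit'' argument just makes explicit a step the paper leaves implicit (that every one-dimensional orbit of the larger torus is already a closed $X$-orbit), so no substantive difference.
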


\begin{proof}
By Proposition~\ref{prop:sp}, it suffices to show the case where $M$ is not diffeomorphic to $S^{2n+1}$. Then the CR diffeomorphism group $\CR(\cD,J)$ is compact by a theorem of Schoen~\cite{Schoen}. By Lemma~\ref{lem:cone}, $(\eta,g)$ can be obtained from a $\CR(\cD,J)$-invariant Sasakian structure $(\eta_{0},g_{0})$ under a deformation of type I. Let $\xi_{0}$ be the Reeb vector field of $(\eta_{0},g_{0})$ and $\F_{0}$ the characteristic foliation of $(\eta_{0},g_{0})$. Here the flow generated by $\xi_{0}$ commutes with the $\CR(\cD,J)$-action. In particular, the flow generated by $\xi_{0}$ maps each closed $X$-orbit to another closed $X$-orbit. Then, by the finiteness of the closed $X$-orbits, each closed $X$-orbit is preserved by the flow generated by $\xi_{0}$. Hence the torus subgroup $S$ generated by $X$ and $\xi_{0}$ has only finitely many $1$-dimensional orbits. Then, by a deformation of type I using a generic infinitesimal generator of the $S$-action, we get a new Sasakian structure $(\eta_{1},g_{1})$ such that the closed leaves of the characteristic foliation $\F_{1}$ are equal to the $1$-dimensional orbits of the $S$-action. By Theorem~\ref{thm:vanishing}, we get $h^{p,q}(M,\F_{1})=0$ for $p\neq q$. Theorem~\ref{thm:defI} implies $h^{p,q}(M,\F)=h^{p,q}(M,\F_{0})=h^{p,q}(M,\F_{1})$, which concludes the proof.
\end{proof}

\begin{ex}
Let us consider the well-understood case of a three-dimensional Sasakian manifold $(M,\eta,g)$~\cite{Geiges,Belgun}, see also~\cite[Section~7]{BGM}. If $M$ is null (resp., negative), then it is, up to covering, a circle bundle over a complex torus (resp., a Riemann surface of genus at least $2$). In both cases, the only possible deformations of type I are rescalings of the Reeb vector field. In particular, we cannot find a Sasakian structure with the same underlying CR structure whose characteristic foliation has only finitely many closed leaves. This fact is reflected in cohomology, as in both cases the off-diagonals in basic Dolbeault cohomology do not vanish. Note that any irregular Sasakian $3$-manifold is toric. By~\cite[Theorem~2.18]{Lerman}, such manifolds are lens spaces.
\end{ex}

\begin{ex} More generally, consider a regular Sasakian structure on the total space of a circle bundle over a K\"ahler manifold $M$ obtained by the Boothby-Wang construction~\cite{Boothby Wang}. In case the off-diagonals in the Dolbeault cohomology of $M$ do not vanish, the Sasakian structure does not admit deformations of type I such that the characteristic foliation of the deformed Sasakian structure has only finitely many closed leaves.
\end{ex}

\section{Deformations of homogeneous Sasakian manifolds}\label{sec:G/P}

As an application of the results in this paper, we calculate the basic Hodge numbers of Sasakian structures whose characteristic foliation has finitely many closed leaves constructed by deforming homogeneous Sasakian manifolds. It is well-known that any compact homogeneous Sasakian manifold $(M,\eta,g)$ is a nontrivial circle bundle over a generalized flag manifold, see~\cite[Theorem~8.3.6]{Boyer Galicki}. Denote the underlying CR structure of $(M,\eta,g)$ by $(\cD,J)$, and the Reeb vector field by $\xi$. We fix a compact Lie group $G\subset \AUT(\eta,g)$ that contains the Reeb flow of $\eta$ as a one-parameter subgroup and acts transitively on $M$. Then we can write this circle bundle as $M=G/K\to G/H$, where $H$ is the centralizer of a torus in $G$ (in particular, $\rk G = \rk H$), and $G/H$ is a homogeneous K\"ahler manifold. Note that $\xi$ is contained in the center of $\mfg$ and acts trivially on $G/H$, i.e., is also contained in $\mfh$.

\begin{thm}\label{thm:Madmits}
$M$ admits an irregular Sasakian structure such that the characteristic foliation $\F$ has a finite number of closed leaves and
\begin{equation}\label{eq:homhpq}
h^{p,q}(M,\F) = h^{p,q}(G/H)=
\begin{cases}
b^{2k}(G/H) & \text{if}\,\,\,\, p=q=k \;, \\
0 & \text{if}\,\,\,\, p \neq q\;.
\end{cases}
\end{equation}
The number of closed leaves of $\F$ is $\chi(G/H)$, the Euler number of $G/H$.
\end{thm}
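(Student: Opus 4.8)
The plan is to construct the desired irregular Sasakian structure by a deformation of type~I starting from the homogeneous one, and then to compute the basic Hodge numbers by combining the rigidity theorem (Theorem~\ref{thm:defI}) with the vanishing theorem (Theorem~\ref{thm:vanishing}). The starting point is the observation that $M = G/K \to G/H$ with $G \subset \AUT(\eta,g)$ acting transitively, so $\AUT(\eta,g)$ contains a maximal torus $T$ of $G$; since $\rk G = \rk H$, this $T$ is also a maximal torus of $H$. The Reeb vector field $\xi$ lies in the center of $\mfg$, hence in $\mft$, so the Reeb flow is a one-parameter subgroup of the $T$-action on $M$.

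First I would analyze the $T$-action on $M$. The $1$-dimensional $T$-orbits on $M = G/K$ correspond (via the bundle projection to $G/H$) to the fixed points of the $T$-action on the flag manifold $G/H$ together with the fibre direction; since $T$ is a maximal torus of $H$ and $G/H$ is a generalized flag manifold, the $T$-fixed point set $(G/H)^T$ is finite, of cardinality equal to $|W_G/W_H| = \chi(G/H)$ (the Weyl group quotient), by the classical Bruhat decomposition. Consequently the union of $1$-dimensional $T$-orbits in $M$ is a finite disjoint union of circles, one over each $T$-fixed point of $G/H$, numbering $\chi(G/H)$. Next, choosing a \emph{generic} element $\zeta$ of $\mft$ with $\eta(\zeta) > 0$ — which exists because $\mathfrak{cr}^+(\cD,J)$ is an open cone and $\xi$ lies in it — and letting $\zeta$ be the new Reeb vector field, the corresponding Sasakian structure $(\eta_1, g_1)$ lies in $\mathcal{S}(\cD,J)$ and is obtained from $(\eta,g)$ by a deformation of type~I (one connects $\xi$ to $\zeta$ by a line segment inside $\mathfrak{cr}^+(\cD,J)$). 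For generic $\zeta$ the closure of the new Reeb flow is all of $T$, so the closed leaves of the new characteristic foliation $\F$ are exactly the $1$-dimensional $T$-orbits, i.e.\ $\chi(G/H)$ circles. In particular $\F$ is irregular (as $\dim T > 1$, since $G/H$ is a nontrivial flag manifold) with finitely many closed leaves.

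With this structure in hand, Theorem~\ref{thm:vanishing} immediately gives $h^{p,q}(M,\F) = 0$ for $p \neq q$. For the diagonal numbers, Theorem~\ref{thm:defI} says the basic Hodge numbers are invariant under the deformation of type~I, so $h^{p,q}(M,\F) = h^{p,q}(M,\F_{\mathrm{hom}})$ where $\F_{\mathrm{hom}}$ is the characteristic foliation of the original homogeneous structure. Since the homogeneous structure is regular, its basic Dolbeault cohomology is the Dolbeault cohomology of the leaf space, which is precisely the flag manifold $G/H$: thus $h^{p,q}(M,\F) = h^{p,q}(G/H)$. Finally, a generalized flag manifold $G/H$ has no odd cohomology and its Hodge structure is of Tate type (all cohomology is of type $(k,k)$ in degree $2k$) — this is standard, following from the Bruhat cell decomposition into even-dimensional complex cells — so $h^{k,k}(G/H) = b^{2k}(G/H)$ and $h^{p,q}(G/H) = 0$ for $p \neq q$, and $\sum_k b^{2k}(G/H) = \chi(G/H)$ as already noted. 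This yields formula~\eqref{eq:homhpq} and the count of closed leaves.

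The main obstacle I anticipate is verifying carefully that the closed leaves of the generic deformed characteristic foliation are \emph{exactly} the $1$-dimensional $T$-orbits, and that there are precisely $\chi(G/H)$ of them: this requires knowing that for generic $\zeta \in \mft$ the orbit closures of the flow of $\zeta$ coincide with the $T$-orbits (a genericity/irrationality argument on the weights), together with the identification of the $1$-dimensional $T$-orbits in the circle bundle $M \to G/H$ with the fibres over $(G/H)^T$, and the classical fact $|(G/H)^T| = \chi(G/H)$. The rigidity and vanishing inputs are then applied essentially as black boxes; the Hodge-theoretic properties of $G/H$ are classical. One should also double-check that the bundle $M \to G/H$ being nontrivial (Euler class the Kähler class) does not introduce extra $1$-dimensional orbits over non-fixed points of $G/H$ — it does not, since over a point of $G/H$ with a $2$-dimensional $T$-orbit the preimage in $M$ carries a $T$-orbit of dimension at least $2$.
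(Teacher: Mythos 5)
Your proposal is correct and follows essentially the same route as the paper: deform the homogeneous structure by a type-I deformation to a generic element of the Lie algebra of a maximal torus $T\subset H\subset G$, identify the closed leaves of the new characteristic foliation with the fibres over the finitely many $T$-fixed points of $G/H$, and then combine Theorem~\ref{thm:vanishing} with Theorem~\ref{thm:defI}. The only (harmless) divergences are that you count the closed leaves directly via $|(G/H)^T|=|W(G)/W(H)|=\chi(G/H)$, whereas the paper invokes the localization result \cite[Theorem~7.11]{GNT} to equate the number of closed leaves with $\sum_k b^k(G/H)=\chi(G/H)$, and that you quote the classical Tate-type Hodge structure of $G/H$ to get $h^{k,k}(G/H)=b^{2k}(G/H)$, whereas the paper deliberately deduces the off-diagonal vanishing for $G/H$ from its own Theorems~\ref{thm:invtypeI} and~\ref{thm:vanishing} rather than from Borel--Hirzebruch.
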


\begin{rem}
Originally, $h^{p,q}(G/H)=0$ for $p\neq q$, the second equality in~\eqref{eq:homhpq}, was shown by Borel-Hirzebruch~\cite[Proposition in~14.10]{BH0}. It also follows from the original vanishing theorem of Carrell and Lieberman~\cite{CarrellLieberman}. Here we will deduce it from Theorems~\ref{thm:invtypeI} and \ref{thm:vanishing}.
\end{rem}

\begin{proof}[Proof of Theorem~\ref{thm:Madmits}]
Each element in the open convex cone $\mathfrak{cr}^+(\cD,J)$ (see Section~\ref{sec:invarianceofhpq}) is the Reeb vector field of another Sasakian structure on $M$ with CR structure $(\cD,J)$. Because $\xi\in \mfh\cap \mathfrak{cr}^+(\cD,J)$, a small neighborhood of $\xi$ in $\mfg$ is contained in $\mathfrak{cr}^+(\cD,J)$. The closed leaves of the characteristic foliation of the Sasakian structure corresponding to an element in $\mfg\cap \mathfrak{cr}^+(\cD,J)$ are exactly the preimages under the projection $G/K\to G/H$ of the fixed points of the flow on $G/H$ generated by this element. We therefore need to find an element in $\mfg\cap \mathfrak{cr}^+(\cD,J)$ whose flow has only finitely many fixed points on $G/H$.

Let $T$ be a maximal torus in $H$ (which then is also a maximal torus in $G$). The fixed point set of the action of $T$ by left multiplication on $G/H$ is exactly $N_G(T)/N_H(T)$, which (because $\rk G = \rk H$) is equal to the quotient of Weyl groups $W(G)/W(H)$, so in particular finite. This means that the flow of a generic element in $\mft$ close to $\xi$ has only finitely many fixed points on $G/H$.

Theorem~\ref{thm:vanishing} implies that the associated Sasakian structure satisfies $h^{p,q}(M,\F)=0$ for $p\neq q$. By Theorem~\ref{thm:invtypeI}, it has the same basic Hodge numbers as the regular Sasakian structure on $M$ we started with, but the basic Hodge numbers of that Sasakian structure are the same as the Hodge numbers of $G/H$. Since $h^{p,q}(G/H)=h^{p,q}(M,\F)=0$ for $p \neq q$, we get
$$
h^{k,k}(M,\F)=\sum_{p+q=2k} h^{p,q}(M,\F) = \sum_{p+q=2k} h^{p,q}(G/H)=b^{2k}(G/H)\;.
$$
By~\cite[Theorem 7.11]{GNT}, the number of closed leaves of the characteristic foliation of the deformed Sasakian structure equals the total basic Betti number $\sum_{k} b^{k}(M,\F)= \sum_{k} b^{k} (G/H)$. Since $H^{\odd}(G/H)=0$, we get $\sum_{k} b^{k} (G/H) = \chi (G/H)$.
\end{proof}

If we now assume that $G$ and $H$ are connected, the Betti numbers of $G/H$ have been calculated by Borel \cite[Theorem~26.1 (c)]{Borel} in terms of the Betti numbers of $G$ and $H$: Let
\begin{align*}
P_{t}(G) & = \prod_{i=1}^{r}(1 + t^{g_{i}})\;, &  P_{t}(H) & = \prod_{i=1}^{r}(1 + t^{l_{i}})
\end{align*}
be the Poincar\'{e} polynomials of $G$ and $H$, where $r=\rank G=\rank H$. Then the Poincar\'{e} polynomial $P_{t}(G/H)$ of $G/H$ is given by
\begin{equation}\label{eq:Pt}
P_{t}(G/H) = \prod_{i=1}^{r}\frac{1 - t^{g_{i}+1}}{1-t^{l_{i}+1}}\;,
\end{equation}
which implies that the Euler number $\chi(G/H)$ of $G/H$ is
\begin{equation}\label{eq:Eul}
\chi(G/H) = \prod_{i=1}^{r}\frac{g_{i}+1}{l_{i}+1}\;.
\end{equation}
By these formulas, we get the following examples.

\begin{ex}
For the case of $G=\SU(n)$ and $H=\S(\U(p) \times \U(n-p))$, we have
\begin{align*}
P_{t}(G) & = \prod_{i \in \{3,5, \ldots, 2n-1\}}(1 + t^{i})\;, & P_{t}(H) & = \prod_{j }(1 + t^{j})\;,
\end{align*}
where $j$ runs in $\{1, 3, 5, \ldots, 2(n-p)-1, 3, 5, \ldots, 2p-1\}$. Then, by Theorem~\ref{thm:Madmits} and Equation~\eqref{eq:Pt}, we get an irregular Sasakian structure on a circle bundle $M$ over $G/H$ whose basic Hodge numbers are determined by the basic Poincar\'e polynomial
\[
P_{t}(M,\F) = \prod_{i=1}^{n-p}\frac{1 - t^{2i+2}}{1-t^{2i}} \cdot \prod_{i=1}^{p-1}\frac{1 - t^{2(n-p+1)+2i}}{1-t^{2i+2}}\;
\]
and, by~\eqref{eq:Eul}, whose number of closed leaves of $\F$ equals $\chi(G/H) = \frac{n!}{(n-p)!p!}$.
\end{ex}

\begin{ex}
For the case of $G=E_{7}$ and $H=E_{6}\cdot \SO(2)$, we have
\begin{align*}
P_{t}(G) & = \prod_{i}(1 + t^{i})\;, & P_{t}(H) & = \prod_{j}(1 + t^{j})\;,
\end{align*}
where $i$ runs in $\{3, 11, 15, 19, 23, 27, 35\}$ and $j$ runs in $\{1, 3, 9, 11, 15, 17, 23\}$. Then, by Theorem~\ref{thm:Madmits} and Equation~\eqref{eq:Pt}, we get an irregular Sasakian structure on a circle bundle $M$ over $G/H$ whose basic Hodge numbers are determined by the basic Poincar\'e polynomial
\[
P_{t}(M,\F) = \frac{\prod_{i \in \{3, 11, 15, 19, 23, 27, 35\}} (1 - t^{i+1})}{\prod_{j \in \{1, 3, 9, 11, 15, 17, 23\}} (1 - t^{j+1})}
\]
and, by~\eqref{eq:Eul}, whose number of closed leaves of $\F$ equals $\chi(G/H) = 56$.
\end{ex}


\begin{thebibliography}{10}

\bibitem[AMN12]{AMN}
J.~Amor\'{o}s, M.~Manjar\'{i}n and M.~Nicolau,
{\em Deformations of K\"{a}hler manifolds with non vanishing holomorphic vector fields}, To appear in J.\ Eur.\ Math.\ Soc.\ (JEMS), available at {\texttt{arXiv:0909.4690}}.

\bibitem[ALK08]{Alvarez Lopez}
J.A.~\'Alvarez L\'opez and Y.A.~Kordyukov,
{\em Lefschetz distributions of Lie foliations},
$C^*$-algebras and elliptic theory II, pp.~1--40, Trends Math., Birkh\"auser, Basel, 2008.

\bibitem[Bel01]{Belgun}
F.A.~Belgun,
{\em Normal CR structures on compact $3$-manifolds},
Math.\ Z.\ {\bf 238} (2001), 441--460.

\bibitem[BW58]{Boothby Wang}
W.M.~Boothby and H.C.~Wang,
{\em On contact manifolds},
Ann.\ of Math.\ (2) {\bf 68} (1958), 721--734.

\bibitem[Bor53]{Borel}
A.~Borel,
{\em Sur la cohomologie des espaces fibr\'es principaux et des espaces homog\`enes de groupes de Lie compacts},
Ann.\ of Math.\ (2) {\bf 57} (1953), 115--207.

\bibitem[BH58]{BH0}
A.~Borel and F.~Hirzebruch,
{\em Characteristic classes and homogeneous spaces. I},
Amer.\ J.\ Math.\ {\bf 80} (1958), 458--538.

\bibitem[BH59]{BH}
\bysame,
{\em Characteristic classes and homogeneous spaces. II},
Amer.\ J.\ Math.\ {\bf 81} (1959), 315--382.

\bibitem[BG00a]{Boyer Galicki 4}
C.P.~Boyer and K.~Galicki,
{\em On Sasakian-Einstein Geometry}
Internat.\ J.\ Math.\ {\bf 11} (2000), 873--909.

\bibitem[BG00b]{Boyer Galicki 2}
\bysame,
{\em A note on toric contact geometry},
J.\ Geom.\ Phys.\ {\bf 35} (2000), 288--298.

\bibitem[BG07a]{Boyer Galicki}
\bysame,
{\em Sasakian Geometry},
Oxford Mathematical Monographs, Oxford University Press, Oxford, 2007.

\bibitem[BG07b]{Boyer Galicki 3}
\bysame,
{\em Sasakian geometry, holonomy, and supersymmetry},
Handbook of pseudo-Riemannian geometry and supersymmetry, pp.\ 39--83, IRMA Lect.\ Math.\ Theor.\ Phys.\, 16, Eur.\ Math.\ Soc., Z\"{u}rich, 2010.

\bibitem[BGK05]{BGK}
C.P.~Boyer, K.~Galicki and J.~Koll\'{a}r,
{\em Einstein metrics on spheres},
Ann.\ of Math.\ (2) {\bf 162} (2005), 557--580.

\bibitem[BGM06]{BGM}
C.P.~Boyer, K.~Galicki, and P.~Matzeu,
{\em On eta-Einstein Sasakian geometry},
Comm.\ Math.\ Phys.\ {\bf 262} (2006), 177--208.

\bibitem[BGN02]{Boyer Galicki Nakamaye2}
C.P.~Boyer, K.~Galicki and M.~Nakamaye,
{\em On the geometry of Sasakian-Einstein $5$-manifolds},
Math.\ Ann.\ {\bf 325} (2003), 485--524.

\bibitem[BGO07]{BGO}
C.P.~Boyer, K.~Galicki and L.~Ornea,
{\em Constructions in Sasakian geometry},
Math.\ Z.\ {\bf 257} (2007), 907--924.

\bibitem[BGS08]{BGS}
C.P.~Boyer, K.~Galicki and S.~Simanca,
{\em Canonical Sasakian metrics},
Commun.\ Math.\ Phys.\ {\bf 279} (2008), 705--733.

\bibitem[CKP07]{CKP}
J.B.~Carrell, K.~Kaveh and V.~Puppe,
{\em Vector fields, torus actions and equivariant cohomology},
Pacific J.\ Math.\ {\bf 232} (2007), 61--76.

\bibitem[CL73]{CarrellLieberman}
J.B.~Carrell and D.I.~Lieberman,
{\em Holomorphic vector fields and Kaehler manifolds},
Invent.\ Math.\ {\bf 21} (1973), 303--309.

\bibitem[CM74]{ChernMoser}
S.S.~Chern and J.K.~Moser,
{\em Real hypersurfaces in complex manifolds},
Acta Math.\ {\bf 133} (1974), 219--271.

\bibitem[vCo09]{van Coevering}
C.~van Coevering,
{\em Some examples of toric Sasaki-Einstein manifolds},
Riemannian topology and geometric structures on manifolds, pp.\ 185--232, Progr.\ Math., 271, Birkh\"{a}user Boston, Boston, MA, 2009.

\bibitem[EKA90]{ElKacimi}
A.~El Kacimi-Alaoui,
{\em Op\'erateurs transversalement elliptiques sur un feuilletage riemannien et applications},
Compositio Math.\ {\bf 73} (1990), 57--106.

\bibitem[EKAH87]{ElKacimiHector}
A.~El Kacimi-Alaoui and G.~Hector,
{\em D\'ecomposition de Hodge basique pour un feuilletage riemannien},
Ann.\ Inst.\ Fourier (Grenoble) {\bf 36} (1996), 207--227.

\bibitem[EKASH85]{ESH}
A.~El Kacimi-Alaoui, V.~Sergiescu, and G.~Hector,
{\em La cohomologie basique d'un feuilletage riemannien est de dimension finie},
Math.\ Z.\ {\bf 188} (1985), 593--599.

\bibitem[Fin10]{MO} Posting by Joel Fine on www.mathoverflow.net, March 24, 2010: http://mathoverflow.net/questions/19210/holomorphic-vector-fields-acting-on-dolbeault-cohomology

\bibitem[FOW09]{FOW}
A.~Futaki, H.~Ono, and G.~Wang,
{\em Transverse K\"{a}hler geometry of Sasaki manifolds and toric Sasaki-Einstein manifolds},
J.\ Differential Geom.\ {\bf 83} (2009), 585--635.

\bibitem[GO98]{GauOr}
P.~Gauduchon and L.~Ornea,
{\it Locally conformally K\"{a}hler metrics on Hopf surfaces},
Ann.\ Inst.\ Fourier (Grenoble) {\bf 48} (1998), 1107--1127.

\bibitem[GMSW04]{GMSW}
J.~Gauntlett, D.~Martelli, J.~Sparks, and D.~Waldram,
{\em Sasaki-Einstein metrics on $S^{2} \times S^{3}$},
Adv.\ Theor.\ Math.\ Phys.\ {\bf 8} (2004), 711--734.

\bibitem[Gei97]{Geiges}
H.~Geiges,
{\em Normal contact structures on $3$-manifolds},
Tohoku Math.\ J.\ (2) {\bf 49} (1997), 415--422.

\bibitem[GNT11]{GNT}
O.~Goertsches, H.~Nozawa and D.~T\"oben, {\em Equivariant cohomology of $K$-contact manifolds}, to appear in Math.\ Ann., available at {\texttt{arXiv:1102.4476}}.

\bibitem[GT09]{GT2009}
{O.~Goertsches and D.~T\"oben},
{\em Torus actions whose equivariant cohomology is Cohen-Macaulay},
J.\ Topology {\bf 3} (2010), 819--846.

\bibitem[GT10]{GT2010}
\bysame,
{\em Equivariant basic cohomology of Riemannian foliations},
Preprint (2010), available at {\texttt{arXiv:1004.1043}}.

\bibitem[Gom11]{Gomez}
R.R.~Gomez,
{\em Lorentzian Sasaki-Einstein metrics on connected sums of $S^{2} \times S^{3}$},
Geom.~Dedicata {\bf 150} (2011), 249--255.

\bibitem[GGK02]{GGK}
V.W.~Guillemin, V.L.~Ginzburg, and Y.~Karshon,
{\it Moment maps, cobordisms, and Hamiltonian group actions},
Mathematical Surveys and Monographs, 96. AMS, Providence, RI, 2002.

\bibitem[KS60]{KS}
K.~Kodaira and D.~Spencer,
{\em On Deformations of Complex Analytic Structures, III. Stability Theorems for Complex Structures},
Ann.\ of Math.\ (2) {\bf 71} (1960), 43--76.

\bibitem[Ler02]{Lerman}
E.~Lerman,
{\em Contact Toric Manifolds},
J.\ Symplectic Geom.\ {\bf 1} (2002), 785--828.

\bibitem[Ler04]{Lerman 3}
\bysame,
{\em Contact fiber bundles},
J.\ Geom.\ Phys.\ {\bf 49} (2004), 52--66.

\bibitem[Lil03]{Lillywhite}
S.~Lillywhite,
{\em Formality in an equivariant setting},
Trans.\ Amer.\ Math.\ Soc.\ {\bf 355} (2003), 2771--2793.

\bibitem[MSY08]{MSY}
D.~Martelli, J.~Sparks and S.-T.~Yau,
{\em Sasaki-Einstein manifolds and volume minimisation},
Comm.\ Math.\ Phys.\ {\em 280} (2008), 611--673.

\bibitem[Mol88]{Molino}
P.~Molino,
{\em Riemannian foliations},
with appendices by G.~Cairns, Y.~Carri\`ere, \'E.~Ghys, E.~Salem and V.~Sergiescu, Birkh\"auser Boston Inc., Boston, 1988.

\bibitem[MS85]{MolSer}
P.~Molino and V.~Sergiescu,
{\it Deux remarques sur les flots riemanniens},
Manuscripta Math.\ {\bf 51} (1985), 145--161.

\bibitem[MS39]{Myers Steenrod}
S.B.~Myers and N.E.~Steenrod,
{\em The group of isometries of a Riemannian manifold},
Ann.\ of Math.\ (2) {\bf 40} (1939), 400--416.

\bibitem[Noz08]{Nozawa}
H.~Nozawa,
{\em Deformation of Sasakian metrics},
Preprint (2011), available at {\texttt{arXiv:0809.4699v5}}.

\bibitem[Noz11]{Nozawa 2}
\bysame,
{\em Continuity of the \'{A}lvarez class under deformations},
to appear in J. Reine Angew. Math., available at \texttt{arXiv:0909.4508}

\bibitem[Sar85]{Saralegui}
M.~Saralegui,
{\em The Euler class for flows of isometries},
Differential geometry (Santiago de Compostela, 84), pp.\ 220--227, Res.~Notes in Math., 131, Pitman, Boston, MA, 1985.

\bibitem[Sch95]{Schoen}
R.~Schoen,
{\em On the conformal and CR automorphism groups},
Geom.\ Funct.\ Anal.\ {\bf 5} (1995), 464--481.

\bibitem[Ser85]{Sergiescu}
V.~Sergiescu,
{\em Cohomologie basique et dualit\'{e} des feuilletages riemanniens},
Ann.\ Inst.\ Fourier (Grenoble) {\bf 35} (1985), 137--158.

\bibitem[Tel00]{Teleman}
C.~Teleman,
{\em The quantization conjecture revisited}, Ann.\ of Math.\ (2) {\bf 152} (2000), 1--43.

\bibitem[Voi02]{Voisin}
C.~Voisin,
{\em Hodge theory and complex algebraic geometry. I},
Translated from the French by Leila Schneps. Reprint of the 2002 English edition. Cambridge Studies in Advanced Mathematics, 76. Cambridge University Press, Cambridge, 2007.

\end{thebibliography}
\end{document}